\title{The Cheapest Ticket Problem in Public Transport\footnote{This work was partially supported by JPI Urban Europe under the project EASIER.\\Parts of the paper are based on a preliminary version presented at ATMOS 2020 \cite{SchUrb-Atmos}.}}
\author[1,2]{Anita Schöbel}
\author[1]{Reena Urban}
\date{}
\affil[1]{Technische Universität Kaiserslautern, Germany}
\affil[2]{Fraunhofer-Institute for Industrial Mathematics ITWM, Germany}
\affil[ ]{\textit {\{schoebel,urban\}@mathematik.uni-kl.de}}
\newcommand{\N}{\mathbb{N}}
\newcommand{\R}{\mathbb{R}}
\newcommand{\W}{\mathcal{W}}
\newcommand{\cZ}{{\mathcal{Z}}}
\newcommand{\defeq}{\mathrel{\vcentcolon=}}
\newcommand{\smax}{S_{\max}}
\newcommand{\lmax}{L_{\max}}
\newcommand{\dmax}{D_{\max}}
\newcommand{\kmax}{k_{\max}}
\newcommand{\zones}{\textrm{zones}}
\theoremstyle{plain}
\newtheorem{theorem}{Theorem}
\newtheorem{lemma}[theorem]{Lemma}
\newtheorem{corollary}[theorem]{Corollary}
\theoremstyle{definition}
\newtheorem{definition}[theorem]{Definition}
\newtheorem{example}[theorem]{Example}
\begin{document}

\maketitle

\begin{abstract}
	Route choice models in public transport have been discussed for a long time. The main factor why a passenger chooses a specific path is usually based on its length or travel time.
	However, also the ticket price that passengers have to pay may influence their decision since passengers prefer cheaper paths over more expensive ones.
	
	In this paper, we deal with the \emph{cheapest ticket problem} which asks for a cheapest ticket to travel between a pair of stations.
	The complexity and the algorithmic approach to solve this problem depend crucially on the underlying \emph{fare structure}, e.g., it is easy if the ticket prices are proportional to the distance traveled (as in distance tariff fare structures), but may become NP-complete in zone tariff fare structures.
	We hence discuss the cheapest ticket problem for different variations of distance- and zone-based fare structures.
	We start by modeling the respective fare structure mathematically, identify its main properties, and finally provide a polynomial algorithm, or prove NP-completeness of the cheapest ticket problem. 
	We also provide general results on the combination of two fare structures, which is often observed in practice.\\
\end{abstract}

\textbf{Keywords:} Public Transport, Fare Structures, Modeling, Cheapest Tickets
	
\section{Introduction}
Fare systems may be very diverse, containing a lot of different rules and regulations.
Among the possible fare strategies is the flat tariff in which all journeys cost the same, no matter how long they are, or kilometer-based distance tariffs which are used by most railway companies all over the world.
Very popular in metropolitan regions are zone tariffs (used in many European cities, but also, e.g., in California) in which the number of zones traversed on a journey determines the ticket price. 
In most regions, these fare strategies come with special regulations: journeys with less than a given number of stations may get a special price, there might be network-wide tickets or stations belonging to more than one zone.
Sometimes different fare strategies are combined.
The underlying fare strategy is usually independent of the way tickets are bought: they can be provided as paper tickets from ticket machines or from online sales, by usage of smart cards in check-in-check-out systems, or by other mobile devices.
Recently, some public transport companies offer the simple usage of a mobile device for charging the beeline tariff between the start coordinates and the end coordinates of the journey. 

The question which we pursue in this paper is how to find the cheapest possibility to travel between two stations if the fare structure, i.e., the fare strategy with prices, is known.
In particular, we want to determine in which cases the cheapest ticket problem is solvable in polynomial time with shortest path techniques.
This question is relevant for several reasons. 
First, the passengers would like to minimize the ticket prices they have to pay as one among other criteria when planning their journeys. 
Second, a public transport company can only estimate its income if the ticket prices are specified. 
Knowing the demand and the cheapest ticket prices, hence, gives a lower bound on the
expected income of the public transport provider.
Third, for designing and improving fare structures, it is necessary to be able to compute (cheapest) ticket prices. 
Last, knowing about the ticket prices and combining them with the expected travel time may help to understand the passengers' behavior better and hence may lead to more realistic passenger assignment models.

In this paper, we discuss the following two properties:
\begin{description}
	\item[No-stopover property:] Passengers cannot save money by splitting a journey into two (or more) parts and buying separate tickets for each of these sub-journeys.
	\item[No-elongation property:] Passengers cannot save money by buying a ticket for a longer journey but only using a part of it.
\end{description}

As will be shown, there is no relation between these two properties. 
They are relevant from a real-world point of view since they ensure that a fare structure is consistent and does not trigger strange actions (e.g., buying a ticket for a longer path than needed) as a legal way of saving money.
In \cite{HannSchnee05}, the authors say that a fare structure without the no-stopover property would be ``impractical and potentially confusing for the customer''.
Still, this property is not always satisfied in real-world fare structures, see \cite{Urban20}. 
The no-elongation property is taken into account in \cite{OttoBoysen17}.\medskip

As already mentioned, the cheapest ticket problem depends crucially on the underlying fare structure. 
Literature on fare structures is scarce compared to papers on timetabling or scheduling in public transport.
Early papers deal with the design of zone tariffs, see \cite{HaScho1,HaSc01,KeBa01,BKP05}.
The topic is still ongoing using different types of objectives, e.g., the income of the public transport company,
as in \cite{BornKarbPfetsch12,GalliMaischSchoen17,OttoBoysen17}.
Also the design of distance tariffs from zone tariffs is studied in \cite{MaadiSchmoecker18}.
The computation of cheapest paths is considered for distance tariffs in a railway context in \cite{HannSchnee05}, while \cite{DellingPajorWerneck} and \cite{DellingDibbeltPajor} compute paths that traverse the smallest number of tariff zones.
\cite{Blanco2016,Blanco2017} deal with cost-minimal paths in the context of flight trajectory optimization with overflight costs.
Other papers, e.g., \cite{Lo2003} and \cite{Maadi2017}, in the area of route choice and multi-modal routing deal with transfers, non-additive link costs and hyperpaths.
Recently, \cite{EulBorn19} have presented the so-called ticket graph which models transitions between tickets via transition functions over partially ordered monoids and allows the design of an algorithm for finding cheapest paths in fare structures which do not have the subpath-optimality property. 
However, the runtime of this approach need not be polynomial.\medskip

The remainder of the paper is structured as follows: 
In Section~\ref{sec-faresystems}, we start by defining what a fare structure is, and we define the no-stopover property and the no-elongation property formally.
We then discuss these properties for different fare structures and develop algorithms and/or complexity results regarding the cheapest ticket problem. 
This is done for distance tariffs, beeline tariffs and flat tariffs in Section~\ref{sec-distance}, for various zone-based fare structures in Section~\ref{section zone}, and for the combination of different fare structures in Section~\ref{chapter combined}.
For a more detailed list, see Table~\ref{Table:overview-fare-structures}.
We conclude in Section~\ref{sec-conclusion}.

\begin{table}
	\caption{Overview of the discussed fare strategies.}
	\label{Table:overview-fare-structures}
	\begin{small}
		\begin{center}
			\begin{tabular}{|ll|}
				\hline
				flat/distance/beeline tariff & Section~\ref{sec-distance}\\ \hline
				basic zone tariff & Section~\ref{section basic zone}\\ \hline
				zone tariff with metropolitan zone & Section~\ref{sec-metropolitan}	\\ \hline
				zone tariff with overlap areas & Section~\ref{sec-overlap}		\\ \hline
				zone tariff with single counting & Section~\ref{sec-single}		\\ \hline
				combined fare structures (general properties) & Section~\ref{sec-combined} \\ \hline
				bounded distance tariff & Section~\ref{sec-bounded} \\ \hline
				basic zone tariff combined with a short-distance tariff & Section~\ref{section-zone-short} \\ \hline	
			\end{tabular}
		\end{center}
	\end{small}
\end{table}

\section{Fare Structures in Public Transport}
\label{sec-faresystems}
In terms of terminology, we refer to \cite[p.\ 13]{Fleishman1996}. 
By \emph{fare strategy} we describe the general type of payment structure, e.g., a flat tariff, a distance tariff or a zone tariff and their particularities.
The \emph{fare structure} is the combination of one or more fare strategies with actual prices.
We consider two types of fare strategies, namely distance-based and zone-based fare strategies, which we define in their dedicated sections.
Besides these fare strategies which are popular in many countries, there are further possibilities not covered here (see, e.g., \cite{Schmoecker2016}).
We first specify mathematically what a fare structure is. 
We are not aware of such a formal definition in the literature. 

Let a \emph{public transport network (PTN)} be given. 
A PTN $(V,E)$ is a graph with a node set $V$ given by a set of stops or stations and an edge set $E$ of direct connections between them.
For simplicity, we assume the PTN to be an undirected graph which is simple and connected. 
A subset of nodes $Z\subseteq V$ is called connected if its induced subgraph $G[Z]$ is connected.
The PTN can be used to model railway, tram, or bus networks. 
In the following, we call the nodes of the PTN \emph{stations} also if bus networks with stops are under consideration.
The price of a journey through a PTN depends not only on the start station and the end station of the journey, but also on the specific path and the tickets that have been chosen.
Here, a path is a finite sequence of (not necessary distinct) nodes and edges.
Since the PTN is simple, a path is uniquely determined by its sequence of nodes.

\begin{definition}
	Let a PTN be given, and let $\W$ be the set of all paths in the PTN.
	A \emph{fare structure} is a function $p:\W \to \R_{\geq 0}$ that assigns a price to every path in the PTN.
\end{definition}

For a path $W=(x_1,\ldots,x_n)$, we denote a subpath $(x_i,\ldots,x_j)$ with $1 \leq i \leq j \leq n$ by $[x_i,x_j]$. 
The price of a subpath is hence given as $p([x_i,x_j])$.
The brackets $[ \cdot ]$ emphasize that $[x_i,x_j]$ describes a path and not only a pair of stations.
Furthermore, let $W_1+W_2$ denote the concatenation of paths $W_1$ and $W_2$.

Next, we specify with which combinations of tickets a passenger can travel (legally) along a given path $W$:
The straightforward way is to pay for the whole path $W\!$. But it may also be possible to pay for a longer path $H_1 \supseteq W$ and use the ticket only for the subpath $W\!$, or to split $W$ into $W_1+W_2$ and use two tickets, namely for $W_1$ and for $W_2$ instead of one ticket for the whole path~$W\!$. 
The general definition is the following.

\begin{definition} \label{def-ticket}
	A finite sequence of paths $T=(H_1,\ldots,H_t)$ with $H_j \in \W$, $j\in \{1,\ldots,t\}$, is a \emph{ticket} of a path $W \in \W$ if there is a partition of $W$ into subpaths $W_1,\ldots, W_t$ such that ${W = W_1+\ldots+W_t}$ and $W_j$ is a subpath of $H_j$, $j \in \{1\ldots, t\}$.
	The \emph{price} of a ticket $T = (H_1,\ldots,H_t)$ is given by ${p(T) \defeq \sum_{j=1}^t p(H_j)}$.
\end{definition}

For a given path $W\!$, the ticket $T=(W)$ is the \emph{standard ticket}, $T=(H_1)$ with $W \subseteq H_1$ is an \emph{elongated ticket} and $T=(W_1,W_2)$ with $W_1+W_2=W$ is a \emph{compound ticket}.

Our goal is to find cheapest tickets and cheapest standard tickets to travel from a station $x$
to another station $y$.
A path with a cheapest standard ticket from $x$ to $y$ is called a \emph{cheapest path} between those stations.

\begin{example} \label{example-ticket}
	\begin{figure}
		\centering
		\includegraphics[scale=1]{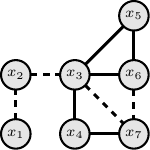}
		\caption{PTN for Example~\ref{example-ticket}.}
		\label{Figure-Example-Ticket}
	\end{figure}
	In order to illustrate the previous definition, we consider the PTN depicted in Figure~\ref{Figure-Example-Ticket}.
	Let $W=(x_1,x_2,x_3,x_7,x_6)$ (dashed) be the path along which we travel.
	Then there are several feasible tickets.
	For example, the standard ticket is given by $T_1=(W)$, the ticket ${T_2=(W+(x_6,x_5))}$ is an elongated ticket, $T_3=((x_1,x_2,x_3),(x_3,x_7,x_6))$ is a compound ticket, and a combination of both is given by $T_4=((x_1,x_2,x_3,x_4), (x_5, x_3,x_7), (x_7,x_6))$, where $W_1=(x_1,x_2,x_3)$, $W_2=(x_3,x_7)$ and $W_3=(x_7,x_6)$.
	The last case could, for example, be relevant if $x_5-x_3-x_7$ is a railway line with an especially cheap ticket.
\end{example}

Note that Definition~\ref{def-ticket} can be further generalized, e.g., by allowing $W_i,W_j \subseteq H_k$
for the same path $H_k$ (which then has to be paid only once) or by switching the order of the $H_k$.
It can be shown that these relaxations can be neglected when searching for cheapest tickets between
two stations as long as the no-elongation property holds.

We now define the no-stopover property and the no-elongation property formally.
Let a fare structure~$p$ on a PTN $(V,E)$ be given.

\begin{definition}
	\label{no-stopover}
	A fare structure $p$ satisfies the no-stopover property if 
	\[ p([x_1,x_n]) \leq p([x_1,x_i])+p([x_i,x_n]) \]
	for all paths $(x_1,\ldots,x_n) \in \W$, $n\geq 3$, and all intermediate stations $x_i$ with $i \in \{2,\ldots,n-1\}$.
\end{definition}

The no-stopover property states that a compound ticket $([x_1,x_i], [x_i,x_n])$ is never preferable to the standard ticket $([x_1,x_n])$, i.e., splitting the ticket or making a stopover does never decrease the ticket price.
Also multiple stopovers of a single path are not favorable, since for several stopovers
at $x_{i_1},\ldots,x_{i_k}$, we have that
\[
\underbrace{\underbrace{p([x_1,x_{i_1}]) + p([x_{i_1},x_{i_2}])}_{\geq p([x_1,x_{i_2}])} +p([x_{i_2},x_{i_3}])}_{\geq([x_1,x_{i_3}])}
+ \cdots + p([x_{i_k},x_n]) \geq p([x_1,x_n]). 
\]

We acknowledge that there might be very special situations in which stopovers are intended.
For example, in order to incentivize shopping at a place along a commuter route, one could offer a cheaper conjunction ticket which is valid in combination with a receipt after shopping.
However, this is complicated to control and does not concern the fundamental fare structure.

\begin{definition}
	A fare structure $p$ satisfies the no-elongation property if ${p([x_1,x_{n-1}])\leq p([x_1,x_n])}$ for all paths $(x_1,\ldots,x_n) \in \W$, $n\geq 2$.
\end{definition}  

The no-elongation property states that, given a path, an elongated ticket is not preferable to a standard ticket, i.e., $p(W)\leq p(H_1)$ for $W\subseteq H_1\in \W$.
This is because for every path $(x_1,\ldots,x_n)$ with a subpath $[x_i,x_j]$, $1\leq i\leq j\leq n$, we have $p([x_1,x_n]) \geq p([x_1,x_{n-1}]) \geq \ldots \geq p([x_1,x_j])$ and ${p([x_1,x_j])\geq p([x_2,x_j])\geq p([x_i,x_j])}$ by considering the reverse paths.

While the absence of the no-stopover and no-elongation property is mostly seen as non-transparent (maybe even unfair), there are a few situations in which these properties are not intended, e.g., to make leaving a train at an airport more costly.
If conventional tickets are used, passengers could avoid paying such an increased price by buying a ticket for a longer journey and leaving the train early, while there is no legal way around in check-in/check-out systems.

In Section~\ref{sec-distance}, we will see that the no-stopover property does not imply the no-elongation property, and Section~\ref{sec-metropolitan} will show that the inverse implication does also not hold.\medskip

Observe that the price of a cheapest ticket can be smaller than the price of a cheapest standard ticket.
However, if the no-stopover and the no-elongation property both hold, both problems are equivalent in the following sense:

\begin{theorem}\label{Prelim-thm}
	If a fare structure satisfies the no-stopover and the no-elongation property, then the standard ticket $T=(W)$ is a cheapest ticket for every path $W \in \W$.
	
	In particular, a cheapest standard ticket between two stations is also a cheapest ticket.
\end{theorem}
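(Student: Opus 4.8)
The plan is to prove the first statement directly: fix a path $W \in \W$ and an arbitrary ticket $T = (H_1,\ldots,H_t)$ of $W$, and show $p(T) \ge p((W))$. By Definition~\ref{def-ticket}, there is a partition $W = W_1 + \cdots + W_t$ with $W_j$ a subpath of $H_j$ for each $j$. I would first invoke the no-elongation property in the form of the monotonicity of $p$ under taking subpaths noted right after its definition: since $W_j$ is a subpath of $H_j$, we get $p(W_j) \le p(H_j)$, and summing over $j$ yields $p(T) = \sum_{j=1}^t p(H_j) \ge \sum_{j=1}^t p(W_j)$.

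Next I would use the no-stopover property to collapse the concatenation. The stations shared by consecutive pieces $W_j$ and $W_{j+1}$ are intermediate stations of $W$, so applying the telescoping inequality already displayed after Definition~\ref{no-stopover} (with these shared stations as the stopover points $x_{i_1},\ldots,x_{i_{t-1}}$) gives $\sum_{j=1}^t p(W_j) \ge p(W)$. Chaining this with the previous bound gives $p(T) \ge p(W) = p((W))$, so the standard ticket $(W)$ is a cheapest ticket for $W$.

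For the ``in particular'' claim, I would note that any ticket used to travel between two stations $x$ and $y$ is, by definition, a ticket of some path $W$ from $x$ to $y$; by the first part its price is at least $p((W))$, and $p((W))$ is at least the price $p((W^*))$ of a standard ticket for a cheapest path $W^*$ from $x$ to $y$. Hence no ticket between $x$ and $y$ is cheaper than the cheapest standard ticket, which is therefore also a cheapest ticket.

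The main technical care needed is in the telescoping step: Definition~\ref{no-stopover} is stated for paths with $n \ge 3$, so I should make sure degenerate pieces (a single-station $W_j$, if the notion of partition allows them) do not cause trouble. This is harmless, since such a piece contributes a nonnegative amount $p(W_j) \ge 0$ and can simply be dropped before applying the telescoping inequality; the remaining nontrivial pieces still concatenate to $W$, and if only one remains it equals $W$ and the bound is trivial. The corner case where $W$ itself is a single station $(x)$ is also immediate: each $H_j$ then contains $x$, so $p(H_j) \ge p((x))$ by no-elongation and hence $p(T) \ge p((x)) = p((W))$.
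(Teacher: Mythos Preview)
Your proof is correct and follows essentially the same approach as the paper's own proof: both apply the no-elongation property to bound $\sum_j p(H_j) \ge \sum_j p(W_j)$ and then use the (iterated) no-stopover property to collapse $\sum_j p(W_j) \ge p(W)$. You add some extra care about degenerate pieces and spell out the ``in particular'' claim, but the argument is the same.
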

\begin{proof}
	Let a fare structure $p$ be given for which the no-stopover and the no-elongation property are satisfied.
	Further, let $T =(H_1,\ldots,H_t)$ be a ticket for the path $W \in \W$.
	We show that the standard ticket of $W$ does not cost more than $T$.
	
	Let $W=W_1 +\ldots + W_t$ be a decomposition such that $W_j$ is a subpath of $H_j$, $j\in \{1,\ldots, n\}$.
	Due to the no-elongation property, we have that $p(W_j)\leq p(H_j)$.
	The standard ticket of $W$ is ${T' = (W) = (W_1 + \ldots + W_t)}$.
	Due to the no-stopover property, it holds that 
	\[p(T') = p(W_1 + \ldots + W_t)\stackrel{\text{no-stopover}}{\leq} p(W_1) + \ldots + p(W_t)
	\stackrel{\text{no-elongation}}{\leq} p(H_1) + \ldots p(H_t)=p(T).\]
	Hence, the standard ticket $T'$ of $W$ is at least as cheap as the ticket $T$.
\end{proof}

In other words, if both the no-elongation and the no-stopover property hold, there always exists a cheapest possibility to travel from $x$ to $y$ which can be realized by a standard ticket, i.e., by a cheapest path $[x,y]$ for which the passenger buys one single ticket with price $p([x,y])$.
Consequently, under the assumptions of the theorem, we only need to consider standard tickets in order to determine a cheapest ticket. 
This will be used later on and simplifies the situation.

\section{Distance-based Fare Structures}
\label{sec-distance}
For distance-based fare structures, the price of a journey depends on the kilometers traveled. 
In  a \emph{distance tariff} the length of the path is used, while in a \emph{beeline tariff} the distance as the crow flies (Euclidean distance) is the basis for the ticket price. 
We use $l(W)$ to denote the length of a path ${W=(x_1,\ldots,x_n)}$ (in kilometers), and $l_2(W)=\|x_n - x_1\|_2$ as its beeline distance.
In order to compute $l(W)$, we assume that each edge in the PTN has assigned its (positive) physical length.
For the beeline distance, we assume that the stations~$V$ of the PTN are embedded in the plane such that the Euclidean distance $l_2$ between every pair of stations can be computed.

\begin{definition}
	Let a PTN be given, and let $\W$ be the set of all paths in the PTN.
	A fare structure~$p$ is a \emph{distance tariff} w.r.t.\ a price function $P \colon \R_{\geq 0} \to \R_{\geq 0}$ if $p(W)=P(l(W))$ for all~${W \in \W}$.
\end{definition}

\begin{definition}
	Let a PTN be given, and let $\W$ be the set of all paths in the PTN.
	A fare structure~$p$ is a \emph{beeline tariff} w.r.t.\ a price function $P \colon \R_{\geq 0} \to \R_{\geq 0}$ if $p(W)=P(l_2(W))$ for all~${W \in \W}$.
\end{definition}

A common case is an \emph{affine} price function ${P \colon \R_{\geq 0} \to \R_{\geq 0}}$, $k \mapsto P(k)\defeq f + \bar{p} \cdot k$ for some base amount $f \geq 0$ and a price per kilometer $\bar{p} \geq 0$.
Note that in case of a constant price function~$P$ (i.e., if $P$ is affine with $\bar{p}=0$), both the distance tariff and the beeline tariff become a \emph{flat tariff} (also called \emph{unit tariff}), in which all paths cost the same.
Most railway systems rely on distance tariffs (or modifications). 
Beeline tariffs are rather new and often used for mobile tickets on mobile phones which track the journey of a passenger by using her GPS coordinates and determining the price based on the beeline distance after the journey is over.\medskip

We start by analyzing the distance tariff.
\begin{theorem} \label{distance-eig1}
	Let a price function $P$ be given.
	All distance tariffs w.r.t.\ $P$ satisfy the no-stopover property if and only if $P$ is subadditive on $\R_{> 0}$.
\end{theorem}
\begin{proof}
Consider a path $W \in \W$ with a corresponding compound ticket $(W_1,W_2)$.
We have $l(W)=l(W_1)+l(W_2)$.
If $P$ is subadditive, i.e., $P(a+b)\leq P(a)+P(b)$ for all $a,b \in \R_{> 0}$, it holds that $p(W)=P(l(W)) \leq P(l(W_1))+ P(l(W_2))=p(W_1)+p(W_2)$.

Now assume that $P$ is not subadditive, i.e., there are $a,b \in \R_{> 0}$, with $P(a+b) >P(a)+ P(b)$.
Consider a line graph with three stations $x_1, x_2, x_3$ such that $l(x_1,x_2)=a$, $l(x_2,x_3)=b$.
Then $p((x_1,x_2,x_3))=P(a+b)>P(a)+P(b)=p((x_1,x_2))+p((x_2,x_3))$ and the no-stopover property is not satisfied.
\end{proof}

\begin{theorem}\label{distance-eig2}
	Let a price function $P$ be given.
	All distance tariffs w.r.t.\ $P$ satisfy the no-elongation property if and only if $P$ is increasing. 
\end{theorem}
\begin{proof}
Note that for a path $W=(x_1,\ldots,x_n)\in \W$, $n\geq 2$, we have that $l([x_1,x_{n-1}])\leq l(W)$. Hence, if $P$ is increasing, then $p([x_1,x_{n-1}]) = P(l([x_1,x_{n-1}])) \leq P(l(W)) = p(W)$.

Now assume that $P$ is not increasing, i.e., there are $a,b \in \R_{\geq 0}$, $a<b$, with $P(a) > P(b)$.
Consider a line graph with three stations $x_1, x_2, x_3$ such that $l(x_1,x_2)=a$, $l(x_2,x_3)=b-a$ (if $a=0$, let $x_1=x_2$).
Then $p((x_1,x_2))=P(a)>P(b)=p((x_1,x_2,x_3))$, and the no-elongation property is not satisfied.
\end{proof}

For a distance tariff, we have for any pair of paths $W_1,W_2 \in \W$ that $l(W_1) \leq l(W_2)$ is equivalent to ${p(W_1) \leq p(W_2)}$ if the price function is increasing.
Hence, a shortest path is always a cheapest path in this case, and vice versa. 
Therefore, we can use any shortest path algorithm in order to compute a cheapest path.
By Theorems~\ref{Prelim-thm}, \ref{distance-eig1} and~\ref{distance-eig2}, cheapest standard tickets are also cheapest tickets in a distance tariff if the price function is subadditive and increasing.
In particular, this is also true for flat tariffs and distance tariffs with an affine price function.

\begin{corollary}\label{distance-cor}
		Let $p$ be a distance tariff with an increasing price function $P\!$.
		\begin{itemize}
			\item A cheapest path and hence a cheapest standard ticket can be computed in polynomial time.
			\item If $P$ is subadditive on $\R_{>0}$, then a cheapest ticket can be computed in polynomial time.
		\end{itemize}
\end{corollary}

For the beeline tariff, we use the Euclidean distance to determine the price of a ticket.
This means that the ticket price is only dependent on the location of the start and end station, but not on the specific path to travel between them. 
Consequently, \emph{all} paths between two stations $x$ and~$y$ are cheapest paths and hence can be found in polynomial time, e.g., by breadth-first search.

\begin{lemma}
	For a beeline tariff, a cheapest standard ticket can be computed in polynomial time.
\end{lemma}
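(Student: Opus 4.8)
The plan is to exploit the fact that in a beeline tariff the price $p(W)=f+\bar p\cdot l_2(W)$ of a path $W=(x_1,\ldots,x_n)$ depends only on its endpoints, since $l_2(W)=\|x_n-x_1\|$. So, fixing the two stations $x$ and $y$ between which we search, I would first observe that for \emph{every} path $W$ from $x$ to $y$ in the PTN the standard ticket $T=(W)$ has price $p(T)=p(W)=f+\bar p\cdot\|y-x\|$, i.e.\ the same value independently of the chosen path. Consequently, every path from $x$ to $y$ already yields a cheapest standard ticket, and the cheapest standard ticket price between $x$ and $y$ equals $f+\bar p\cdot\|y-x\|$.

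Next I would argue that such a path exists and can be produced efficiently: since the PTN $(V,E)$ is assumed to be connected, there is at least one path from $x$ to $y$, and one can be found in time polynomial in $|V|+|E|$, e.g.\ by breadth-first search from $x$. Outputting the standard ticket $(W)$ for this path, together with its price $f+\bar p\cdot\|y-x\|$ (computable in constant time from the coordinates of $x$ and $y$), gives the claimed polynomial algorithm.

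There is essentially no hard step here. The only point worth noting is that the lemma is deliberately restricted to standard tickets: a beeline tariff does satisfy the no-stopover property (by the triangle inequality $\|x_i-x_1\|+\|x_n-x_i\|\ge\|x_n-x_1\|$), but it does \emph{not} satisfy the no-elongation property, since a detour endpoint $x_n$ may lie closer to $x_1$ than $x_{n-1}$ does. Hence Theorem~\ref{Prelim-thm} does not apply, and cheapest tickets in the sense of Definition~\ref{def-ticket} (compound and elongated tickets allowed) have to be treated separately; the present statement concerns only the cheapest \emph{standard} ticket, for which the argument above suffices.
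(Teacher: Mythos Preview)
Your argument is correct and essentially identical to the paper's: since the beeline price depends only on the endpoints, every $x$-$y$-path is cheapest, and one can be found in polynomial time via breadth-first search in the connected PTN. Your closing remark about the no-elongation property failing (so that the lemma is deliberately restricted to standard tickets) also matches the paper's surrounding discussion.
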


In general, a beeline tariff does not satisfy the no-elongation property.
For the no-stopover property, we need a stronger assumption, namely $P(a) \leq P(b) + P(c)$ for all $a,b,c\in \R_{\geq 0}$, $a \leq b+c$.
This holds if $P$ is subadditive and increasing.

\begin{theorem}
	Let a price function $P$ be given.	
	All beeline tariffs w.r.t.\ $P$ satisfy the no-stopover property if and only if $P(a) \leq P(b) + P(c)$ for all $a,b,c\in \R_{> 0}$ with $a \leq b+c$, and $P(0)\leq 2 P(d)$ for all $d\in \R_{> 0}$.
\end{theorem}

\begin{proof}
	First, we assume that there are $a,b,c \in \R_{> 0}$, $a\leq b+c$, with $P(a)>P(b)+P(c)$.
	Consider a complete graph with three stations $x_1,x_2,x_3$ such that $l_2(x_1,x_3)=a$, $l_2(x_1,x_2)=b$ and ${l_2(x_2,x_3)=c}$.
	Then $p((x_1,x_2,x_3))=P(a)>P(b)+P(c)=p((x_1,x_2))+p((x_2,x_3)) $ and the no-stopover property is not satisfied.
	If there is some $d\in \R_{>0}$ with $P(0)>2P(d)$, then consider the path $W=(x_1,x_2,x_1)$ with $l_2(x_1,x_2)=d$.
	The no-stopover property is not satisfied because ${p(W)=P(0)>2P(d)=p((x_1,x_2))+p((x_2,x_1))}$.
	
	Now, we suppose that the inequalities hold and consider a path $W \in \W$ with a corresponding compound ticket $(W_1,W_2)$.
	If $l_2(W_1) =0$ (w.l.o.g.), then $l_2(W)= l_2(W_2)=a$ and ${P(a)\leq P(0)+P(a)}$.
	If $l_2(W)=0$, then $d\defeq l_2(W_1) = l_2(W_2)\in \R_{\geq 0}$ and therefore $p(W)=P(0)\leq 2P(d) = p(W_1)+p(W_2)$.
	Now let all distances be strictly positive.
	We then have $l_2(W)\leq l_2(W_1)+l_2(W_2)$ and $p(W)=P(l_2(W)) \leq P(l_2(W_1))+ P(l_2(W_2))=p(W_1)+p(W_2)$.
\end{proof}

However, the no-elongation property is \emph{not} satisfied for the beeline tariff in general as the following small example demonstrates.

\begin{example}
	\begin{figure}[t]
		\centering
		\includegraphics[width=0.18\textwidth]{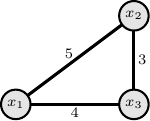}
		\caption{PTN in which the no-elongation property is not satisfied for the beeline tariff.}
		\label{Figure Luftlinie Lemma Eig2}
	\end{figure}
	Consider a beeline tariff with an affine price function $P(k)=f+\bar{p}\cdot k$ with $\bar{p}>0$ regarding the PTN depicted in Figure~\ref{Figure Luftlinie Lemma Eig2} and the path $W=(x_1,x_2)$. 
	The standard ticket $T=(W)$ costs $f+5 \cdot\bar{p}$, which is more than the price $f + 4 \cdot \bar{p}$ of the elongated ticket $T'=((x_1,x_2,x_3))$.
	Hence, the no-elongation property is not fulfilled, even if going back to the start station is not allowed.
\end{example}

In our example, passengers would save money by buying the elongated ticket $T'\,$, but leaving the bus already at station $x_2$.
This is avoided in practice, since passengers are tracked by their mobile devices and hence need to checkout at a station which is really visited.\medskip

We remark that instead of the Euclidean distance also other metrics can be used, see \cite{Urban20}.

\section{Zone-based Fare Structures} \label{section zone}
\emph{Zone tariffs} combine the ideas of flat and distance tariffs. 
The whole region is divided into \emph{tariff zones} and the length of a journey is approximated by the number of traversed zones.
In a counting zones pricing (which we consider), for all journeys traversing the same number of zones, the same price is charged.
A flat tariff is applied within a zone.
We start by analyzing the basic zone tariff and then extend our analysis to some particularities: metropolitan zones, overlap areas and single counting of zones (even if they are traversed multiple times).
The modeling is a little different for each of these particularities.

\subsection{Basic Zone Tariff} \label{section basic zone}

In order to model a basic zone tariff, we use the PTN. 
The geographical zones imply a zone partition $\cZ=\{Z_1,\ldots,Z_K\}$ of the set of stations $V\!$, i.e., $V=\bigcup_{i\in\{1,\ldots,K\}} Z_i$ and the $Z_i$ are pairwise disjoint.
It can happen that an edge $e$ between two stations crosses a zone without having a station in it.
We call such a situation an \emph{empty zone on edge $e$}. 
In this case, we add a virtual node which is not an actual station on edge $e$.
In other words, we assume that the PTN $(V,E)$ does not have empty zones on the edges, but some of the
nodes in $V$ might be virtual nodes.
For each node $v \in V\!$, let $\zones(v)\subseteq \cZ$ denote the zones to which node~$v$ belongs. 
In the basic zone tariff, $\zones(v)$ is a singleton, i.e., $\vert \zones(v) \vert = 1$.
This zone assignment yields the \emph{zone border weight}
\[b(e)=b(x,y) \defeq \begin{cases} 0 &\text{if } x \text{ and } y \text{ are in the same zone, i.e., } \zones(x) = \zones(y),\\ 1 & \text{ otherwise}\end{cases}\]
for all edges $e=\{x,y\} \in E$.
From that, we can derive for a path $W \in \W$
the \emph{zone function}
\begin{equation}	
	z(W) \defeq 1 + b(W), \mbox{ where } b(W) \defeq \sum_{e\in E(W)} b(e). \label{eq-z}
\end{equation}
This determines the number of zones which are traversed by the path $W$, where we count a zone multiple times, once for each time that it is traversed.
We illustrate the way to count zones in Example~\ref{zone Bsp zone system}.

\begin{example}\label{zone Bsp zone system}
	\begin{figure}[tbp]
		\centering
		\begin{subfigure}[c]{0.3\textwidth}
			\centering
			\includegraphics[scale=1]{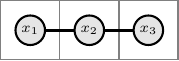}
			\caption{}
			\label{Subfig: 002}
		\end{subfigure}
		\begin{subfigure}[c]{0.3\textwidth}
			\centering
			\includegraphics[scale=1]{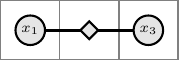}
			\caption{}
			\label{Subfig: 003}
		\end{subfigure}
		\begin{subfigure}[c]{0.3\textwidth}
			\centering
			\includegraphics[scale=1]{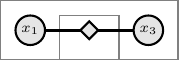}
			\caption{}
			\label{Subfig: 004}
		\end{subfigure}
		\caption{PTNs with zones for Example~\ref{zone Bsp zone system}.}
		\label{Figure Zonen Def}
	\end{figure}
	
	Figure~\ref{Figure Zonen Def} shows three different PTNs with zone partitions. 
	In Figures~\ref{Subfig: 003} and \ref{Subfig: 004}, there is an empty zone on edge $\{x_1,x_3\}$.
	Hence, we add a virtual node represented by the diamond-shaped node.
	In all three PTNs, the respective path $W$ from $x_1$ to $x_3$ crosses two zone borders, hence $z(W)=3$.
	Note that this is even true for the PTN in Figure~\ref{Subfig: 004} although ${\zones(x_1)=\zones(x_3)}$.
\end{example}

Another approach to model empty zones on edges instead of virtual nodes are zone border weights larger than~1.
This approach can be found in \cite{SchUrb-Atmos, Urban20}. 

The results that we derive in the following are the same for both modeling approaches. We give sufficient conditions for a fare structure to satisfy the no-stopover and no-elongation property. 
However, for a given basic zone tariff, it may make a difference to analyze the original PTN which may include empty zones or the one with virtual nodes for the no-stopover and no-elongation property if it is not forbidden to choose a virtual node as start or end node or to split a ticket there.\medskip

We now have the preliminaries to define a basic zone tariff.

\begin{definition}
	Let a PTN together with a zone partition $\cZ$ be given, and let $\W$ be the set of all paths in the PTN.
	A fare structure $p$ is a \emph{basic zone tariff} w.r.t.\ a price function $P \colon \N_{\geq 1} \rightarrow \R_{\geq 0}$ if $p(W)=P(z(W))$ for all $W \in \W$ where the zone function $z$ is defined as in~\eqref{eq-z}.
\end{definition}

The price function $P$ assigns a ticket price to every number of traversed zones.
If it is constant, the basic zone tariff simplifies to a flat tariff. 
A basic zone tariff can be interpreted as a distance tariff that measures the length of the path by the number of traversed zones.
As we will see, the properties of a basic zone tariff depend crucially on the price function $P$ and the results are similar to those for distance tariffs.

\begin{theorem}\label{zone1}
	Let a price function $P$ be given.
	All basic zone tariffs w.r.t.\ $P$ satisfy the no-stopover property if and only if
	\begin{equation}\label{eq-nostopzone}
		P(k) \leq P(i) + P(k-i+1) \mbox{ for all } k \in \N_{\geq 1},\, i \in \{1,\ldots, k\}.
	\end{equation}
	In particular, if all basic zone tariffs w.r.t.\ $P$ satisfy the no-stopover property, then the increase of the price function is bounded by $P(k) \leq (k-1)P(2)$ for $k \geq 2$.
\end{theorem}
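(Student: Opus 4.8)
The plan is to prove the two directions of the "if and only if" separately, and then derive the bound $P(k) \le (k-1)P(2)$ as a quick corollary of the forward direction.

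For the direction "condition~\eqref{eq-nostopzone} implies the no-stopover property for every basic zone tariff w.r.t.\ $P$", I would start from an arbitrary path $W = (x_1,\ldots,x_n)$ and an intermediate station $x_i$. The key observation is that the zone function is additive along the split up to an overlap of one: if $k = z([x_1,x_n])$, and we write $z([x_1,x_i]) = i'$ and $z([x_i,x_n]) = k - i' + 1$, this follows from $b([x_1,x_n]) = b([x_1,x_i]) + b([x_i,x_n])$ (the border weight is a sum over edges, and the edge sets partition) together with the definition $z(W) = 1 + b(W)$, which "double counts" the station $x_i$. Plugging into $p = P \circ z$ and applying~\eqref{eq-nostopzone} with this $k$ and $i'$ gives $p([x_1,x_n]) = P(k) \le P(i') + P(k-i'+1) = p([x_1,x_i]) + p([x_i,x_n])$, which is exactly the no-stopover inequality.

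For the converse, "if all basic zone tariffs w.r.t.\ $P$ satisfy the no-stopover property, then~\eqref{eq-nostopzone} holds", I would argue by constructing, for each fixed $k$ and $i \in \{1,\ldots,k\}$, a specific PTN with a zone partition realizing a path whose prefix visits exactly $i$ zones and whose suffix visits exactly $k-i+1$ zones, so that the whole path visits exactly $k$ zones. The natural choice is a path graph $(x_1,\ldots,x_k)$ with each consecutive pair in distinct zones (so $z([x_1,x_k]) = k$), split at $x_i$: then $z([x_1,x_i]) = i$ and $z([x_i,x_k]) = k-i+1$. Since the no-stopover property must hold in particular for this PTN and this path, we get $P(k) \le P(i) + P(k-i+1)$. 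The mild subtlety to address is that $z$ only ever takes values that are actually realizable as zone counts of some path — but since every value in $\N_{\ge 1}$ is realizable (by a suitable path in a suitable PTN), the quantification over all $k$ in~\eqref{eq-nostopzone} is exactly matched.

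Finally, for the bound: assuming~\eqref{eq-nostopzone}, induct on $k$. The base case $k=2$ is trivial. For the step, apply~\eqref{eq-nostopzone} with the split $i = k-1$, giving $P(k) \le P(k-1) + P(2) \le (k-2)P(2) + P(2) = (k-1)P(2)$ by the induction hypothesis. The main obstacle, such as it is, lies in the converse direction: one must be careful to phrase the construction so that the split point $x_i$ is a genuine intermediate station (requiring $i \in \{2,\ldots,k-1\}$ in Definition~\ref{no-stopover}), and separately handle the boundary cases $i=1$ and $i=k$ in~\eqref{eq-nostopzone}, which hold trivially since $P \ge 0$ and $P(1) \le P(1)$ forces nothing — indeed for $i=1$ the inequality reads $P(k) \le P(1) + P(k)$ and for $i=k$ it reads $P(k) \le P(k) + P(1)$, both automatic. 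So the real content of the converse is only the range $i \in \{2,\ldots,k-1\}$, which the path construction covers exactly.
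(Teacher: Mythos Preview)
Your proposal is correct and follows essentially the same approach as the paper: the same path-graph construction for the converse (with the same handling of the trivial boundary cases $i\in\{1,k\}$), the same additivity observation $z(W_1)+z(W_2)=z(W)+1$ for the forward direction, and the same induction for the bound (the paper takes $i=2$ where you take $i=k-1$, which yields the identical recursion $P(k)\le P(k-1)+P(2)$).
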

\begin{proof}
	\begin{figure}
		\centering
		\includegraphics[scale=1]{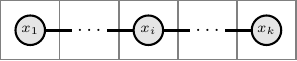}
		\caption{PTN with zones for Theorem~\ref{zone1}.}
		\label{Figure zone Thm Eig1}
	\end{figure}
	First, we assume that \eqref{eq-nostopzone} does not hold for some $k$ and $i$. 
	Note that $i \notin \{1,k\}$ since ${P(k)\leq P(1)+P(k)}$ is always true.
	We consider the basic zone tariff w.r.t.\ $P$ for the PTN depicted in Figure~\ref{Figure zone Thm Eig1}.
	In the induced basic zone tariff, the standard ticket of the path $(x_1,\ldots,x_k)$ costs~$P(k)$, whereas the compound ticket $([x_1,x_i],[x_i,x_k])$ costs $P(i)+P(k-i+1)$, which is cheaper by assumption.
	Hence, the no-stopover property is not satisfied.	
	
	Now, we suppose that \eqref{eq-nostopzone} holds.
	Let $p$ be any basic zone tariff w.r.t.~$P\!$.
	For a path ${W \in \W}$, we define $k \defeq z(W)$ and let $(W_1,W_2)$ be a corresponding compound ticket.
	It holds that ${z(W_1)+z(W_2)=k+1}$, i.e., ${z(W_2) = k-z(W_1)+1}$.
	By assumption it holds 
	\[p(W) = P(k) \leq P(z(W_1)) + P(k-z(W_1)+1) = p(W_1) + p(W_2).\]
	Thus, the no-stopover property is satisfied.
	\medskip
	
	For the second part of the theorem, let the no-stopover property be satisfied for all basic zone tariffs w.r.t.\ $P\!$. 
	Due to the first part of this proof, we know that \eqref{eq-nostopzone} holds, in particular for $i=2$, i.e., we have $P(k) \leq P(2) + P(k-1)$ for all $k\geq 2$.
	We prove the claim by induction over $k$.
	For $k=2$, the inequality is clearly fulfilled.
	For $k\geq 3$, we have
	\[P(k) \leq P(2) + P(k-1) \leq P(2) + (k-2)P(2) = (k-1)P(2).\]
	This proves the claim.
\end{proof}

Theorem~\ref{zone1} has an interesting interpretation, mathematically and from a transport point of view. 
Define the \emph{zone border price function} $\tilde{P} \colon \N_{\geq 1} \to \R_{\geq 0},\ k \mapsto P(k+1)$ as the function which maps the number of crossed zone borders to the price of the path. 
Then we have the following corollary.

\begin{corollary}
	All basic zone tariffs w.r.t.\ $P$ satisfy the no-stopover property if and only if
	the zone border price function $\tilde{P}$ is subadditive.
\end{corollary}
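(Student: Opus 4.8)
The plan is to reduce the statement to Theorem~\ref{zone1} and then verify that condition~\eqref{eq-nostopzone} on $P$ is literally the subadditivity of $\tilde P$ after a shift of indices. By Theorem~\ref{zone1}, all basic zone tariffs w.r.t.\ $P$ satisfy the no-stopover property if and only if
\[
P(k) \leq P(i) + P(k-i+1) \quad \text{for all } k \in \N_{\geq 1},\ i \in \{1,\ldots,k\}.
\]
So it suffices to show that this family of inequalities is equivalent to $\tilde P(m+n) \leq \tilde P(m) + \tilde P(n)$ for all $m,n \in \N_{\geq 1}$, where $\tilde P(k) = P(k+1)$.

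First I would dispose of the boundary cases. For $i = 1$ we have $P(k) \leq P(1) + P(k)$, and for $i = k$ we have $P(k) \leq P(k) + P(1)$; both hold automatically because $P$ takes values in $\R_{\geq 0}$. Hence~\eqref{eq-nostopzone} is equivalent to its restriction to $k \geq 3$ and $i \in \{2,\ldots,k-1\}$.

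Next I would perform the substitution. Given $k \geq 3$ and $i \in \{2,\ldots,k-1\}$, set $m \defeq i-1 \geq 1$ and $n \defeq k-i \geq 1$; then $m+n+1 = k$, $m+1 = i$, and $n+1 = k-i+1$, so the inequality $P(k) \leq P(i) + P(k-i+1)$ reads $P(m+n+1) \leq P(m+1) + P(n+1)$, i.e.\ $\tilde P(m+n) \leq \tilde P(m) + \tilde P(n)$. Conversely, given $m,n \geq 1$, set $k \defeq m+n+1 \geq 3$ and $i \defeq m+1 \in \{2,\ldots,k-1\}$, which recovers the corresponding instance of~\eqref{eq-nostopzone}. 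Since $(m,n) \mapsto (k,i) = (m+n+1, m+1)$ is a bijection between $\{(m,n) : m,n \geq 1\}$ and $\{(k,i) : k \geq 3,\ i \in \{2,\ldots,k-1\}\}$, the two families of inequalities coincide, which together with Theorem~\ref{zone1} and the boundary-case observation yields the claim.

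I do not expect a genuine obstacle here; the only thing to be careful about is the off-by-one bookkeeping in the index shift and making explicit that the trivial instances $i \in \{1,k\}$ of~\eqref{eq-nostopzone}, which have no counterpart among the subadditivity inequalities, are harmless because $P \geq 0$.
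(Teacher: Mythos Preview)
Your proposal is correct and follows essentially the same route as the paper: both reduce to Theorem~\ref{zone1} and then identify condition~\eqref{eq-nostopzone} with subadditivity of $\tilde P$ via the index shift $\tilde P(k)=P(k+1)$. Your version is simply more explicit about the boundary cases $i\in\{1,k\}$ and about the bijection between index sets, whereas the paper compresses all of this into a single chain of equivalences.
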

\begin{proof}
	The function $\tilde{P}$ is subadditive if and only if $\tilde{P}(i+k) \leq \tilde{P}(i) + \tilde{P}(k)$ for all $i,k \in \N_{\geq 1}$ which is the case if and only if $P(i+k-1) \leq P(i) + P(k)$ for all $i,k \in \N_{\geq 1}$ which in turn is equivalent to~\eqref{eq-nostopzone}.
\end{proof}

We also remark that \eqref{eq-nostopzone} holds if and only if the condition is satisfied for all $k \in \N_{\geq 3}$ and ${i \in \{2,\ldots, \lfloor \frac{k+1}{2} \rfloor \}}$, hence the number of cases to be checked can be decreased.

If the basic amount for using public transport is high enough in comparison to the additional price for traversing various zones, the no-stopover property is satisfied as the following lemma shows:

\begin{lemma}
	Let $p$ be a basic zone tariff w.r.t.\ a price function $P\!$.
	If for all $k \in \N_{\geq 1}$ it holds that $P(1)\leq P(k) \leq 2 P(1)$, i.e., in particular we have $P(1)\geq \frac{1}{2} \sup_{k\in \N_{\geq 1}} P(k)$, then the no-stopover property is satisfied.
\end{lemma}
\begin{proof}
	Consider a path $W\in \W$ with a corresponding compound ticket $(W_1,W_2)$.
	Then we have $p(W_1)+p(W_2)=P(z(W_1))+P(z(W_2))\geq 2 P(1) \geq P(z(W))=p(W)$, where the first inequality follows from the assumption that $P(k)\geq P(1)$ for all $k\in \N_{\geq 1}$.
\end{proof}

\begin{example} \label{zone Ex no-stopover}
	We provide some examples:
	\begin{itemize}
		\item In the unrealistic case that a price function $P$ is decreasing, every basic zone tariff w.r.t.\ $P$ satisfies the no-stopover property.
		\item For general increasing price functions, the no-stopover property need not be satisfied. 
		An example is a basic zone tariff in which a path passes through three consecutive zones and in which the zone prices are $P(1)=1, P(2)=2$ and $P(3)=5$.
		\item However, if the price function $P$ is affine and increasing, i.e., if $P(k) = f+ \bar{p} \cdot k$ with $\bar{p} \geq 0$ and~$f\geq - \overline{p}$, then every basic zone tariff w.r.t.\ $P$ satisfies the no-stopover property.
		This is a realistic choice of prices for a zone tariff.
	\end{itemize}
\end{example}

For the no-elongation property, there is the following criterion.

\begin{theorem}\label{zone-elongation}
	Let a price function $P$ be given.
	All basic zone tariffs w.r.t.\ $P$ satisfy the no-elongation property if and only if $P$ is increasing.
\end{theorem}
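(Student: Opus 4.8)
The plan is to prove both directions of the equivalence, treating it in close analogy with the proof of Theorem~\ref{zone1}.

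First I would prove the ``if'' direction. Suppose $P$ is increasing. Let $p$ be any basic zone tariff w.r.t.\ $P$, and consider a path $(x_1,\ldots,x_n) \in \W$ with $n \geq 2$. The key observation is that dropping the last edge of a path can only decrease (or leave unchanged) the number of visited zones: by definition~\eqref{eq-z} we have $z([x_1,x_{n-1}]) = 1 + b([x_1,x_{n-1}])$ and $z([x_1,x_n]) = 1 + b([x_1,x_{n-1}]) + b(\{x_{n-1},x_n\})$, and since $b(\{x_{n-1},x_n\}) \in \{0,1\} \geq 0$, we get $z([x_1,x_{n-1}]) \leq z([x_1,x_n])$. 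Because $P$ is increasing, this yields $p([x_1,x_{n-1}]) = P(z([x_1,x_{n-1}])) \leq P(z([x_1,x_n])) = p([x_1,x_n])$, which is exactly the no-elongation property.

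Next I would prove the ``only if'' direction by contraposition, in the same style as the first part of the proof of Theorem~\ref{zone1}. Suppose $P$ is not increasing, i.e., there exist $k \geq 1$ with $P(k) > P(k+1)$. I would exhibit a small PTN --- a path $(x_1,\ldots,x_{k+1})$ in which consecutive stations lie in distinct zones, so that $z([x_1,x_j]) = j$ for every $j$ --- analogous to Figure~\ref{Figure zone Thm Eig1}. In the induced basic zone tariff, the path $W = (x_1,\ldots,x_{k+1})$ has $p(W) = P(k+1)$, while its subpath $[x_1,x_k]$ has price $P(k) > P(k+1)$; taking the elongated ticket $T' = (W)$ for the path $[x_1,x_k]$ is strictly cheaper than the standard ticket, so the no-elongation property fails for this zone tariff, hence not for all basic zone tariffs w.r.t.\ $P$.

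I do not anticipate a serious obstacle here; the argument is routine once one notes that every value of $z$ is realizable by an appropriate PTN (a ``zone-alternating'' path), which is the same construction already used for Theorem~\ref{zone1}. The only mild care needed is the precise reading of ``increasing'': the statement should be understood as non-strict monotonicity ($P(k) \leq P(k+1)$ for all $k$), since the no-elongation inequality in the relevant definition is itself non-strict; with that reading both implications go through cleanly. One could also remark, as the paper tends to, that it suffices to check the condition $P(k) \leq P(k+1)$ for all $k \in \N_{\geq 1}$ rather than comparing all pairs, but this is immediate from transitivity.
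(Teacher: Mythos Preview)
Your proposal is correct and follows essentially the same approach as the paper's own proof: both directions use the observation $z([x_1,x_{n-1}]) \leq z([x_1,x_n])$ together with monotonicity of $P$ for the ``if'' part, and construct a zone-alternating path realizing consecutive values of $z$ to witness a violation when $P$ fails to be increasing. The paper's argument is slightly terser (it does not spell out the edge-weight computation and indexes the counterexample as $P(k)<P(k-1)$ rather than $P(k)>P(k+1)$), but the content is identical.
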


\begin{proof}
	Let $p$ be a basic zone tariff with respect to an increasing price function $P\!$.
	Note that for a path ${W=(x_1,\ldots,x_n) \in \W}$, $n\geq 2$, we have that $z([x_1,x_{n-1}]) \leq z(W)$. 
	Since $P$ is increasing, we obtain that $p([x_1,x_{n-1}])=P(z([x_1,x_{n-1}])) \leq  P(z(W))=p(W)$.
	
	If $P$ is not increasing, there is some $k \in \N_{\geq 2}$ such that $P(k)<P(k-1)$.
	Consider a basic zone tariff w.r.t.~$P$ in which there is a path $(x_1,\ldots,x_k)$ with $z([x_1,x_k])=k$ and $z([x_1,x_{k-1}])=k-1$.
	Then we have $p([x_1,x_k]) = P(k) < P(k-1) = p([x_1,x_{k-1}])$, and the no-elongation property is not satisfied in this basic zone tariff.
\end{proof}

We now turn our attention to cheapest (standard) tickets. 
First, note that cheapest paths need not exist in the case of a price function which contains a strictly decreasing sequence of zone prices $P(i_k)>P(i_{k+1})$, $(i_k)_{k \in N} \subseteq \N_{\geq 1}$, see \cite{SchUrb-Atmos}.
If the price function $P(k)$ becomes constant for $k \geq K$, then cheapest paths always exist.
Still, there might be cheapest standard tickets with large detours compared to a shortest path. All these situations are avoided if the price function is increasing.
However, even then, a cheapest path need not be unique and there might be two cheapest paths traversing different numbers of zones.

\begin{lemma}\label{zone2}
	Let $p$ be a basic zone tariff with an increasing price function $P\!$, and let $W\!$ be a path between $x,y \in V\!$.
	\begin{itemize}
		\item If $W$ traverses a minimum number of zones, it is a cheapest path from $x$ to $y$.
		\item If $P$ is strictly increasing, then $W$ is a cheapest path from $x$ to $y$ if and only if $W$ traverses a minimum number of zones.
	\end{itemize} 
	In both cases, the corresponding cheapest standard ticket $T=(W)$ is also a cheapest ticket if $p$ satisfies the no-stopover property.
\end{lemma}
\begin{proof}
	The first part is clear due to the monotonicity of $P\!$. 
	For the second part, we have ${P(k) < P(k+1)}$ for all $k \geq 1$.
	Hence, an $x$-$y$-path is cheapest if and only if it traverses a minimum number of zones.
	Lastly, the price function is increasing, i.e., the no-elongation property holds by Theorem~\ref{zone-elongation}. 
	If also the no-stopover property is satisfied, then a cheapest standard ticket is a cheapest ticket by Theorem~\ref{Prelim-thm}.
\end{proof}

In the case of an increasing price function, we can hence compute a cheapest path by shortest path techniques in the PTN with zone border weights as described in Algorithm~\ref{Alg zone}. 
\begin{algorithm}
	\caption{Basic zone tariff: finding a cheapest path.}
	\label{Alg zone}
	\SetKwInOut{Input}{Input}
	\SetKwInOut{Output}{Output}
	
	\Input{PTN $(V,E)$, two stations $x,y \in V$}
	\Output{$x$-$y$-path $W$}
	Compute a shortest $x$-$y$-path $W$ in the PTN by applying a shortest path algorithm using the zone border weight $b(e)$ as edge weight for $e \in E$.\\
	\Return $W$
\end{algorithm}

\begin{corollary} \label{zone-cor-alg}
	Let $p$ be a basic zone tariff with an increasing price function.
	\begin{itemize}
		\item Algorithm~\ref{Alg zone} computes a cheapest path and hence a cheapest standard ticket in polynomial time.
		\item If \eqref{eq-nostopzone} holds, Algorithm~\ref{Alg zone} yields a cheapest ticket in polynomial time.
	\end{itemize}
\end{corollary}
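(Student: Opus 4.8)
The plan is to read off both claims from Theorem~\ref{zone2}, once we observe that Algorithm~\ref{Alg zone} produces exactly the object that theorem needs and that it does so in polynomial time.

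For the first bullet, I would start from~\eqref{eq-z}: since $z(W)=1+b(W)$ with $b(W)=\sum_{e\in E(W)}b(e)$ and all zone border weights $b(e)$ are nonnegative integers, an $x$-$y$-path minimises $z$ over all $x$-$y$-paths precisely when it minimises the total weight $\sum_{e\in E(W)}b(e)$. Hence the path $W$ returned by Algorithm~\ref{Alg zone} visits a minimum number of zones, and this minimum is attained by a simple path because inserting a cycle cannot decrease a sum of nonnegative weights. Since $P$ is increasing, the first part of Theorem~\ref{zone2} then yields that $W$ is a cheapest path from $x$ to $y$, so the standard ticket $T=(W)$ is a cheapest standard ticket. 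For the running time, I would note that the PTN on which the shortest-path computation is run — including the virtual nodes introduced for empty zones — still has only polynomially many nodes and edges (at most one virtual node per zone crossed by an edge, and none at all in the weighted-border model), so a standard shortest-path algorithm for nonnegative weights (Dijkstra, or a $0$-$1$ BFS when $b(e)\in\{0,1\}$) runs in polynomial time; correctness is unaffected by zero-weight edges, since a shortest walk and a shortest path coincide when there are no negative weights.

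For the second bullet, I would invoke Theorem~\ref{zone1}: if~\eqref{eq-nostopzone} holds, then every basic zone tariff w.r.t.\ $P$, and in particular $p$, satisfies the no-stopover property. Combined with the fact that $P$ is increasing — which gives the no-elongation property by Theorem~\ref{zone-elongation} — this puts us in the setting of Theorem~\ref{Prelim-thm}, so the cheapest standard ticket $T=(W)$ found in the first bullet is also a cheapest ticket; this is exactly the final sentence of Theorem~\ref{zone2}. The running time is the same as before.

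I do not expect a genuine obstacle here: both statements are immediate corollaries of results already established, and the only point that needs a line of care is the polynomial-time claim, namely checking that passing to the PTN with virtual nodes enlarges the instance only polynomially and that the weights $b(e)$ are nonnegative, so that an off-the-shelf shortest-path routine is applicable and correct.
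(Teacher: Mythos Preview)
Your proposal is correct and follows essentially the same approach as the paper: both bullets are read off from Theorems~\ref{zone1} and~\ref{zone2} together with the polynomial running time of a standard shortest-path algorithm such as Dijkstra. Your write-up is simply more detailed than the paper's one-line proof, spelling out why minimising $b(W)$ is the same as minimising $z(W)$ and why the instance with virtual nodes stays polynomial.
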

\begin{proof}
	The claim follows from Theorems~\ref{zone1} and~\ref{zone2} and from the fact that shortest path algorithms, e.g., Dijkstra, run in polynomial time.
\end{proof}

We remark that the number of nodes and edges in the graph in Algorithm~\ref{Alg zone} can be decreased by contracting edges with zone border weight $b(e)=0$ yielding the so-called \emph{zone graph}, see \cite{HaSc01,Urban20}.

\subsection{Zone Tariff with Metropolitan Zone}
\label{sec-metropolitan}
Many zone tariffs include particularities. 
A common one is the definition of metropolitan zones in which a subset of zones $\cZ_M \subseteq \cZ$ is combined to a common zone $Z_M = \bigcup_{Z \in \cZ_M} Z$, the \emph{metropolitan zone}. 
For journeys which cross the metropolitan zone or start or end there, the zones are counted as in the basic zone tariff.
For journeys within the metropolitan zone, a special price is fixed. 
A higher price might be charged if the metropolitan zone has a well-developed public transport network or is much larger than a usual zone.
A lower price might be chosen in order to make public transport more attractive, e.g., in city regions to reduce the car traffic.

Again, we assume that there are no empty zones on edges, otherwise we add virtual nodes in the same way as in Section~\ref{section basic zone}.
We say that a path $W=(x_1,\ldots,x_n)\in \W $ is \emph{included in the metropolitan zone $Z_M$} if $x_i \in Z_M$ for all $i\in \{1,\ldots,n\}$.

The formal definition of this fare structure is as follows:

\begin{definition}
	Let a PTN with a zone partition $\cZ$ and a metropolitan zone $Z_M$ be given, and let $\W$ be the set of all paths in the PTN.
	A fare structure~$p$ is a \emph{zone tariff with metropolitan zone}~$Z_M$, a price function $P \colon \N_{\geq 1} \rightarrow \R_{\geq 0}$ and a metropolitan price $P_M \in \R_{\geq 0}$  if we have for every path $W \in \W$ that
	\[ p(W) = \begin{cases}
	P_M \qquad & \text{if $W$ is included in the metropolitan zone $Z_M$,}\\
	P(z(W)) \qquad & \text{otherwise (where the zone function $z$ is defined as in~\eqref{eq-z})}.
	\end{cases}\]
\end{definition}

Note that zone tariffs with several metropolitan zones are also possible (and can be defined as above). 
Paths traveling through a metropolitan zone may also be treated in other ways, e.g., the metropolitan zone always counts as two zones, see \cite{Urban20}.

In order to simplify our analysis, we make the following assumptions:
\begin{itemize}
	\item the price function  $P \colon \N_{\geq 1} \rightarrow \R_{\geq 0}$ is increasing,
	\item the underlying basic zone tariff satisfies the no-stopover property,
	\item the metropolitan zone contains at least one node and is connected.
\end{itemize}
The third assumption comes without loss of generality because a disconnected metropolitan zone can be split into its connected components to obtain the same setting just with several connected metropolitan zones instead of one.

We first provide an example that the no-stopover property need not be satisfied for zone tariffs with metropolitan zones although it is satisfied for the underlying basic zone tariffs.

\begin{example}
	\label{ZM2 Ex Eig1}
	\begin{figure}[tbp]
	\centering
	\includegraphics[scale=1]{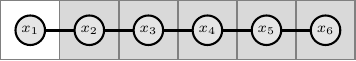}
	\caption{PTN with zones for Example~\ref{ZM2 Ex Eig1}.}
	\label{Figure ZM2 Ex Eig1}	
	\end{figure}
	Consider the PTN depicted in Figure~\ref{Figure ZM2 Ex Eig1}.
	The zones highlighted in gray form a metropolitan zone.
	Let $P \colon \N_{\geq 1} \rightarrow \R_{\geq 0},\ k \mapsto k$ be a linear price function, i.e., it satisfies the no-stopover property, see Example~\ref{zone Ex no-stopover}.
	Let $P_M \defeq 2 = P(2)$.
	The path $(x_1,x_2,x_3,x_4,x_5,x_6)$ costs $p([x_1,x_6])=P(6)=6$ with a standard ticket, but only ${p([x_1,x_2])+p([x_2,x_6]) = P(2) + P_M =4}$ with the compound ticket $((x_1,x_2),(x_2,\ldots,x_6))$, which benefits from the metropolitan zone.
\end{example}

Note that the situation described above occurs in the real world, e.g., in the fare structure of Verkehrsverbund Rhein-Neckar, a German public transport operator, see \cite{Urban20}.
In order to analyze in which cases the no-stopover property nevertheless holds, we define the \emph{maximum metropolitan zone distance} $\dmax$ by                                     
\[ \dmax \defeq \max_{x,y \in Z_M} \ \ \min_{x\text{-}y\text{-}\text{paths } W \text{ included in } Z_M} z(W). \]
The value $\dmax$ is the maximum number of zones that a shortest path which is completely contained in the metropolitan zone may traverse. It depends on the PTN (including the metropolitan zone) and is always finite due to the assumption that $Z_M$ contains at least one node and is connected. 
We remark that a shortest path may traverse the same zone twice, hence $\dmax$ can be larger than the number of zones belonging to the metropolitan zone. 
This is illustrated in Figure~\ref{Figure ZM2 Bem Eig1 2}, where three zones belong to a metropolitan zone.
A shortest path from $x$ to $y$ passes through the upper right zone twice, hence it passes four zones and we obtain that $\dmax=4>3$. 
Further, we assume that every passenger who travels within the metropolitan zone $Z_M$ uses a path with a minimum number of zones.
This yields $z(W)\leq \dmax$ for every path $W$ included in $Z_M$.
With this notation we state the following result:

\begin{figure}[tbp]
	\centering  
	\includegraphics[scale=1]{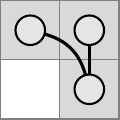}	
	\caption{In this PTN, $\dmax$ is larger than the number of zones that belong to the metropolitan zone.}
	\label{Figure ZM2 Bem Eig1 2}
\end{figure}

\begin{theorem}\label{zone3}
	Let an increasing price function $P\!$, a metropolitan price $P_M$, and an integer ${d\in \N_{\geq 1}}$ be given.
	All zone tariffs with metropolitan zone w.r.t.\ $P$ and $P_M$ on a PTN with $D_{\max} = d$ satisfy the no-stopover property if and only if 	$ P(d+k) \leq P_M + P(k+1)$ for all $k \in  \N_{\geq 1}$.
\end{theorem}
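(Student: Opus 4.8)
The plan is to prove both directions by a case distinction over whether the traveled path $W$ and its two pieces lie inside the metropolitan zone $Z_M$; throughout, I use the standing assumptions that $P$ is increasing and that the underlying basic zone tariff satisfies the no-stopover property (equivalently, by Theorem~\ref{zone1}, that $P$ satisfies \eqref{eq-nostopzone}), together with the additivity $z(W_1+W_2)=z(W_1)+z(W_2)-1$.

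For the implication $\Leftarrow$, assume $P(d+k)\le P_M+P(k+1)$ for all $k\ge 1$, let the PTN have $\dmax=d$, and let $W=(x_1,\ldots,x_n)$ be split at an interior station $x_i$ into $W_1=[x_1,x_i]$ and $W_2=[x_i,x_n]$. If $W\subseteq Z_M$, then $W_1,W_2\subseteq Z_M$ and $p(W)=P_M\le 2P_M=p(W_1)+p(W_2)$. If $W\not\subseteq Z_M$ and neither $W_1$ nor $W_2$ lies in $Z_M$, then all three prices have the form $P(z(\cdot))$ and the desired inequality $p(W)\le p(W_1)+p(W_2)$ is precisely the no-stopover inequality of the underlying basic zone tariff, which holds by assumption. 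If $W\not\subseteq Z_M$ but exactly one of $W_1,W_2$ lies in $Z_M$ --- both cannot, as that would force $W\subseteq Z_M$ --- we may assume $W_2\subseteq Z_M$ by reversing the path, since prices are reversal invariant. Then $p(W_2)=P_M$ and $z(W_2)\le d$ by the minimum-zone assumption for journeys inside $Z_M$, while $z(W_1)\ge 2$, because $z(W_1)=1$ would place every node of $W_1$ in the single zone of $x_i\in Z_M$ and hence give $W_1\subseteq Z_M$. Putting $k:=z(W_1)-1\ge 1$, we get $z(W)=z(W_1)+z(W_2)-1\le d+k$, so monotonicity of $P$ and the hypothesis yield $p(W)=P(z(W))\le P(d+k)\le P_M+P(k+1)=p(W_1)+p(W_2)$.

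For the implication $\Rightarrow$, I argue by contraposition. If $d=1$, the claimed inequality is $P(k+1)\le P_M+P(k+1)$ and holds trivially, so assume $d\ge 2$ and that $P(d+k)>P_M+P(k+1)$ for some $k\ge 1$. I construct a PTN with $\dmax=d$ that is a path on stations $x_1,\ldots,x_{k+d}$ such that $x_1,\ldots,x_k$ lie in $k$ distinct zones outside $Z_M$, the stations $x_{k+1},\ldots,x_{k+d}$ lie in metropolitan sub-zones $Z_{M,1},\ldots,Z_{M,d}$ arranged in a line, and consecutive stations always lie in different zones. The subgraph induced on $Z_M$ is then a path on $d$ sub-zones, so $\dmax=d$. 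For $W=(x_1,\ldots,x_{k+d})$ one has $W\not\subseteq Z_M$ and $z(W)=d+k$, hence $p(W)=P(d+k)$, whereas splitting at $x_{k+1}$ gives $p([x_1,x_{k+1}])=P(k+1)$ and, since $[x_{k+1},x_{k+d}]\subseteq Z_M$, $p([x_{k+1},x_{k+d}])=P_M$. By the assumption on $k$, this compound ticket is strictly cheaper than the standard ticket, so the no-stopover property fails on this PTN.

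The step I expect to be the main obstacle is the third case of the implication $\Leftarrow$: one has to recognize that a subpath lying in $Z_M$ contributes only the bounded price $P_M$ while its zone count stays at most $\dmax$ precisely because of the minimum-zone assumption, and one has to rule out $z(W_1)=1$ so that the hypothesis applies with $k=z(W_1)-1\ge 1$. The remaining cases, the reversal symmetry, and the gadget used for $\Rightarrow$ are then routine.
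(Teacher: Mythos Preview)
Your proof is correct and follows essentially the same approach as the paper's proof: the same case split for $\Leftarrow$ (path inside $Z_M$; both pieces outside $Z_M$, handled by the basic zone tariff; exactly one piece inside $Z_M$, handled via $z(\cdot)\le \dmax$ and the hypothesis), and the same linear gadget for $\Rightarrow$. You are in fact slightly more careful than the paper in two places: you explicitly justify $z(W_1)\ge 2$ so that the hypothesis applies with $k=z(W_1)-1\ge 1$, and you separate the vacuous case $d=1$ in the contrapositive direction; the paper leaves both points implicit.
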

\begin{proof}
	First, assume that there is some $k$ such that $P(d +k) > P_M + P(k+1)$.
	Consider the PTN depicted in Figure~\ref{Figure ZM2 Lemma 2 Eig1}, where $\dmax=d$.
	In the induced zone tariff with metropolitan zone, the standard ticket of the path $(x_1,\ldots,x_{d+k})$, which costs $P(d+k)$, is more expensive than the compound ticket $([x_1,x_d],[x_d,x_{d+k}])$, which costs $P_M+P(k+1)$.
	\begin{figure}[t]
		\centering
		\includegraphics[scale=1]{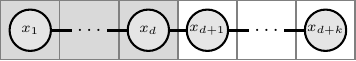}
		\caption{PTN with zones for Theorem~\ref{zone3}.}
		\label{Figure ZM2 Lemma 2 Eig1}	
	\end{figure}
	
	Conversely, we suppose that the inequalities are satisfied.
	Let a PTN with $\dmax= d$ be given. We show that the induced zone tariff with metropolitan zone satisfies the no-stopover property.
	By our assumptions, the no-stopover property is fulfilled for the basic zone tariff.
	Furthermore, it is satisfied for paths included in $Z_M$.
	Hence, we consider paths $W \in \W$ which are not included in $Z_M$, but allow to apply the metropolitan price for a subpath by making one stopover.
	Such a path must start or end in $Z_M$. 
	Let $W$ consist of the subpaths $W_1$ and $W_2$ where $W_1$ is included in $Z_M$ without loss of generality.
	We have $z(W_1)\leq \dmax=d$.
	Hence, it holds
	\begin{equation*}
	\begin{aligned}
		p(W) 	&= P(z(W)) = P(z(W_1) + z(W_2) -1) \stackrel{P \text{ incr.}}{\leq} P(d + z(W_2)-1)\\ 
		&\leq P_M + P(z(W_2)) = p(W_1)+p(W_2)
	\end{aligned}
	\end{equation*}
	and the no-stopover property is satisfied.
\end{proof}

\begin{theorem}\label{theo-elon-metro} 
	Let an increasing price function $P$ and a price $P_M$ be given.
	Then all zone tariffs with metropolitan zone w.r.t.\ $P$ and $P_M$ satisfy the no-elongation property if and only if $P_M \leq P(2)$.
\end{theorem}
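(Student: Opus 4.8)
The plan is to handle both directions of the equivalence by a single case distinction, according to whether the path $[x_1,x_n]$ and its one-step shortening $[x_1,x_{n-1}]$ are included in the metropolitan zone $Z_M$. The observation I would use throughout is that if $[x_1,x_n]$ is included in $Z_M$, then so is $[x_1,x_{n-1}]$, since being included in $Z_M$ only means that all nodes of the path lie in $Z_M$. Hence for a path $W=(x_1,\ldots,x_n)\in\W$ with $n\geq 2$ exactly three cases can occur: both $[x_1,x_{n-1}]$ and $[x_1,x_n]$ are included in $Z_M$; neither is; or $[x_1,x_{n-1}]$ is included in $Z_M$ but $[x_1,x_n]$ is not.

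For the implication ``$P_M\leq P(2)$ $\Rightarrow$ no-elongation'', I would fix an arbitrary PTN with zone partition and metropolitan zone obeying the standing assumptions of this subsection, let $p$ be the induced zone tariff with metropolitan zone, take any such $W$, and go through the three cases. If both prefixes are included in $Z_M$, then $p([x_1,x_{n-1}])=P_M=p([x_1,x_n])$. If neither is, then $p([x_1,x_{n-1}])=P(z([x_1,x_{n-1}]))\leq P(z([x_1,x_n]))=p([x_1,x_n])$, because deleting the last edge cannot increase the number of crossed zone borders and $P$ is increasing. In the remaining case $x_n\notin Z_M$, so $x_{n-1}$ and $x_n$ lie in different zones, $b(x_{n-1},x_n)=1$, hence $z([x_1,x_n])=z([x_1,x_{n-1}])+1\geq 2$, and therefore $p([x_1,x_n])=P(z([x_1,x_n]))\geq P(2)\geq P_M=p([x_1,x_{n-1}])$ by the monotonicity of $P$ and the hypothesis. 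In every case $p([x_1,x_{n-1}])\leq p([x_1,x_n])$.

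For the converse I would argue by contraposition: assuming $P_M>P(2)$, I would exhibit one small zone tariff with metropolitan zone that violates no-elongation while still meeting all standing assumptions. Take the PTN that is the path $x_1-x_2-x_3$, assign $x_1,x_2$ to one zone $Z_1$ and $x_3$ to a second zone $Z_2$, and let $Z_M=Z_1$; this metropolitan zone is connected and contains a station. On this PTN the underlying basic zone tariff satisfies the no-stopover property: the only nontrivial instance of~\eqref{eq-nostopzone} that can arise here is $P(2)\leq P(1)+P(2)$, which holds because $P\geq 0$, so together with $P$ increasing and $Z_M$ connected all standing assumptions hold. Now $[x_1,x_2]$ is included in $Z_M$ and thus costs $P_M$, whereas $[x_1,x_2,x_3]$ is not included in $Z_M$ and has $z([x_1,x_2,x_3])=1+b(x_1,x_2)+b(x_2,x_3)=1+0+1=2$, so it costs $P(2)<P_M=p([x_1,x_2])$. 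Hence the no-elongation property fails for the path $(x_1,x_2,x_3)$.

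I do not expect a serious obstacle; the step that needs attention is the third case of the first implication, where the point is that elongating a $Z_M$-internal path by a node outside $Z_M$ necessarily crosses a zone border and so raises the zone count to at least $2$ — this is exactly why $P(2)$ is the correct threshold. A secondary point is to make sure the tiny counterexample in the converse really respects the subsection's standing assumptions, in particular the no-stopover property of its underlying basic zone tariff, which is why I would include the explicit check $P(2)\leq P(1)+P(2)$.
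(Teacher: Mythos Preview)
Your proposal is correct and follows essentially the same approach as the paper: the same three-case distinction for the forward direction, and for the converse a path of the form $[x_1,x_{n-1}]\subseteq Z_M$ with $x_n\notin Z_M$ visiting exactly two zones. The only difference is cosmetic: the paper folds the converse into the second bullet of the case distinction without explicitly writing down a PTN, whereas you spell out the three-node counterexample and verify the standing assumptions (in particular the no-stopover property of the underlying basic zone tariff on that PTN), which is arguably more careful.
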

\begin{proof}
	Let $p$ be a zone tariff with metropolitan zone $Z_M$ w.r.t.\ $P$ and $P_M$, and let ${W = (x_1,\ldots,x_n) \in \W}$, $n\geq 2$.
	We distinguish three cases.
	\begin{itemize}
		\item If $W$ is included in $Z_M$, then $p([x_1,x_{n-1}]) = P_M = p(W)$.
		\item If $[x_1,x_{n-1}]$ is included in $Z_M$, but $W$ is not, then $W$ traverses at least two zones and ${p([x_1,x_{n-1}]) = P_M \leq P(2) \leq p(W)}$ by assumption.
		On the other hand, if $P_M > P(2)$ and $W$ traverses exactly two zones, we obtain $p([x_1,x_{n-1}]) = P_M > P(2) = p(W)$ and the no-elongation property does not hold.
		\item If $[x_1,x_{n-1}]$ is not included in $Z_M$, then the prices of $W$ and its subpath~$[x_1,x_{n-1}]$ are computed as in the basic zone tariff. Hence, we have $p([x_1,x_{n-1}])\leq p(W)$ by monotonicity of $P$ and Theorem~\ref{zone-elongation}. \qedhere
	\end{itemize}
\end{proof}

We can make use of the following lemma to find a cheapest path. 

\begin{lemma}\label{ZM2 Lemma Eig3}
	Let $p$ be a zone tariff with metropolitan zone with an increasing price function $P$ and $P_M \leq P(3)$, and let $W$ be a path between $x,y \in V\!$.
	\begin{itemize}
		\item If $W$ is included in $P_M$, then it is a cheapest path from $x$ to $y$.
		\item If there does not exist an $x$-$y$-path included in $Z_M$, then $W$ is a cheapest path from $x$ to $y$ if~$W$ traverses a minimum number of zones.
		\item If $P$ is strictly increasing and there does not exist an $x$-$y$-path included in $Z_M$, then $W$ is a cheapest path from $x$ to $y$ if and only if $W$ traverses a minimum number of zones.
	\end{itemize}
	In all cases, the corresponding cheapest standard ticket $T=(W)$ is a cheapest ticket if ${P_M \leq P(2)}$ and $p$ satisfies the no-stopover property.
\end{lemma}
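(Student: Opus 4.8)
The plan is to treat the three bullet points separately, each time comparing $p(W)$ against the price $p(W')$ of an arbitrary $x$-$y$-path $W'$, and then to read off the statement about tickets from Theorems~\ref{Prelim-thm} and~\ref{theo-elon-metro}.

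For the first bullet, observe that if $W$ is included in $Z_M$ then $x,y \in Z_M$ and $p(W) = P_M$. Let $W'$ be any $x$-$y$-path. If $W'$ is also included in $Z_M$, then $p(W') = P_M = p(W)$. If $W'$ is not included in $Z_M$, then some node of $W'$ lies in a zone not belonging to $\cZ_M$, so, traversing $W'$ from $x \in Z_M$, the path must leave $Z_M$ and (since $y \in Z_M$) return to it; this forces at least two zone-border crossings, hence $b(W') \geq 2$ and $z(W') = 1 + b(W') \geq 3$. Using that $P$ is increasing and $P_M \leq P(3)$, we get $p(W') = P(z(W')) \geq P(3) \geq P_M = p(W)$. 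Thus $W$ is a cheapest $x$-$y$-path.

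For the second and third bullets, the hypothesis that no $x$-$y$-path is included in $Z_M$ means that the metropolitan price is never charged, so $p(W') = P(z(W'))$ for every $x$-$y$-path $W'$. Monotonicity of $P$ then shows that any path with a minimum number of visited zones is cheapest, which is the second bullet; if $P$ is in addition strictly increasing, then a path with a non-minimal zone count is strictly more expensive, so being cheapest is equivalent to visiting a minimum number of zones (as in Theorem~\ref{zone2}), which is the third bullet.

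For the final claim, in each of the three cases $W$ is a cheapest $x$-$y$-path, so the standard ticket $T=(W)$ is a cheapest standard ticket. If $P_M \leq P(2)$, the no-elongation property holds by Theorem~\ref{theo-elon-metro} (recall that $P$ is increasing), and together with the assumed no-stopover property, Theorem~\ref{Prelim-thm} yields that the standard ticket $(W)$ is in fact a cheapest ticket. The main obstacle is the inequality $z(W') \geq 3$ in the first bullet: one has to argue carefully that leaving and re-entering the (connected, union-of-zones) set $Z_M$ contributes at least $2$ to the zone-border count $b(W')$; the remaining steps are routine invocations of the monotonicity of $P$ and of the two cited theorems.
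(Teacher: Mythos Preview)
Your proof is correct and follows essentially the same approach as the paper: for the first bullet you show that any $x$-$y$-path leaving $Z_M$ has $z(W')\geq 3$ and hence price at least $P(3)\geq P_M$; for the second and third bullets you reduce to the basic zone tariff and invoke Theorem~\ref{zone2}; and for the final claim you combine Theorem~\ref{theo-elon-metro} with Theorem~\ref{Prelim-thm}. The only difference is that you spell out the zone-border count argument for $z(W')\geq 3$ explicitly, whereas the paper simply asserts that such a path visits at least three zones.
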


\begin{proof}
	Consider the first case in which $W$ is an $x$-$y$-path which is included in the metropolitan zone.
	It costs $p(W) = P_M$. 
	Any $x$-$y$-path that leaves the metropolitan zone traverses at least three zones and thus costs at least $P(3)\geq P_M$.
	Hence, $W$ is a cheapest path.
	If there is no path within the metropolitan zone, the price of a path is computed as in the basic zone tariff and Theorem~\ref{zone2} can be applied.
	
	If $P_M \leq P(2)$, then the no-elongation property is satisfied by Theorem~\ref{theo-elon-metro}.
	If also the no-stopover property holds, then a cheapest standard ticket is a cheapest ticket by Theorem~\ref{Prelim-thm}. 
\end{proof}

We conclude that we can compute a cheapest path in polynomial time in this case by Algorithm~\ref{Alg ZM2}.

\begin{algorithm}
	\SetKwInOut{Input}{Input}
	\SetKwInOut{Output}{Output}
	
	\Input{PTN $(V,E)$, two stations $x,y \in V$}
	\Output{$x$-$y$-path $W$}
	For each edge $e=\{v,w\}\in E$, define the metropolitan weight $z_M(e)$ by
	\[z_M(e) \defeq z_M(v,w) \defeq \begin{cases}
	0 &\text{if } v,w\in Z_M,\\
	1 &\text{otherwise,}
	\end{cases}\]
	and compute a shortest $x$-$y$-path in the PTN with $z_M(e)$ as edge weights.\\
	\eIf{$\sum_{e\in E(W)} z_M(e) =0$}{
	\Return $W$}{
	Apply Algorithm~\ref{Alg zone} for finding a cheapest path regarding the basic zone tariff.}
	\caption{Zone tariff with metropolitan zone: finding a cheapest path.}
	\label{Alg ZM2}
\end{algorithm}

\begin{corollary}
	Let $p$ be a zone tariff with metropolitan zone with an increasing price function~$P\!$.
	\begin{itemize}
		\item If $P_M \leq P(3)$, Algorithm~\ref{Alg ZM2} computes a cheapest path and hence a cheapest standard ticket in polynomial time.
		\item If $P_M \leq P(2)$ and $p$ satisfies the no-stopover property, then Algorithm~\ref{Alg ZM2} yields a cheapest ticket in polynomial time.
	\end{itemize}
\end{corollary}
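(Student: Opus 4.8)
The plan is to obtain the corollary as a direct consequence of Lemma~\ref{ZM2 Lemma Eig3}, combined with the standard fact that shortest-path computations run in polynomial time (Dijkstra, or even breadth-first search, since the metropolitan weights $z_M$ only take values in $\{0,1\}$) and with Corollary~\ref{zone-cor-alg}, which guarantees that Algorithm~\ref{Alg zone} is polynomial. So the whole argument reduces to matching the two branches of Algorithm~\ref{Alg ZM2} to the case distinction of Lemma~\ref{ZM2 Lemma Eig3}.

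First I would establish the bridge between the algorithmic test and the lemma's hypotheses: a path $W$ satisfies $\sum_{e\in E(W)} z_M(e) = 0$ if and only if every edge of $W$ has both endpoints in $Z_M$, i.e., if and only if $W$ is included in $Z_M$. Hence the shortest $x$-$y$-path with respect to $z_M$ has weight $0$ exactly when there exists an $x$-$y$-path included in $Z_M$, and has positive weight exactly when no such path exists. This is precisely the dichotomy appearing in Lemma~\ref{ZM2 Lemma Eig3}.

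For the first bullet, assume $P_M \le P(3)$. If Algorithm~\ref{Alg ZM2} returns $W$ in the first branch, then $W$ is an $x$-$y$-path included in $Z_M$, so by the first item of Lemma~\ref{ZM2 Lemma Eig3} it is a cheapest path. If the algorithm enters the second branch, no $x$-$y$-path is included in $Z_M$, and Algorithm~\ref{Alg zone} returns a path visiting a minimum number of zones (Corollary~\ref{zone-cor-alg}), which is a cheapest path by the second item of Lemma~\ref{ZM2 Lemma Eig3}. In both branches the output is a cheapest path, hence a cheapest standard ticket $T=(W)$, computed in polynomial time. For the second bullet, assume additionally $P_M \le P(2)$ and the no-stopover property; since $P$ is increasing we get $P_M \le P(2)\le P(3)$, so the first bullet already produces a cheapest path $W$ in polynomial time, and by the concluding statement of Lemma~\ref{ZM2 Lemma Eig3} the standard ticket $T=(W)$ is then also a cheapest ticket.

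I expect the only slightly subtle point to be the equivalence ``$z_M$-weight zero $\iff$ path included in $Z_M$'', which is the link that lets us invoke the right case of Lemma~\ref{ZM2 Lemma Eig3}; beyond that, everything is a direct citation of results already proved, so there is no genuine obstacle.
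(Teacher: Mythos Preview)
Your proposal is correct and follows essentially the same approach as the paper: establish the equivalence between $\sum_{e\in E(W)} z_M(e)=0$ and $W$ being included in $Z_M$, then invoke Lemma~\ref{ZM2 Lemma Eig3} and Corollary~\ref{zone-cor-alg} for correctness and polynomial runtime. Your write-up is simply more explicit about matching the two algorithmic branches to the case distinction in the lemma, but the underlying argument is identical.
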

\begin{proof}
	The metropolitan weight $z_M$ is chosen such that a path $W$ is included in the metropolitan zone if and only if $\sum_{e\in E(W)} z_M(e) =0$.
	Hence, the correctness follows from Lemma~\ref{ZM2 Lemma Eig3} and Corollary~\ref{zone-cor-alg}.
	The runtime is polynomial due to Corollary~\ref{zone-cor-alg} and because shortest path algorithms, e.g., Dijkstra, run in polynomial time.
\end{proof}

The case $P_M > P(3)$ remains: Here,
a cheapest $x$-$y$-path might leave the metropolitan zone
$Z_M$ although a path included in $Z_M$ exists, e.g., if there exists a path from $x$ to $y$ which leaves the metropolitan zone and traverses three zones in total.
Such a path can be determined in polynomial time by solving the single-source shortest path (SSSP) problem twice, namely from $x$ and from $y$ to all other nodes, then iterating over all edges leaving the metropolitan zone, i.e., edges between a node in $Z_M$ and a node not in $Z_M$, and complementing them with shortest paths to $x$ and $y$ in order to gain an $x$-$y$-path which leaves the metropolitan zone, and then choosing the smallest of these paths.
By comparing the price of this path with a path included in $Z_M$, we determine a cheapest path.
However, in this case, the no-elongation property does not hold and an elongated ticket might be cheaper than the cheapest standard ticket.
\medskip

We finally remark that due to Theorem~\ref{theo-elon-metro} already for $P_M > P(2)$ the no-elongation property is usually not satisfied. 
It is hence possible that a cheapest ticket between $x,y \in Z_M$ is an elongated ticket. 
Such an elongated ticket can also be found in polynomial time as follows:
One computes a path $W$ from $x$ to $y$ and outgoing paths from $x$ and from $y$ to all other zones, each minimizing the number of traversed zones.
Then $W$ is elongated by the shortest outgoing path.

\subsection{Zone Tariff with Overlap Areas} \label{sec-overlap}
Overlap areas, which allow stations to belong to several zones, are common in practice.
They are in particular used for stations near zone borders in order to make traveling in border regions more passenger-friendly.
This can be seen in Figure~\ref{Figure zoa Bsp Definition} where station $x_2$ is in an overlap area (depicted by the striped area) between the left and the right zone.
A traveler from $x_1$ to $x_2$ or a traveler from $x_2$ to $x_4$ traverses only one zone.
However, a traveler from $x_1$ via $x_2$ to $x_4$ traverses two zones.

In this section, we relax the requirement that the zones form a partition and allow a cover instead.
In particular, we may have stations (in overlap areas) which belong to more than one zone.
When determining the number of zones a path traverses, the zones for the stations in overlap areas are chosen in such a way that the total number of traversed zones of the path becomes minimal.
As before, we assume that there is at least one station in every zone (and in every overlap area).
In the following, we model this setting mathematically.\medskip

Given a zone cover $\cZ$, let $\zones(v) \subseteq \cZ$ with $\vert \zones(v)\vert \geq 1$ be the set of zones to which station~$v$ may be assigned. 
For a path $W=(x_1,\ldots,x_n)$, let $h \colon \{1,\ldots,n\} \to \cZ$ with $h(i)\in \zones(x_i)$ be the function that assigns a zone to each station in the path.
This assignment determines the zone border weights for any edge $\{x_i,x_{i+1}\}$, $i \in \{1,\ldots,n-1\}$, on the path $W$ as follows:
\[ b^h(x_i,x_{i+1}) \defeq \begin{cases}
0 &\text{if } h(i)=h(i+1),\\
1 &\text{otherwise.}
\end{cases}\] 
For the path $W\!$, we now choose an assignment $h$ which minimizes the number of zones that are traversed by $W\!$, i.e.,
\begin{equation}
	\label{eq-zneu}
	z(W):=1 + \min_{h}b^h(W) =
	1 + \min_{h} \sum_{i=1}^{n-1} b^h(x_i,x_{i+1}).
\end{equation}
The assignment of a station to a zone depends on the path which is shown in the
next example.

\begin{example}\label{zoa Bsp Definition}
	\begin{figure}[t]
		\centering
		\includegraphics[scale=1]{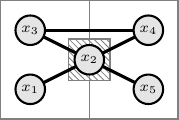}
		\caption{PTN with zones for Example~\ref{zoa Bsp Definition}.}
		\label{Figure zoa Bsp Definition}	
	\end{figure}
	Consider the PTN shown in Figure~\ref{Figure zoa Bsp Definition} with $\zones(x_1)=\zones(x_3)=\{L\}$, ${\zones(x_4)=\zones(x_5) =\{R\}}$ and $\zones(x_2)=\{L,R\}$. 
	The choice to which zone $x_2$ is assigned is made separately for every occurrence on every path.
	\begin{itemize}
		\item For $W=(x_1,x_2,x_3)$, we assign $h(2)=L$ and obtain $z(W) = 1+b^h(x_1,x_2) + b^h(x_2,x_3) = 1$.
		\item For $W=(x_4,x_2,x_5)$, we assign $h(2)=R$ and again obtain $z(W)=1$.
		\item For $W=(x_1,x_2,x_5)$, both possible assignments, i.e., $x_2$ in the left or in the right zone, yield $z(W)=2$.
		\item Finally, if a (non-simple) path traverses $x_2$ twice, we may assign $x_2$ even to two different zones as done for the path $W=(x_1,x_2,x_3,x_4,x_2,x_5)$, where we assign $x_2$ to the left zone for the first visit, i.e., $h(2)=L$, and to the right zone for the second visit, i.e., $h(5)=R$, to obtain 
		\begin{align*}
			z(W)&=1+b^h(x_1,x_2)+b^h(x_2,x_3)+b^h(x_3,x_4)+b^h(x_2,x_4)+b^h(x_2,x_5)\\
			&=1+0+0+1+0+0=2.
		\end{align*} 
	\end{itemize}
\end{example}

\begin{definition}\label{de-zoa}
	Let a PTN together with a zone cover $\cZ$ be given, and let $\W$ be the set of all paths in the PTN.
	A fare structure $p$ is a \emph{zone tariff with overlap areas (ZOA)} w.r.t.\ an increasing price function ${P \colon \N_{\geq 1} \rightarrow \R_{\geq 0}}$ if $p(W)=P(z(W))$ for all paths $W \in \W$ where $z$ is defined as in \eqref{eq-zneu}.
\end{definition}

This means that for each path $W$ a minimal assignment $h$ is fixed and the price for $W$ is then computed as a basic zone tariff.
Restricting a fixed minimal assignment of a path $W$ to a subpath does not necessarily yield a minimal assignment of the subpath.
Also a minimal assignment of a subpath of $W$ cannot necessarily be extended to a minimal assignment of $W\!$.
Furthermore, there not even need to be a minimal assignment for a path such that the restriction is also minimal for a subpath. 
Consider a line graph with four nodes $x_1, x_2, x_3, x_4$ and three zones $Z_1,Z_2,Z_3$ such that $Z_1=\{x_1,x_2\}$, $Z_2=\{x_2,x_3\}$ and $Z_3=\{x_3,x_4\}$, and let $P$ be strictly increasing. 
For the path $\{x_1,x_2,x_3,x_4\}$, the unique optimal solution is to assign $x_1$ and $x_2$ to $Z_1$ and $x_3$ and $x_4$ to $Z_3$.
However, for the path $\{x_2,x_3\}$ it is optimal to assign both nodes to $Z_2$.
Therefore, properties of the basic zone tariff cannot be transferred straightforwardly. 
Nevertheless, we always have ${z(W') \leq z(W)}$.

\begin{theorem}\label{zoa Thm Eig1}
	Let an increasing price function $P$ be given.
	All ZOAs w.r.t.\ $P$ satisfy the no-stopover property if and only if $P$ is subadditive.
\end{theorem}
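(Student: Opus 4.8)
The plan is to prove the two implications separately; both hinge on how the zone function $z$ behaves under concatenation. In contrast to the basic zone tariff, where $z(W_1+W_2)=z(W_1)+z(W_2)-1$, for a ZOA one only has the weaker bounds $z(W_1),z(W_2)\le z(W_1+W_2)\le z(W_1)+z(W_2)$ in general: the shared endpoint of $W_1$ and $W_2$ may be assigned to different zones in the minimizing assignments of the two subpaths, so the ``$-1$'' is lost precisely when overlap areas are present. The ``if'' direction then follows from the upper bound together with the monotonicity and subadditivity of $P$, while the ``only if'' direction is obtained by constructing, for any pair witnessing the failure of subadditivity, a PTN in which that upper bound is tight.

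For the ``if'' direction, assume $P$ is subadditive (and increasing, by the hypothesis of the theorem). Fix a ZOA $p$ w.r.t.\ $P$, a path $W=(x_1,\dots,x_n)\in\W$ with $n\ge 3$, and an intermediate station $x_i$; set $W_1=[x_1,x_i]$ and $W_2=[x_i,x_n]$. Take minimizing assignments $h_1$ for $W_1$ and $h_2$ for $W_2$ and paste them into an assignment $h$ of $W$ (use $h_1$ on $x_1,\dots,x_i$ and $h_2$ on $x_{i+1},\dots,x_n$). Every edge of $W$ except $\{x_i,x_{i+1}\}$ carries the same border weight under $h$ as it did under $h_1$ or under $h_2$, while $\{x_i,x_{i+1}\}$ carries weight at most $1$; hence $b^h(W)\le b^{h_1}(W_1)+b^{h_2}(W_2)+1$, and since the zone function of a path is one more than its minimum number of border crossings, this yields $z(W)\le z(W_1)+z(W_2)$. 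Therefore $p(W)=P(z(W))\le P(z(W_1)+z(W_2))\le P(z(W_1))+P(z(W_2))=p(W_1)+p(W_2)$, the first step by monotonicity of $P$ and the second by subadditivity; this is exactly the inequality defining the no-stopover property.

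For the ``only if'' direction, suppose $P$ is not subadditive, so $P(a+b)>P(a)+P(b)$ for some $a,b\ge 1$. I would build a PTN whose stations all lie on one simple path $W=(u_1,\dots,u_a,x_i,w_1,\dots,w_b)$ over pairwise distinct zones $A_1,\dots,A_a,B_1,\dots,B_b$, with $\zones(u_j)=\{A_j\}$, $\zones(w_j)=\{B_j\}$, and $x_i$ lying in the overlap area $\zones(x_i)=\{A_a,B_1\}$. Then $W_1:=[u_1,x_i]$ satisfies $z(W_1)=a$ (assign $x_i$ to $A_a$) and $W_2:=[x_i,w_b]$ satisfies $z(W_2)=b$ (assign $x_i$ to $B_1$), whereas on $W$ itself the two edges $\{u_a,x_i\}$ and $\{x_i,w_1\}$ always contribute exactly one border crossing together, regardless of which of its two zones $x_i$ is assigned to, so $z(W)=a+b$. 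Consequently the compound ticket $(W_1,W_2)$ costs $P(a)+P(b)<P(a+b)=p(W)$, so the no-stopover property fails for this ZOA. One checks routinely that this PTN is simple and connected, has a station in every zone and every overlap area, and that $x_i$ is an intermediate station of $W$ (here $a,b\ge 1$ gives $n=a+b+1\ge 3$).

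I expect the main obstacle to be the concatenation inequality $z(W_1+W_2)\le z(W_1)+z(W_2)$ and, specifically, recognizing that the sharper identity valid for basic zone tariffs fails here: one must argue carefully that a disagreement between the zones assigned to the common endpoint by the two minimizing sub-assignments inflates the border count by at most one. Given that, the converse construction is exactly the task of making that single unavoidable border crossing the only interaction between $W_1$ and $W_2$, which an overlap area at the junction accomplishes.
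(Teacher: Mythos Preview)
Your proof is correct and follows essentially the same approach as the paper. The only notable difference is in the ``if'' direction: the paper pins down $z(W_1)+z(W_2)\in\{z(W),\,z(W)+1\}$ and then splits into two cases, whereas you prove only the inequality $z(W)\le z(W_1)+z(W_2)$ via the pasted assignment and apply monotonicity followed by subadditivity in one stroke. Your version is slightly more direct, since the extra precision in the paper's two-case argument is not actually needed for the conclusion; the counterexample construction for the ``only if'' direction is the same in both.
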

\begin{proof}
	\begin{figure}[t]
		\centering
		\includegraphics[scale=1]{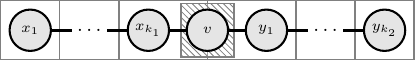}
		\caption{PTN with zones for Theorem~\ref{zoa Thm Eig1}.}
		\label{Figure zoa Lemma Eig1}	
	\end{figure}
	First assume that there are $k_1,k_2 \in \N_{\geq 1}$ such that $P(k_1+k_2)>P(k_1)+P(k_2)$. 
	We set ${k \defeq k_1+k_2}$ and consider the PTN depicted in Figure~\ref{Figure zoa Lemma Eig1}.
	The striped area is an overlap area so that node $v$ belongs to both neighboring zones.
	In the induced ZOA, the path ${W=(x_1,\ldots, x_{k_1},v,y_{1},\ldots, y_{k_2})}$ costs $p(W)=P(z(W))=P(k)$ no matter if we assign~$v$ to the left or to the right zone.
	Regarding the two subpaths $W_1 = (x_1,\ldots, x_{k_1},v)$ and $W_2 = (v,y_{1},\ldots, y_{k_2})$, we assign $v$ to the left zone to determine the number of zones for $W_1$ and to the right zone for $W_2$. 
	This means that the compound ticket~$(W_1, W_2)$ costs ${p(W_1)+p(W_2) = P(z(W_1))+P(z(W_2)) = P(k_1)+P(k_2)}$, which is cheaper than the standard ticket.
	Therefore, the no-stopover property is not satisfied.
	
	Conversely, we suppose that $P$ is subadditive.
	Let $p$ be a ZOA w.r.t.\ $P\!$, and let ${W=(x_1,\ldots,x_n)\in \W}$ be a path with a corresponding compound ticket $(W_1,W_2)$, i.e., ${W_1=[x_1,x_i]}$, $W_2= [x_i,x_n]$.
	Let $h, h_1, h_2$ be corresponding minimal assignments.
	We start by showing that ${z(W)\in \{z(W_1)+z(W_2)-1,z(W_1)+z(W_2)\}}$.
	We define new assignments for $W_1$ and $W_2$ by $h_1'(k)\defeq h(k)$ for $k \in \{1,\ldots,i\}$ and $h_2'(k) \defeq h(k+i-1)$ for $k \in \{1,\ldots,n-i+1\}$.
	This yields
	\begin{align*}
		z(W_1)+z(W_2) -1 \leq 1+\sum_{k=1}^{i-1} b^{h_1'}(x_k,x_{k+1}) + \sum_{k=i}^{n-1} b^{h_2'}(x_k,x_{k+1}) 
		= 1+\sum_{k=1}^{n-1} b^h(x_k,x_{k+1})= z(W).
	\end{align*}
	On the other hand, we can define a new assignment for $W$ by $h'(k)\defeq	h_1(k)$ for $k\in \{1,\ldots,i\}$ and $h'(k)\defeq h_2(k-i+1)$ for $k\in \{i+1,\ldots,n\}$.
	Then
	\begin{align*}
		z(W) \leq \underbrace{1+\sum_{k=1}^{i-1} b^{h'}(x_k,x_{k+1})}_{=z(W_1)} + \underbrace{b^{h'}(x_i,x_{i+1})}_{\leq 1} + \underbrace{\sum_{k=i+1}^{n-1} b^{h'}(x_k,x_{k+1})}_{\leq z(W_2)-1}
		\leq z(W_1)+z(W_2).
	\end{align*}
	In case that $z(W)=z(W_1) + z(W_2)$, it holds that
	\[ p(W) =P(z(W)) \stackrel{P \text{ subadd.}}{\leq} P(z(W_1)) + P(z(W_2)) = p(W_1)+p(W_2). \]
	Hence, consider the case that $z(W)=z(W_1) + z(W_2) -1$.
	It holds that
	\[ p(W) =P(z(W)) \stackrel{P \text{ incr.}}{\leq} P(z(W)+1) \stackrel{P \text{ subadd.}}{\leq} P(z(W_1)) + P(z(W_2))= p(W_1) + p(W_2). \]
	Therefore, the no-stopover property is satisfied.
\end{proof}

Analogously to Theorems~\ref{zone-elongation} and~\ref{zone2} for the basic zone tariff, we get the following results:
\begin{theorem}
	Let a price function $P$ be given.
	All ZOAs w.r.t.\ $P$ satisfy the no-elongation property if and only if $P$ is increasing.
\end{theorem}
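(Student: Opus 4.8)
The plan is to mirror the proof of Theorem~\ref{zone-elongation}, the analogous statement for the basic zone tariff, using as the only structural input the observation recorded just before Theorem~\ref{zoa Thm Eig1} that $z(W') \leq z(W)$ whenever $W'$ is a subpath of $W$; in particular $z([x_1,x_{n-1}]) \leq z([x_1,x_n])$ for every path $(x_1,\ldots,x_n) \in \W$ with $n \geq 2$.

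For the ``if'' direction I would assume $P$ is increasing, fix an arbitrary ZOA $p$ w.r.t.\ $P$ and an arbitrary path $W = (x_1,\ldots,x_n) \in \W$ with $n \geq 2$. From $z([x_1,x_{n-1}]) \leq z(W)$ and monotonicity of $P$ we get $p([x_1,x_{n-1}]) = P(z([x_1,x_{n-1}])) \leq P(z(W)) = p(W)$, which is exactly the no-elongation property. Note that, in contrast to the no-stopover analysis in Theorem~\ref{zoa Thm Eig1}, the subtlety that the optimal zone assignment of a subpath may differ from the restriction of the optimal assignment of $W$ is irrelevant here, since we only use the inequality $z(W') \leq z(W)$ and no additive relation between zone counts; hence this direction genuinely holds for all ZOAs, including those with overlap areas, and is not merely a consequence of Theorem~\ref{zone-elongation}.

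For the converse I would argue by contraposition. If $P$ is not increasing, choose $k \in \N_{\geq 2}$ with $P(k) < P(k-1)$ and take the PTN consisting of a single path $(x_1,\ldots,x_k)$ together with a zone cover in which $\zones(x_i)$ is the singleton $\{Z_i\}$ for pairwise distinct zones $Z_1,\ldots,Z_k$. This is a legitimate ZOA instance: a partition is a special case of a cover, there are no overlap areas, and every zone contains a station. Since each station lies in exactly one zone, $z([x_1,x_j])$ is just the number of zones visited, namely $j$, so $z([x_1,x_k]) = k$ and $z([x_1,x_{k-1}]) = k-1$. In the induced ZOA we then have $p([x_1,x_k]) = P(k) < P(k-1) = p([x_1,x_{k-1}])$, so the no-elongation property fails for this ZOA, proving the ``only if'' direction.

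I do not expect a real obstacle. The only points needing care are (i) confirming that a cover which happens to be a partition is an admissible input, so the counterexample is valid (equivalently, one observes that the basic zone tariff is precisely the special case of a ZOA whose cover is a partition, which already yields the ``only if'' direction via Theorem~\ref{zone-elongation}), and (ii) being explicit that the ``if'' direction rests solely on the monotonicity $z(W') \leq z(W)$ for subpaths rather than on any additive identity for zone counts.
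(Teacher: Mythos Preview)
Your proposal is correct and matches the paper's approach: the paper does not spell out a separate proof but simply states that the result follows analogously to Theorem~\ref{zone-elongation}, and your argument is precisely that analogue, using the inequality $z(W')\leq z(W)$ for subpaths (noted just before Theorem~\ref{zoa Thm Eig1}) for the ``if'' direction and the same line-of-distinct-zones counterexample for the ``only if'' direction.
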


\begin{lemma}\label{zoa-eig3}
	Let $p$ be a ZOA with an increasing price function $P\!$, and let $W$ be a path between $x,y \in V\!$.
	\begin{itemize}
	\item If $W$ traverses a minimum number of zones, i.e., it is an $x$-$y$-path which minimizes $z(W)$, it is a cheapest path from $x$ to $y$.
	\item If $P$ is strictly increasing, then $W$ is a cheapest path from $x$ to $y$ if and only if $W$ traverses a minimum number of zones.
	\end{itemize} 
	In both cases, the corresponding cheapest standard ticket $T=(W)$ is also a cheapest ticket if $p$ satisfies the no-stopover property.
\end{lemma}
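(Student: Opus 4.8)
The plan is to follow the template of the proof of Theorem~\ref{zone2}, with "minimum number of zones" now referring to the functional $z(W)$ defined via the minimizing assignment in~\eqref{eq-zneu}. The only fact about ZOAs needed beyond the definition is that $z$ is a well-defined function $\W\to\N_{\ge 1}$ (so that $\{z(W): W \text{ an } x\text{-}y\text{-path}\}$ is a non-empty subset of $\N_{\ge 1}$, hence attains its minimum, which also guarantees that a cheapest path exists); the sharper observation $z(W')\le z(W)$ for $W'\subseteq W$ recorded before Theorem~\ref{zoa Thm Eig1} is not even required here since we only ever compare whole $x$-$y$-paths.

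For the first bullet I would take an $x$-$y$-path $W$ minimizing $z$ and an arbitrary $x$-$y$-path $W'$; since $z(W)\le z(W')$ and $P$ is increasing (built into the definition of a ZOA, Definition~\ref{de-zoa}), we get $p(W)=P(z(W))\le P(z(W'))=p(W')$, so $W$ is a cheapest path. For the second bullet, assume $P$ strictly increasing: if a cheapest $x$-$y$-path $W$ did not minimize $z$, choosing $W'$ with $z(W')<z(W)$ would give $p(W')=P(z(W'))<P(z(W))=p(W)$, a contradiction; combined with the first bullet this yields the stated equivalence. For the final sentence, a ZOA has an increasing price function by Definition~\ref{de-zoa}, so the no-elongation property holds by the theorem immediately preceding this lemma; if in addition the no-stopover property holds, Theorem~\ref{Prelim-thm} applies and shows that the standard ticket $T=(W)$ is a cheapest ticket for $W$, and moreover that a cheapest standard ticket between $x$ and $y$ is a cheapest ticket between $x$ and $y$ — which together with the first two bullets gives the claim.

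I do not expect a real obstacle; the one point meriting care is \emph{not} to argue locally on subpaths. For a ZOA the minimizing assignment of a subpath of $W$ need not be the restriction of the minimizing assignment of $W$, so the "basic zone tariff" reasoning does not transfer verbatim; but this subtlety only affects the no-stopover analysis (already dealt with in Theorem~\ref{zoa Thm Eig1}) and is irrelevant to the monotonicity argument above, which compares whole $x$-$y$-paths through the single well-defined functional $z(\cdot)$.
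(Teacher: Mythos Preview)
Your proposal is correct and matches the paper's approach: the paper does not give a separate proof of Lemma~\ref{zoa-eig3} but simply states that the result follows ``analogously to Theorems~\ref{zone-elongation} and~\ref{zone2}'', which is precisely the monotonicity argument you spell out (using that $P$ is increasing by Definition~\ref{de-zoa}, and then invoking Theorem~\ref{Prelim-thm} together with the no-elongation result for the final claim).
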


In \eqref{eq-zneu} and hence in Definition~\ref{de-zoa} a \emph{minimal} assignment is needed.
Enumerating all possible assignments would lead to an exponential search. 
In the following, we show how such an assignment and even more a cheapest path for a ZOA can be computed in polynomial time.
To this end we construct a new graph, namely the overlaps-resolved graph, in which we resolve the overlap areas by creating new nodes and adding edge weights that represent the number of crossed zone borders.

\begin{definition}
	Let a PTN $(V,E)$ together with a zone cover $\cZ$ be given. 
	The \emph{overlaps-resolved graph} $G'=(V',E')$ is defined by      
	\begin{align*}
		V' & \defeq    V \cup V_{\cal Z} \ \mbox{ with } \ V_{\cal Z} \defeq \{(x,Z) \in V \times \cZ: Z \in \zones(x)\}, \\
		E' & \defeq \underbrace{ \{ \{x,(x,Z)\}: x \in V, (x,Z) \in V_{\cal Z} \}}_{\defeq E'_1} 
		\cup \underbrace{ \{ \{ (x,Z_1),(y,Z_2)\} \in V_{\cal Z} \times V_{\cal Z}:
			\{x,y\} \in E\}}_{\defeq E'_2}.
	\end{align*}       
	with weights
	\[ b'(e) \defeq \begin{cases}
	1			&\text{if } e \in E'_1,\\
	0			&\text{if } e=\{(x,Z_1),(y,Z_2)\}\in E'_2 \text{ and } Z_1=Z_2\\
	1			&\text{if } e=\{(x,Z_1),(y,Z_2)\}\in E'_2 \text{ and } Z_1 \neq Z_2.
	\end{cases} \]
	Note that for $e=\{(x,Z_1),(y,Z_2)\}$ and an assignment $h$ with $h(x)=Z_1, h(y)=Z_2$, we have ${b'(e)=b^h(x,y)}$.
\end{definition}

With $k_v:=|\zones(v)|$ for $v \in V\!$, the number of nodes and edges in $G'$ is ${\vert V' \vert = \vert V \vert + \sum_{v\in V} k_v}$ and $\vert E' \vert = \sum_{v\in V} k_v + \sum_{(v,w) \in E} k_v \cdot k_w$.
This means that they increase linearly and quadratically in the maximum number $\kmax \defeq \max_{v \in V} k_v$ of zones to which one station $v$ may belong: ${\vert V' \vert \in \mathcal{O}\bigl(\vert V \vert \cdot (\kmax+1) \bigr)}$ and ${\vert E' \vert \in \mathcal{O}(\kmax \cdot \vert V \vert + \kmax^2 \vert E \vert)}$.
This is polynomial in the input $(V, E, \cZ)$ since $\kmax\leq \vert \cZ \vert$.
In practice, $\kmax$ is usually small, often even $\kmax\leq 2$.
Note that the number of nodes and edges can be decreased by removing the node $v$ if $k_v=1$ which in practice happens in most cases.

\begin{example} \label{zoa Bsp overlaps-resolved graph}	
We have a look at the construction of an overlaps-resolved graph in the example depicted in Figure~\ref{Figure zoa Bsp overlaps-resolved graph}.
In \ref{Subfig: 037} we have a PTN with $\zones(x)=\{L\}$, ${\zones(v)=\{L,R\}}$ and ${\zones(y)={R}}$.
In \ref{Subfig: 038} the corresponding overlaps-resolved graph is shown.
The edges in $E'_1$ are dashed and have weight 1. 
Since $v$ belongs to two zones, it is represented by two nodes $(v,L), (v,R)$ in the overlaps-resolved graph, where $(v,L)$ represents~$v$ belonging to the left zone and $(v,R)$ to the right zone.
The node $v$ in $G'$ allows us to start or end a path in~$v$ without considering each node $(v,Z)$, $Z\in \zones(v)$, as the start or end node or determining to which zone $v$ should belong beforehand.
The misuse of edges in $E'_1$ is prevented by their weight.
\begin{figure}[tbp]
	\centering
	\begin{subfigure}[b]{0.4\textwidth}
		\centering
		\includegraphics[scale=1]{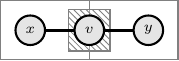}
		\caption{}
		\label{Subfig: 037}
	\end{subfigure}
	\begin{subfigure}[b]{0.4\textwidth}
		\centering
		\includegraphics[scale=1]{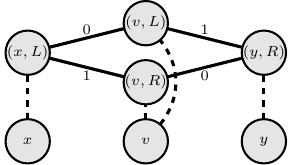}
		\caption{}
		\label{Subfig: 038}
	\end{subfigure}
	\caption{Construction of an overlaps-resolved graph for Example~\ref{zoa Bsp overlaps-resolved graph}.}
	\label{Figure zoa Bsp overlaps-resolved graph}
	\end{figure}
\end{example}

Now, we can apply any shortest path algorithm to the overlaps-resolved graph $G'$ corresponding to the PTN in order to compute paths that traverse a minimum number of zones for a ZOA as described in Algorithm~\ref{Alg zoa}.
If the path is already given and we are only interested in the minimal assignment, we can also use Algorithm~\ref{Alg zoa} where we reduce the PTN (and hence the overlaps-resolved graph) to the given path~$W\!$.
\begin{algorithm}
	\caption{ZOA: finding a cheapest path.}
	\label{Alg zoa}
	\SetKwInOut{Input}{Input}
	\SetKwInOut{Output}{Output}
	
	\Input{overlaps-resolved graph $G' = (V',E')$, two stations $x,y \in V$}
	\Output{$x$-$y$-path $W'$ in $G'$}
	
	Compute a shortest $x$-$y$-path $W'$ in $G'$.\\
	\Return $W'$
\end{algorithm}

\begin{lemma}
	Let $p$ be a ZOA with an increasing price function. Algorithm~\ref{Alg zoa} yields a cheapest path~$W$ from $x$ to $y$ in polynomial time.
\end{lemma}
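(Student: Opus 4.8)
The plan is to establish correctness and polynomiality separately. For correctness I will show that the $x$-$y$-path $W'$ returned by Algorithm~\ref{Alg zoa} corresponds, after projecting away the zone labels and contracting the two $E_1'$-edges at its ends, to an $x$-$y$-path $W$ in the PTN that visits a minimum number of zones; Lemma~\ref{zoa-eig3} then immediately gives that $W$ is a cheapest path from $x$ to $y$ (and $T=(W)$ a cheapest ticket whenever $p$ additionally satisfies the no-stopover property). For polynomiality I will invoke the size bounds $\vert V'\vert\in\mathcal O(\vert V\vert(\kmax+1))$ and $\vert E'\vert\in\mathcal O(\kmax\vert V\vert+\kmax^2\vert E\vert)$ established just before Example~\ref{zoa Bsp overlaps-resolved graph}, together with $\kmax\le\vert\cZ\vert$ and the polynomial running time of a shortest-path algorithm such as Dijkstra on nonnegative weights.

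The heart of the argument is a correspondence between $x$-$y$-paths in $G'$ and pairs $(W,h)$ consisting of an $x$-$y$-path $W=(v_0,\dots,v_n)$ in the PTN and a zone assignment $h$ for it. Since the only edges of $G'$ incident to a node of $V$ lie in $E_1'$, any simple $x$-$y$-path $W'$ in $G'$ must begin with an edge $\{x,(x,Z_0)\}\in E_1'$ and end with an edge $\{(y,Z_n),y\}\in E_1'$. I will further argue that a \emph{shortest} such path never passes through an interior node of $V$: a detour of the form $(v,Z),\,v,\,(v,Z')$ with $v\in V$ contributes weight $2$, whereas bridging the two adjacent $V_{\cZ}$-nodes directly by an $E_2'$-edge through a suitable copy of $v$ contributes at most $1$, so any such detour can be shortcut. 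Hence $W'$ has the shape $x,(v_0,Z_0),(v_1,Z_1),\dots,(v_n,Z_n),y$ with $v_0=x$, $v_n=y$ and $\{v_i,v_{i+1}\}\in E$; setting $W=(v_0,\dots,v_n)$ and $h(i)=Z_i$ and using $b'(\{(v_i,Z_i),(v_{i+1},Z_{i+1})\})=b^h(v_i,v_{i+1})$ gives $b'(W')=2+\sum_{i=0}^{n-1}b^h(v_i,v_{i+1})=2+b^h(W)$. Conversely, every $x$-$y$-path $W$ in the PTN with an assignment $h$ arises in this way from an $x$-$y$-path in $G'$ of weight $2+b^h(W)$.

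Minimizing over both sides, the weight of a shortest $x$-$y$-path in $G'$ equals $2+\min_W\min_h b^h(W)$, which by the definition of $z$ in~\eqref{eq-zneu} differs from $\min_W z(W)$ only by a fixed additive constant. Therefore the PTN path $W$ underlying the output $W'$ of Algorithm~\ref{Alg zoa} satisfies $z(W)=\min_{W''}z(W'')$, i.e., it visits a minimum number of zones, and is hence a cheapest path by Lemma~\ref{zoa-eig3}; together with the size and running-time bounds this proves the lemma. The main obstacle I expect is the careful handling of the "plain" station copies $v\in V$ inside $G'$: they are present only to serve as start and end nodes, so one must show that shortest paths never profit from routing through them in the interior, and one must keep track of the off-by-one coming from the two mandatory $E_1'$-edges at the endpoints versus the $+1$ hidden in the definition of $z$. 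Everything else is bookkeeping.
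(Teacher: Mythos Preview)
Your proposal is correct and follows essentially the same route as the paper: rule out interior $V$-nodes in a shortest $x$--$y$-path of $G'$, project to a PTN path with an assignment $h$, use the identity $b'(W')=b^h(W)+2$, lift a hypothetical better PTN path back to $G'$ for a contradiction, and finish with Lemma~\ref{zoa-eig3} and the size bounds on $G'$. One small fix to your shortcut step: there is \emph{no} $E_2'$-edge between two copies $(v,Z)$ and $(v,Z')$ of the same station, because $E_2'$ only contains pairs $\{(x,Z_1),(y,Z_2)\}$ with $\{x,y\}\in E$ and the PTN is simple; the paper instead looks one node further back and replaces the three-edge subpath $(w,Z_1),(v,Z),v,(v,Z')$ (weight $\ge 2$) by the single $E_2'$-edge $\{(w,Z_1),(v,Z')\}$ (weight $\le 1$), which is the clean way to execute the idea you sketched.
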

\begin{proof}
	Let $W'=(x,v'_1,v'_2,\ldots,v'_K,y)$ be the output of Algorithm~\ref{Alg zoa}.
	We first note that $v'_i \in V_{\cal Z}$ for all $i \in \{1,\ldots,K\}$: 
	Assume $v \defeq v'_i \notin V_{\cal Z}$ for some $i$, so we have $v \in V \setminus \{x,y\}$. 
	Since the node $v$ is only incident to edges in $E'_1$, this means that $W'$ contains a subpath $W''\defeq ((w,Z_1),(v,Z_2),v,(v,Z_3))$ with $\{w,v\} \in E'_2$. 
	Hence, also $e':=\{(w,Z_1),(v,Z_3)\} \in E'_2$. 
	Replacing the subpath $W''$ in $W'$ by $e'$ leads to a new path in $G'$ with strictly smaller weight due to $b'(W'')\geq 2 > b'(e')$. This is a contradiction to $W'$ being a shortest path in $G'$. 
	
	We now transfer $W'$ to a path $W=(v_1,\ldots,v_K)$ in the PTN by replacing $v'=(v,Z)$ by the projection on its first component.
	This yields $v_1=x,\ v_K=y$ and $\{v_i,v_{i+1}\} \in E$ for all ${i \in \{1,\ldots,K-1\}}$ by definition of $E'_2$.
	We hence obtain an $x$-$y$-path in the PTN together with an assignment $h_1$ which maps $h_1(v)=Z$ if $v'=(v,Z) \in W'$. 
	Due to the definition of the edge weights in $G'$, we get $b'(W')=b^{h_1}(W)+2$ adding the weights for the first and last edge in $W'$, and $h_1$ is a minimal assignment for $W\!$.
	It remains to show that it is a cheapest path.
	
	Assume that $W^*=(w_1,\ldots,w_L)$ with $w_1=x$, $w_L=y$, is a cheaper $x$-$y$-path in the PTN with the minimal assignment~$h_2$, i.e., $b^{h_2}(W^*)=z(W^*)<z(W) = b^{h_1}(W)$. 
	We lift $W^*$ to the path 
	\[ {W^*}'=\big (w_1,(w_1,h_2(w_1)),(w_2,h_2(w_2)),\ldots,(w_L,h_2(w_L)),w_L \big ). \]
	Then $b'({W^*}')=b^{h_2}(W^*)+2 < b^{h_1}(W) + 2$, a contradiction to $W'$ being a shortest path in $G'$.
	
	As shown above, the overlaps-resolved graph is polynomial in the input and hence Algorithm~\ref{Alg zoa} can be solved in polynomial time due to the polynomial runtime of shortest path algorithms like Dijkstra.
\end{proof}

\begin{corollary}
	Let $p$ be a ZOA with an increasing price function $P\!$.
	\begin{itemize}
	\item Algorithm~\ref{Alg zoa} computes a cheapest path and hence a cheapest standard ticket in polynomial time.
	\item If $P$ is subadditive, then Algorithm~\ref{Alg zoa} yields a cheapest ticket in polynomial time.
	\end{itemize}
\end{corollary}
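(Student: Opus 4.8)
The plan is to assemble the corollary directly from the three immediately preceding results, so there is essentially no new mathematical content—only a careful bookkeeping of hypotheses.

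For the first bullet, I would simply invoke the lemma just proved: Algorithm~\ref{Alg zoa} applied to the overlaps-resolved graph $G'$ returns an $x$-$y$-path $W$ that visits a minimum number of zones, and by Lemma~\ref{zoa-eig3} such a path is a cheapest path from $x$ to $y$. By definition a cheapest path is one whose standard ticket $T=(W)$ is a cheapest standard ticket, so the first claim follows; polynomiality was established in the lemma (the overlaps-resolved graph has size polynomial in $(V,E,\cZ)$ since $\kmax\leq\vert\cZ\vert$, and a shortest-path routine such as Dijkstra runs in polynomial time).

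For the second bullet, the extra hypothesis is that $P$ is subadditive. First I note that, since $p$ is a ZOA, the price function $P$ is increasing by Definition~\ref{de-zoa}; hence the no-elongation property holds. Next, because $P$ is additionally subadditive, Theorem~\ref{zoa Thm Eig1} gives that $p$ satisfies the no-stopover property. With both properties in hand, the ``in both cases'' clause of Lemma~\ref{zoa-eig3} (equivalently, Theorem~\ref{Prelim-thm}) tells us that the cheapest standard ticket $T=(W)$ produced in the first bullet is in fact a cheapest ticket. Since $W$ was already computed in polynomial time, the second claim follows.

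I do not expect any real obstacle here; the only thing to be careful about is to make explicit that ``$P$ increasing'' is built into the definition of a ZOA, so that subadditivity alone is enough to trigger \emph{both} the no-stopover and the no-elongation property, which is exactly what Theorem~\ref{Prelim-thm} requires to equate cheapest tickets with cheapest standard tickets. One could also phrase the argument purely through Lemma~\ref{zoa-eig3}, whose final sentence packages precisely this implication, which keeps the proof to two lines.
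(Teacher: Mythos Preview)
Your proposal is correct and matches the paper's approach: the paper states this corollary without proof, treating it as immediate from the preceding lemma (Algorithm~\ref{Alg zoa} yields a cheapest path in polynomial time), Lemma~\ref{zoa-eig3}, Theorem~\ref{zoa Thm Eig1}, and Theorem~\ref{Prelim-thm}. Your observation that ``$P$ increasing'' is baked into Definition~\ref{de-zoa}, so that subadditivity suffices for both properties, is exactly the point that makes the second bullet go through.
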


\subsection{Zone Tariff with Single Counting} \label{sec-single}
In this section, we study zone tariffs as they are usually implemented in practice.
In contrast to the basic zone tariff with multiple counting, each zone is only counted once independent of how many times it is entered on a path. 
More precisely, the price of a path is determined by the number of different zones traversed along a path.
Again, we assume that there are no empty zones on edges, otherwise we add virtual nodes in the same way as in Section~\ref{section basic zone}.

As for the basic zone tariff, the set $\zones(v) \subseteq Z$ with $\vert \zones(v) \vert = 1$ denotes the zone to which node $v \in V$ belongs and the set of resulting zones $\cZ$ is a partition of $V\!$.
However, in a zone tariff with single counting, we need a different \emph{zone function} which we define for every path $W \in \W$ by counting the number of \emph{different} zones which are traversed by $W\!$, i.e.,
\begin{equation}\label{eq-zbar}
	\bar{z}(W) := \vert \bigcup_{x\in V(W)} \zones(x) \vert.
\end{equation}

\begin{definition}
	Let a PTN together with a zone partition $\cZ$ be given.
	Let $\W$ be the set of all paths in the PTN.
	A fare structure $p$ is a \emph{zone tariff with single counting} w.r.t.\ a price function
	$P \colon \N_{\geq 1} \rightarrow \R_{\geq 0}$ if $p(W)=P(\bar{z}(W))$ for each path $W \in \W$ where the zone function $\bar{z}$ is defined as in~\eqref{eq-zbar}.
\end{definition}

For a path that does not traverse a zone more than once, the basic zone tariff and the zone tariff with single counting coincide. 
Hence, many examples and results from the basic zone tariff can be transferred to the zone tariff with single counting.
However, in contrast to the case for the basic zone tariff, cheapest paths always exist for a zone tariff with single counting, even if the price function is decreasing, as we show in the following lemma.

\begin{lemma} \label{zone-no-double-cheapest-path}
	For a zone tariff with single counting, there always exist cheapest paths.
\end{lemma}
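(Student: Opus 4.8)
The plan is to exploit that a zone tariff without double counting assigns to the paths between a fixed pair of stations only finitely many distinct prices, so that the cheapest price is actually attained by a path rather than merely approached.

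First I would fix $x,y\in V$ and let $\W_{xy}\subseteq\W$ be the set of all $x$-$y$-paths; since the PTN is connected, $\W_{xy}\neq\emptyset$. Next I would bound the zone function: for every $W\in\W_{xy}$ we have $\bigcup_{v\in V(W)}\zones(v)\subseteq\cZ$, hence $1\leq\bar{z}(W)\leq\vert\cZ\vert$. Writing $K\defeq\vert\cZ\vert$, this shows $p(W)=P(\bar{z}(W))\in\{P(1),\ldots,P(K)\}\subseteq\R_{\geq 0}$ for every $W\in\W_{xy}$, so the set of attained prices $\{\,p(W):W\in\W_{xy}\,\}$ is a nonempty finite subset of $\R_{\geq 0}$. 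A nonempty finite set of reals has a minimum, and this minimum, being an element of the set, is realized by some concrete path $W^{*}\in\W_{xy}$ (any path achieving the smallest value of $P(\bar{z}(\cdot))$ that actually occurs). Such a $W^{*}$ is then a cheapest $x$-$y$-path, which is what we want.

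I do not expect a genuine obstacle; the one point worth emphasizing is the contrast with the basic zone tariff, where $z(W)=1+b(W)$ is unbounded over $\W_{xy}$ (a path may cross arbitrarily many zone borders), so a strictly decreasing price function leaves the infimum unattained. Here $\bar{z}$ counts only \emph{distinct} visited zones and is therefore capped by $K$ independently of the path's length, and this boundedness is precisely what drives the argument; note that it works for an arbitrary price function $P$, with no monotonicity assumption needed.
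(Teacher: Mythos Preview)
Your proposal is correct and follows essentially the same approach as the paper: bound $\bar{z}(W)$ by $\vert\cZ\vert$, conclude that the set of attainable prices is a finite nonempty subset of $\{P(1),\ldots,P(\vert\cZ\vert)\}$, and hence admits a minimum realized by some path. Your version is slightly more explicit (noting connectedness to ensure $\W_{xy}\neq\emptyset$) and adds the helpful contrast with the basic zone tariff, but the core argument is identical.
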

\begin{proof}
	Let $x,y \in V$ be stations in the PTN.
	Since we do not count zones multiple times, the number of traversed zones of an $x$-$y$-path is bounded from above by the total number of zones $\vert \cZ \vert \in \N_{\geq 1}$ in the PTN.
	Let~$M$ be the set of all prices that are possible for $x$-$y$-paths.
	We have $M \subseteq \{ P(1),\ldots, P(\vert \cZ \vert) \}$.
	Hence, $\vert M \vert < \infty$ and $M$ admits a minimum.
	Therefore, there exists a cheapest $x$-$y$-path.
\end{proof}

Due to practical relevance, we again focus on increasing price functions.

\begin{theorem}\label{zone no double Thm Eig1 incr}
	Let an increasing  price function $P$ be given.
	All zone tariffs with single counting w.r.t.\ $P$ satisfy the no-stopover property if and only if \eqref{eq-nostopzone} holds, i.e., if and only if
	${P(k) \leq P(i) + P(k-i+1)} \mbox{ for all } k \in \N_{\geq 1},\, i \in \{1,\ldots, k\}$.
\end{theorem}
\begin{proof}
	If \eqref{eq-nostopzone} does not hold for some k and i, then the no-stopover property does not hold as shown in the proof of Theorem~\ref{zone1}.
	
	Conversely, let \eqref{eq-nostopzone} hold.
	Let $p$ be any zone tariff with single counting w.r.t.\ $P\!$. 
	For a path $W\in \W$, we define $k\defeq \bar{z}(W)$ and let $(W_1,W_2)$ be a corresponding compound ticket.
	Then $W_1$ and $W_2$ have at least one zone in common, namely the zone in which we make the stopover.
	This is only counted once for $W\!$.
	Therefore, we have $\bar{z}(W_1)+\bar{z}(W_2) \geq k+1$ which is equivalent to $k-\bar{z}(W_1)+1\leq \bar{z}(W_2)$.
	With $i \defeq \bar{z}(W_1)\leq k$, we conclude from \eqref{eq-nostopzone} that
	\[p(W)=P(k)\leq P(i)+P(k-i+1)\leq P(\bar{z}(W_1))+P(\bar{z}(W_2))=p(W_1)+p(W_2).\]
	So the no-stopover property is satisfied.
\end{proof}

Theorems~\ref{zone-elongation} and~\ref{zone2} about the no-elongation property and cheapest paths in the basic zone tariff are analogously true for the zone tariff with single counting, which yields the following results:

\begin{theorem}\label{zone no double Thm Eig2}
	Let a price function $P$ be given.
	All zone tariffs with single counting w.r.t.\ $P$ satisfy the no-elongation property if and only if $P$ is increasing.
\end{theorem}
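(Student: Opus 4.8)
*Let a price function $P$ be given. All zone tariffs without double counting w.r.t.\ $P$ satisfy the no-elongation property if and only if $P$ is increasing.*

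The plan is to mirror the proof of Theorem~\ref{zone-elongation} for the basic zone tariff, replacing $z$ by $\bar z$ and using the single structural fact that distinguishes the two settings: extending a path by one station can only keep or increase the number of \emph{distinct} zones visited. Concretely, for the "if" direction I would take any path $W=(x_1,\ldots,x_n) \in \W$ with $n\geq 2$ and observe that $V([x_1,x_{n-1}]) \subseteq V(W)$, hence $\bigcup_{x\in V([x_1,x_{n-1}])}\zones(x) \subseteq \bigcup_{x\in V(W)}\zones(x)$, so $\bar z([x_1,x_{n-1}]) \leq \bar z(W)$. If $P$ is increasing, this monotonicity of the index immediately gives $p([x_1,x_{n-1}]) = P(\bar z([x_1,x_{n-1}])) \leq P(\bar z(W)) = p(W)$, which is exactly the no-elongation inequality.

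For the "only if" direction I would argue contrapositively: suppose $P$ is not increasing, so there is some $k\in\N_{\geq 2}$ with $P(k) < P(k-1)$. I then need to exhibit a concrete zone tariff without double counting in which the no-elongation property fails, i.e.\ a PTN, a zone partition, and a path $(x_1,\ldots,x_m)$ with $\bar z([x_1,x_{m-1}]) = k-1$ and $\bar z([x_1,x_m]) = k$. The natural witness is a path through $k$ pairwise distinct zones $Z_1,\ldots,Z_k$, one station per zone, so that $[x_1,x_k]$ visits $k$ distinct zones and its prefix $[x_1,x_{k-1}]$ visits $k-1$; then $p([x_1,x_k]) = P(k) < P(k-1) = p([x_1,x_{k-1}])$, violating no-elongation. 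This is essentially the same construction used in Theorem~\ref{zone-elongation}, and since on a path visiting each zone at most once the zone tariff without double counting coincides with the basic zone tariff, one could even just cite that fact.

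There is no real obstacle here; the only thing to be slightly careful about is the direction of the subpath. No-elongation as stated only concerns chopping off the \emph{last} station, so I only need $\bar z([x_1,x_{n-1}]) \leq \bar z(W)$, and the symmetric statement about the first station (used implicitly in the remark following the definition of no-elongation) follows by reversing the path, since $\bar z$ depends only on the vertex set of the path and is reversal-invariant. Thus the whole argument is just monotonicity of $\bar z$ under taking subpaths plus the definitional translation $p = P\circ\bar z$, exactly parallel to the basic-zone-tariff case, which is why the theorem is stated as being "analogously true."
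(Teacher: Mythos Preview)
Your proposal is correct and matches the paper's approach exactly: the paper states this theorem as ``analogously true'' to Theorem~\ref{zone-elongation} without a separate proof, and your argument is precisely that analogy---replace $z$ by $\bar z$, use that $\bar z$ is monotone under taking subpaths (because the vertex set only shrinks), and reuse the same line-of-$k$-zones counterexample for the converse, where $\bar z$ and $z$ coincide since no zone is repeated.
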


\begin{lemma}\label{zone no double Cor Eig3}
	Let $p$ be a zone tariff with single counting with an increasing price function $P\!$, and let $W$ be a path between $x,y \in V\!$.
	\begin{itemize}
	\item If $W$ traverses a minimum number of different zones, it is a cheapest path from $x$ to $y$.
	\item If $P$ is strictly increasing, then $W$ is a cheapest path from $x$ to $y$ if and only if $W$ traverses a minimum number of different zones.
	\end{itemize} 
	In both cases, the corresponding cheapest standard ticket $T=(W)$ is also a cheapest ticket if $p$ satisfies the no-stopover property.
\end{lemma}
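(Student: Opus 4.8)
The plan is to mirror the structure of Theorem~\ref{zone2} almost verbatim, substituting the zone function $\bar z$ for $z$ throughout, and to invoke the corresponding zone-tariff-without-double-counting results already established (Theorems~\ref{zone no double Thm Eig2} and~\ref{zone no double Thm Eig1 incr}) in place of Theorems~\ref{zone-elongation} and~\ref{zone1}. First I would prove the first bullet: if $W$ is an $x$-$y$-path minimizing $\bar z$, then for any other $x$-$y$-path $W'$ we have $\bar z(W) \leq \bar z(W')$, so by monotonicity of $P$, $p(W) = P(\bar z(W)) \leq P(\bar z(W')) = p(W')$; hence $W$ is a cheapest path. Note that cheapest paths exist here by Lemma~\ref{zone-no-double-cheapest-path}, so this statement is not vacuous.

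For the second bullet, assume $P$ is strictly increasing. The ``if'' direction is the first bullet. For ``only if'', suppose $W$ is a cheapest $x$-$y$-path but does not minimize $\bar z$; let $W^\ast$ be an $x$-$y$-path with $\bar z(W^\ast) < \bar z(W)$. Strict monotonicity gives $p(W^\ast) = P(\bar z(W^\ast)) < P(\bar z(W)) = p(W)$, contradicting optimality of $W$. Hence $W$ must visit a minimum number of different zones. This step is essentially the contrapositive packaging of the first bullet under strictness and needs no new idea.

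Finally, for the ticket-versus-standard-ticket claim in both cases: since $P$ is increasing, the no-elongation property holds by Theorem~\ref{zone no double Thm Eig2}. If in addition $p$ satisfies the no-stopover property, then by Theorem~\ref{Prelim-thm} the standard ticket $T=(W)$ is a cheapest ticket for the path $W$, and in particular a cheapest standard ticket between $x$ and $y$ is a cheapest ticket. Combining this with the two bullets completes the proof.

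I do not anticipate any genuine obstacle: the only subtlety is remembering that $\bar z$ rather than $z$ is the relevant quantity, and that the existence of cheapest paths (needed so the ``cheapest path'' notion is well-defined) comes from Lemma~\ref{zone-no-double-cheapest-path} rather than from requiring $P$ eventually constant as in the basic zone tariff. The routine inequalities are immediate from monotonicity, so the write-up is short.
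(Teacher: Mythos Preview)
Your proposal is correct and follows exactly the approach the paper intends: the paper states that the result is obtained analogously to Theorem~\ref{zone2}, replacing $z$ by $\bar z$ and invoking Theorem~\ref{zone no double Thm Eig2} and Theorem~\ref{Prelim-thm} for the final ticket claim, which is precisely what you do. The additional remark about existence via Lemma~\ref{zone-no-double-cheapest-path} is a nice touch but not strictly needed for the argument.
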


Due to Lemma~\ref{zone no double Cor Eig3}, we are interested in an algorithm that computes a path which traverses a minimum number of different zones.
Unfortunately, finding such a path is NP-hard.
Let us consider the decision version of this problem, which we will call \textsc{Minimum-Zone Path} (MZP).
It can be stated as follows:
\begin{description}
	\item[Instance:] PTN $(V,E)$ involving a zone partition for the zone tariff with single counting (without empty zones on edges), i.e., zone sets $\zones(v)$ for all $v \in V\!$, nodes $x,y \in V$ and an integer $K \in \N_{\geq 1}$.
	\item[Question:] Is there a path from $x$ to $y$ that traverses at most $K$ different zones?
\end{description}

MZP was used in a similar setting by~\cite{Blanco2016}.
They deal with the \textsc{Shortest Path Problem With Crossing Costs} (SPPCC) when optimizing flight trajectories with overflight costs. While we use a \emph{counting zones pricing}, the case of SPPCC with constant crossing cost functions (SPPCC/C/$\cdot$) is a \emph{cumulative pricing} (see the terminology of \cite{OttoBoysen17}). The special case with arc weights set to 0 and crossing costs of 1, which is considered in Proposition~3 of \cite{Blanco2016}, coincides with MZP and has already been shown to be NP-complete there.
We give the proof for our setting:

\begin{theorem}\label{zone no double Thm NP-complete}
	MZP is NP-complete.
\end{theorem}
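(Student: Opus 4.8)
The plan is to show MZP is NP-complete by first noting membership in NP: a path from $x$ to $y$ (which we may assume is simple, since revisiting a station never decreases the set of visited zones) serves as a certificate, and counting the distinct zones it visits and checking the count against $K$ is clearly polynomial. The bulk of the work is the hardness reduction, and my plan is to reduce from a well-known NP-complete covering-type problem --- the natural candidate is \textsc{Vertex Cover} or \textsc{3-SAT}, but the structure of MZP (minimizing the number of distinct ``labels'' touched while connecting two fixed endpoints) suggests that a reduction from \textsc{Set Cover} or, more cleanly, from \textsc{Vertex Cover} will be most transparent.

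Concretely, I would take a \textsc{Vertex Cover} instance: a graph $H=(U,F)$ and an integer $k$, asking whether $H$ has a vertex cover of size $\le k$. The idea is to build a PTN in which ``choosing a zone to enter'' corresponds to ``putting a vertex into the cover.'' Create one zone $Z_u$ for each vertex $u \in U$ (plus possibly two extra zones for $x$ and $y$ so that these endpoints do not inflate the count in a way that depends on the instance). For each edge $\{u,v\} \in F$, build a small gadget --- e.g., a path segment that the traveler must traverse to get from $x$ toward $y$, and this segment offers exactly two parallel routes, one passing through a station assigned to zone $Z_u$ and one through a station assigned to zone $Z_v$. Chain these edge gadgets in series between $x$ and $y$. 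Then any $x$-$y$-path must, for each edge $\{u,v\}$, pick the $Z_u$-branch or the $Z_v$-branch, i.e. it selects an endpoint of each edge; the set of zones visited (ignoring the fixed contribution of $x$'s and $y$'s zones) is exactly the set of chosen vertices, which forms a vertex cover of $H$. Setting $K := k + c$ where $c$ is the constant number of ``overhead'' zones forced by the construction, one gets that $H$ has a vertex cover of size $\le k$ if and only if there is an $x$-$y$-path in the PTN visiting $\le K$ distinct zones. I would also check that the construction respects the standing assumptions (simple connected PTN, no empty zones on edges, $|\zones(v)|=1$) --- placing an actual station in every zone and inserting virtual-free direct edges handles this.

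The main obstacle I anticipate is making the gadget ``rigid'' in the right direction: I need the *minimum*-zone path to correspond to an *optimal* vertex cover, which is automatic, but I must ensure no path can cheat by using parts of several edge gadgets out of order to revisit a zone already counted and thereby cover edges ``for free'' in a way that does not correspond to a genuine vertex cover --- and conversely that a traveler cannot be forced to visit extra zones beyond the chosen cover. Running the edge gadgets strictly in series (so the $x$-$y$-path must pass through them in a fixed sequence) and making each gadget's two branches touch only their two designated zones should pin this down; the distinct-zone count of any simple traversal is then precisely $c$ plus the number of distinct vertex-zones selected across the gadgets. A secondary, more bookkeeping-level obstacle is handling the zones of $x$ and $y$ and the connector edges between consecutive gadgets so that they contribute a fixed, instance-independent amount $c$ to every $x$-$y$-path; once that is arranged, the equivalence ``$\bar z(W) \le K \iff$ vertex cover of size $\le k$'' falls out, and combined with NP-membership this proves Theorem~\ref{zone no double Thm NP-complete}.
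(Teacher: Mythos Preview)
Your reduction from \textsc{Vertex Cover} is correct and will go through essentially as you describe: putting all connector nodes (including $x$ and $y$) into a single ``spine'' zone $Z_0$ gives overhead $c=1$, and the series arrangement of diamond gadgets forces every simple $x$--$y$-path to choose one branch per edge, so the set of non-spine zones visited is exactly a vertex cover of $H$; conversely, any cover of size $\le k$ yields a path visiting $\le k+1$ zones. The paper, however, takes a different and shorter route: it reduces from \textsc{Minimum-Color Single-Path} (MCSiP)---the problem of finding an $s$--$t$-path using at most $k$ edge colors---which is cited as already NP-complete. That reduction is almost a relabelling: subdivide every edge with a new station placed in the zone corresponding to that edge's color, and put all original vertices into one dummy zone; then a path uses $\le k$ colors iff it visits $\le k+1$ zones. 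The paper's approach is slicker because MCSiP is essentially the edge-labelled variant of MZP, so almost nothing needs to be built; your approach requires constructing and verifying the serial diamond gadgets, but has the compensating merit of reducing directly from a canonical Karp problem rather than relying on an external NP-completeness result for MCSiP.
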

\begin{proof}
	MZP is in NP, since given a path $W\!$, it can be checked in polynomial time whether it traverses at most $K$ different zones, i.e., if $\bar{z}(W) \leq K$. 
	
	For the polynomial reduction, we use \textsc{Minimum-Color Single-Path} (MCSiP), which was introduced and shown to be NP-complete by \cite{MinimumColorPath}.
	Its decision version can be stated as follows:
	\begin{description}
		\item[Instance:] Graph $G=(V,E)$, a finite set of colors $C$, a function $c \colon E \to C$ which assigns a color to each edge, nodes $x,y \in V$ and an integer $k \in \N$.
		\item[Question:] Is there a path from $x$ to $y$ that uses at most $k$ colors?
	\end{description}
	Let a graph $G =(V,E)$, a set of colors $C$ with the assignment function $c \colon E \to C$, two nodes $x,y \in V$ (w.l.o.g. we assume $x \not=y$) and an integer $k \in \N_{\geq 1}$ be given.
	We construct an instance for MZP.
	In order to obtain a PTN for MZP, we add a node on each edge to represent the color of the edge.
	Hence, we define the sets of nodes and edges by
	\begin{align*}	
		&V' \defeq V \cup \{v_e: e\in E\},
		&E' \defeq \{\{x,v_e\},\{v_e,y\}: \{x,v\}=e\in E\}.
	\end{align*}
	This is polynomial since $\vert V' \vert = \vert V\vert + \vert E \vert$ and $\vert E'\vert = 2 \vert E \vert$.
	Finally, we define the zone partition as follows by introducing a dummy zone with the label Null which is not in~$C$ (see Figure~\ref{Figure zone no double Thm NP-complete}):
	\[
	\zones(x) \defeq
	\begin{cases}
	\{\textup{Null}\} 	&\text{if } x \in V\\
	\{c(e)\}			&\text{if } x\in V'\setminus V.
	\end{cases}
	\]
	Thus, a path traverses at most $K \defeq k+1$ zones if and only if it uses at most $k$ colors.
	\begin{figure}[tbp]
		\centering
		\includegraphics[scale=1]{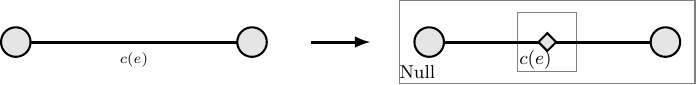}
		\caption{Construction of a zone partition in the proof of Theorem~\ref{zone no double Thm NP-complete}.}
		\label{Figure zone no double Thm NP-complete}	
	\end{figure}
\end{proof}

\begin{corollary}
	The cheapest ticket problem is NP-hard for a zone tariff with single counting, even if
	the price function is strictly increasing and the no-stopover property is satisfied.
\end{corollary}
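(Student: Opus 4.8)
The plan is to reduce MZP, which is NP-complete by Theorem~\ref{zone no double Thm NP-complete}, to the decision version of the cheapest ticket problem: given a fare system on a PTN, two stations $x,y$ and a threshold $t \in \R_{\geq 0}$, is there a ticket from $x$ to $y$ of price at most $t$? The point of the ``even if'' clause is that the reduction must stay inside the class of zone tariffs without double counting whose price function is strictly increasing \emph{and} which satisfy the no-stopover property, so the first step is to fix a price function with both features. I would take the affine function $P(k) = k$ (or any $P(k) = f + k\bar{p}$ with $\bar{p} > 0$ and $f \geq -\bar{p}$). It is strictly increasing, and it satisfies~\eqref{eq-nostopzone} since $P(i) + P(k-i+1) = k+1 \geq k = P(k)$; hence by Theorem~\ref{zone no double Thm Eig1 incr} every zone tariff without double counting w.r.t.\ $P$ satisfies the no-stopover property, and by Theorem~\ref{zone no double Thm Eig2} it also satisfies the no-elongation property.

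Given an instance of MZP, i.e.\ a PTN with a zone partition (no empty zones on edges), stations $x,y$ and an integer $K \in \N_{\geq 1}$, I would keep the \emph{same} PTN and zone partition and equip it with the zone tariff without double counting w.r.t.\ the fixed $P$ above; this is clearly computable in polynomial time. By Lemma~\ref{zone-no-double-cheapest-path} a cheapest $x$-$y$-path exists; let $m$ be the minimum number of different zones visited by any $x$-$y$-path. By Lemma~\ref{zone no double Cor Eig3} (the strictly increasing case), an $x$-$y$-path is a cheapest path precisely when it visits $m$ different zones, so the cheapest standard ticket between $x$ and $y$ has price $P(m) = m$. Since $P$ is strictly increasing and satisfies~\eqref{eq-nostopzone}, both the no-stopover and the no-elongation property hold, so Theorem~\ref{Prelim-thm} shows this cheapest standard ticket is in fact a cheapest ticket; hence the minimum price over \emph{all} tickets from $x$ to $y$ equals $m$.

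Setting the threshold to $t \defeq K$, I would conclude: there is a ticket from $x$ to $y$ of price at most $K$ if and only if $m \leq K$, i.e.\ if and only if there is an $x$-$y$-path visiting at most $K$ different zones, which is exactly the MZP question. This is a polynomial-time many-one reduction from MZP to the decision version of the cheapest ticket problem, so the latter is NP-hard even under the stated restrictions. The only place that needs genuine care is the first step --- choosing the price function so that strict monotonicity and the no-stopover property hold simultaneously, which is what forces the cheapest ticket problem to coincide with the ``minimum number of different zones'' problem and thereby inherit its hardness; everything after that is a direct application of the preceding lemmas.
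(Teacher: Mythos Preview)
Your proposal is correct and follows essentially the same approach as the paper: both argue that under a strictly increasing price function with the no-stopover property, the cheapest ticket problem coincides (via Lemma~\ref{zone no double Cor Eig3} and Theorem~\ref{Prelim-thm}) with minimizing the number of different zones, and then invoke Theorem~\ref{zone no double Thm NP-complete}. The paper's proof is terser and does not spell out a concrete price function, whereas you explicitly fix $P(k)=k$ and verify~\eqref{eq-nostopzone}; this extra step is a perfectly sensible way to make the reduction self-contained.
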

\begin{proof}
	Solving the cheapest ticket problem with a strictly increasing price function that satisfies the no-stopover property is equivalent to finding a path which traverses a minimum number of zones (Lemma~\ref{zone no double Cor Eig3}). 
	Hence, the result follows from Theorem~\ref{zone no double Thm NP-complete}.
\end{proof}  

We now investigate special cases in which the problem can nevertheless be solved in polynomial time.

\begin{lemma} \label{zone-no-double-special-1}
	Let a PTN with a zone partition $\cal Z$ be given.
	If all zones $Z \in \cal Z$ are connected, then MZP can be solved in polynomial time.
\end{lemma}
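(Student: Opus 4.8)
The plan is to reduce MZP to a shortest-path computation in the \emph{zone graph} $H$: $H$ has one node for each zone of $\cZ$, and an edge $\{Z,Z'\}$ whenever some edge of the PTN joins a station of $Z$ to a station of $Z'$. Equivalently, $H$ is obtained from the PTN by contracting every edge $e$ with zone border weight $b(e)=0$; since each zone is connected, this collapses each zone to a single node. Building $H$ is clearly polynomial, and $\vert V(H)\vert=\vert\cZ\vert\le\vert V\vert$, $\vert E(H)\vert\le\vert E\vert$. Throughout I identify the singleton $\zones(v)$ with the corresponding node of $H$.

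The key claim is: for $x,y\in V$, the minimum number of \emph{distinct} zones visited by an $x$-$y$-path in the PTN equals the minimum number of \emph{nodes} on a path in $H$ from $\zones(x)$ to $\zones(y)$. First I would prove the direction ``$\le$'', which is where connectedness is used. Given a node-minimal path $Z_0=\zones(x),Z_1,\dots,Z_m=\zones(y)$ in $H$, choose for each $j$ an edge $\{u_j,w_{j+1}\}\in E$ of the PTN with $u_j\in Z_j$ and $w_{j+1}\in Z_{j+1}$ (possible since $Z_j,Z_{j+1}$ are adjacent in $H$). Because each zone induces a connected subgraph of the PTN, there is a path inside $Z_j$ from the ``entry'' $w_j$ to the ``exit'' $u_j$ (and inside $Z_0$ from $x$ to $u_0$, inside $Z_m$ from $w_m$ to $y$). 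Concatenating these intra-zone paths with the connecting edges yields an $x$-$y$-path in the PTN, and since every intra-zone piece stays inside its own zone, the set of zones it visits is exactly $\{Z_0,\dots,Z_m\}$, of size $m+1$; here it matters that the paper's paths need not be simple, so this concatenation is a legal path. For ``$\ge$'', take any $x$-$y$-path $W$ in the PTN, read off the sequence $\zones(x_1),\dots,\zones(x_n)$, and delete consecutive repetitions: this is a walk in $H$ from $\zones(x)$ to $\zones(y)$ whose vertex set is exactly the set of zones visited by $W$, and any walk in $H$ contains a simple path between its endpoints on a subset of its vertices. Hence $H$ has an $x$-$y$ zone-path on at most $\bar z(W)$ nodes.

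With the claim established, MZP is decided as follows: if $\zones(x)=\zones(y)$ the answer is ``yes'' iff $K\ge 1$; otherwise run breadth-first search in $H$ from $\zones(x)$ and answer ``yes'' iff the distance to $\zones(y)$ is at most $K-1$ (a shortest path on $\ell$ edges has $\ell+1$ nodes). Connectedness of the PTN guarantees $\zones(x)$ and $\zones(y)$ lie in the same component of $H$, so the search succeeds. Constructing $H$ and running BFS are both polynomial in $\vert V\vert+\vert E\vert$, which proves the lemma.

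I expect the direction ``$\le$'' — realizing an abstract zone-path as an actual PTN path — to be the only substantive step: everything rests on exploiting connectedness of each zone to splice in the intra-zone detours, and on the fact that the resulting (possibly non-simple) walk is admitted as a path by the paper's definition. The remaining parts are routine graph bookkeeping.
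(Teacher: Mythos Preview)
Your proof is correct and rests on the same key idea as the paper's: connectedness of each zone lets you reroute within a zone, so that minimizing the number of \emph{distinct} zones reduces to a shortest-path computation. The paper packages this slightly differently---it argues by a replacement step that an optimal path can be assumed never to revisit a zone, and then applies Algorithm~\ref{Alg zone} (shortest path in the PTN with zone-border weights $b$) rather than building the zone graph $H$ explicitly---but since $H$ is precisely the contraction of all $b(e)=0$ edges (as remarked at the end of Section~\ref{section basic zone}), the two reductions are equivalent.
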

\begin{proof}
	Let the path $W$ be a solution to MZP.
	Assume there exists a zone $Z$ which is traversed twice by $W\!$, i.e., $W$ contains a subpath $W'=(x,y_1,\ldots,y_k,x')$, $k\geq 1$, with $x,x' \in Z$ and $y_i \not\in Z$ for all $i \in \{1,\ldots,k\}$. 
	Since $Z$ is connected there exists a path $W''$ from $x$ to $x'$ which does not leave $Z$.
	Replacing $W'$ by $W''$ in $W$ hence traverses at most as many different zones as $W\!$.
	Therefore, we do not need to store the sets of traversed zones in this case, but it is enough to find a path crossing a minimal number of zone borders.
	This means that MZP can be solved in polynomial time by a shortest path algorithm in the PTN with the zone border weight $b$ from the basic zone tariff as edge weight (Algorithm~\ref{Alg zone}).
\end{proof}

In the following we generalize this result and allow even one zone which is not connected. More precisely,
we consider a PTN with a zone tariff with single counting with respect to an increasing price function~$P\!$. 
Let $b$ be the zone border weight as for the basic zone tariff.
We suppose that at most one zone $Z$ is disconnected and decomposes into $k$ components.
In this case, Algorithm~\ref{Alg no double 2} computes a cheapest path with single counting.

\begin{algorithm}
	\caption{Special case of a zone tariff with single counting: finding a cheapest path.}
	\label{Alg no double 2}
	\SetKwInOut{Input}{Input}
	\SetKwInOut{Output}{Output}
	
	\Input{PTN $(V,E)$ with a zone partition $\cZ$, i.e., zone border weights $b$, in which all zones but one zone $Z$ are connected and $Z$ decomposes into $k$ connected components, two stations $x,y \in V$} 
	\Output{$x$-$y$-path $W$}
	
	Create a directed graph $G = (V,A)$ by replacing each edge in the PTN by two directed arcs, one for each direction.
	For every arc $(v,w) \in A$, let the weight $\bar{b}(v,w)$ be given by 
	\[ \bar{b}(v,w)\defeq \begin{cases}
	\frac{1}{k} 	&\text{if }  w \in Z \text{ and } v \notin Z,\\
	b(v,w)	&\text{otherwise.}
	\end{cases}\]\\
	Compute a shortest $x$-$y$-path $W$ in $G$.\\
	\Return $W$
\end{algorithm}

\begin{lemma}
	Algorithm~\ref{Alg no double 2} yields a cheapest path and hence a cheapest standard ticket in polynomial time for the special case of an increasing price function and at most one disconnected zone. 
	
	In particular, if \eqref{eq-nostopzone} holds, then Algorithm~\ref{Alg no double 2} yields a cheapest ticket.
\end{lemma}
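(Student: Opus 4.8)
The plan is to show that the weighted shortest path computed in the directed graph $G$ with weights $\bar b$ exactly captures the number of different zones visited, so that a shortest path in $G$ corresponds to a cheapest path for the zone tariff without double counting. The key idea is that only the single disconnected zone $Z$ can be visited more than once along a minimum-cost path, and entering $Z$ costs $\frac{1}{k}$ each time rather than $1$, so that visiting $Z$ up to $k$ times contributes at most $1$ to the total weight. First I would argue, exactly as in the proof of Lemma~\ref{zone-no-double-special-1}, that for any path $W$ we may assume that every connected zone other than $Z$ is visited at most once: if a connected zone $Z' \neq Z$ were visited twice, we could reroute the intermediate subpath inside $Z'$ without increasing the number of different zones, and this rerouting does not increase the $\bar b$-weight either (all arcs inside $Z'$ have weight $0$, and any arc entering $Z'$ has $\bar b$-weight $1$, which is not smaller than before). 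Hence it suffices to compare, among paths whose only possibly-repeated zone is $Z$, the $\bar b$-weight against $\bar z(W)-1$.

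Next I would establish the correspondence between $\bar b$-weight and $\bar z(W)$. Consider such a path $W$ from $x$ to $y$. Walking along $W$, each arc that crosses a zone border and enters a zone $Z' \neq Z$ (from outside $Z'$) is encountered exactly once, because $Z'$ is entered only once; such an arc has $\bar b$-weight $b = 1$. Each of the (at most $k$) times $W$ enters $Z$ from outside, the corresponding arc has $\bar b$-weight $\frac{1}{k}$; if $W$ enters $Z$ exactly $j \le k$ times, these contribute $\frac{j}{k} \le 1$, and contribute a strictly positive amount exactly when $Z$ is visited at all. Arcs that stay within a zone have weight $0$. Therefore, writing $\bar z(W)$ for the number of different zones visited, the total $\bar b$-weight of $W$ equals $(\bar z(W) - 1)$ if $W$ does not visit $Z$, and lies in the half-open interval $(\,\bar z(W) - 2,\ \bar z(W) - 1\,]$ if $W$ visits $Z$. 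In either case, $\lceil \bar b(W) \rceil = \bar z(W) - 1$, and more importantly, the ordering is preserved in the following sense: if $W$ and $W'$ both avoid double-visiting any zone except possibly $Z$, then $\bar b(W) \le \bar b(W')$ whenever $\bar z(W) \le \bar z(W')$, and a path minimizing $\bar b$ also minimizes $\bar z$. Here one must be slightly careful: a path with $\bar z(W) = m+1$ that does visit $Z$ has weight $> m-1$, while a path with $\bar z(W') = m$ not visiting $Z$ has weight $m - 1$; since the former strictly exceeds $m-1$ only... no — in fact the former could be as small as just above $m-1$ while the latter is exactly $m-1$, so the weight-$(m-1)$ path still wins, which is consistent with $\bar z(W') = m \le m+1 = \bar z(W)$. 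One checks the remaining sub-cases analogously, the cleanest route being to observe $\lfloor \bar b(W) \rfloor + 1 \le \bar z(W) \le \lceil \bar b(W) \rceil + 1$ with equality $\bar z(W) = \lceil \bar b(W)\rceil + 1$ always holding, and $\lceil \cdot \rceil$ being monotone.

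From this, the shortest $x$-$y$-path $W$ in $G$ minimizes $\bar z(W)$ over all $x$-$y$-paths (after reducing to the normalized form via the rerouting argument, which does not change the achievable minimum of $\bar z$). By Lemma~\ref{zone no double Cor Eig3}, a path minimizing the number of different zones is a cheapest path, hence $T = (W)$ is a cheapest standard ticket. For the running time, $k \le |V|$, the weights $\bar b$ are nonnegative rationals with denominator $k$, and $G$ has $|V|$ nodes and $2|E|$ arcs, so Dijkstra's algorithm runs in polynomial time; scaling all weights by $k$ makes them integral if one prefers. Finally, if \eqref{eq-nostopzone} holds, then by Theorem~\ref{zone no double Thm Eig1 incr} the tariff satisfies the no-stopover property, and since $P$ is increasing it satisfies the no-elongation property by Theorem~\ref{zone no double Thm Eig2}; hence by Theorem~\ref{Prelim-thm} the cheapest standard ticket $T = (W)$ is also a cheapest ticket.

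The main obstacle is the second step: verifying that minimizing the fractionally-weighted $\bar b$ really does minimize $\bar z$ in \emph{all} sub-cases, i.e., that a path which cleverly re-enters $Z$ many times to exploit the $\frac{1}{k}$ discount cannot "beat" a genuinely zone-fewer path. The bound $\frac{j}{k} \le 1$ is exactly what prevents this — the cumulative discount from revisiting $Z$ never saves a full unit — and making that comparison airtight (handling ties and the boundary when $Z$ is or isn't visited) is where the real care is needed.
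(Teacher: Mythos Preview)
Your approach is essentially the paper's: relate $\bar b(W)$ to $\bar z(W)$ by a ceiling-type formula and then use monotonicity. However, your key formula $\lceil \bar b(W)\rceil = \bar z(W)-1$ (equivalently, $\bar b(W)\in(\bar z(W)-2,\bar z(W)-1]$ when $Z$ is visited) is \emph{false} when the start node $x$ lies in $Z$ and $W$ re-enters $Z$. Concretely: if $x\in Z$, the path visits $r$ non-$Z$ zones (each exactly once after rerouting) and re-enters $Z$ exactly $j$ times with $0\le j\le k-1$, then $\bar z(W)=r+1$ but $\bar b(W)=r+\tfrac{j}{k}$, so for $j\ge 1$ one gets $\lceil\bar b(W)\rceil = r+1 = \bar z(W)$, not $\bar z(W)-1$. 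The slip is that you implicitly assume the starting zone is not $Z$, so that every non-$Z$ zone contributes exactly one ``entry'' arc and every visit to $Z$ is an entry from outside; neither holds when $x\in Z$.

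The paper repairs this with an explicit case split on the start node:
\[
\bar z(W)=\begin{cases}\lceil \bar b(W)+1\rceil & \text{if } x\notin Z,\\[2pt] \lceil \bar b(W)+\tfrac{1}{k}\rceil & \text{if } x\in Z.\end{cases}
\]
The crucial point, which you do not use because you believed you had a uniform formula, is that in the comparison step the competing path $W'$ shares the same start node $x$ as $W$; hence both fall into the \emph{same} case, and then monotonicity of $\lceil\cdot\rceil$ gives $\bar z(W)\le\bar z(W')$ from $\bar b(W)\le\bar b(W')$. You need this ``same start node'' observation once the uniform formula is abandoned. Also, your bound $j\le k$ requires rerouting within each connected \emph{component} of $Z$ as well, not only within the non-$Z$ zones; this is easy (each component is connected, arcs inside have weight $0$) but should be stated. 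The rest of your argument---the rerouting step, the polynomial running time, and the appeal to Theorems~\ref{zone no double Thm Eig1 incr}, \ref{zone no double Thm Eig2} and~\ref{Prelim-thm} for the cheapest-ticket claim---matches the paper and is correct.
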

\begin{proof}
	Note that a path in $G$ can be understood as a path in the PTN and vice versa.
	Let~$W$ be a path from $x$ to $y$ computed by Algorithm~\ref{Alg no double 2}.
	Since $W$ is a shortest path regarding $\bar{b}$, it does not traverse a zone in $\cZ\setminus \{Z\}$ more than once due to connectedness as in the proof of Lemma~\ref{zone-no-double-special-1}.
	Therefore, if $W$ has weight $\bar{b}(W)=m$ in~$G$, it holds that
	\begin{equation}
	\bar{z}(W)=\begin{cases}
	\lceil m+1 \rceil &\text{if } x \notin Z,\\
	\lceil m+\frac{1}{k} \rceil &\text{if } x \in Z,
	\end{cases}  \label{eq-nodouble}
	\end{equation}
	because a shortest path $W$ traverses $Z$ at most $k$ times.
	From that, we can conclude that $W$ traverses a minimum number of different zones:
	Assume there is a path $W'$ which traverses fewer zones than $W\!$, i.e., $\bar{z}(W')<\bar{z}(W)$.
	Regarding the proof of Lemma~\ref{zone-no-double-special-1}, we can assume that $W'$ does not traverse a zone in $\cZ\setminus\{Z\}$ more than once, and hence \eqref{eq-nodouble} holds for~$W'$.
	Since $W$ is a shortest path regarding $\bar{b}$, we have $\bar{b}(W) \leq \bar{b}(W')$.
	Because $W$ and $W'$ have the same start node and due to \eqref{eq-nodouble} and monotonicity of the ceiling function, this is a contradiction to~$W'$ traversing less zones than $W\!$.
	
	The graph $G$ can be constructed in polynomial time and the runtime of shortest path algorithms, e.g., Dijkstra, is polynomial, hence the overall runtime is polynomial as well.
	
	If the price function is increasing and \eqref{eq-nostopzone} holds, then the no-stopover and no-elongation property are satisfied by Theorems~\ref{zone no double Thm Eig1 incr} and~\ref{zone no double Thm Eig2}.
	Hence, a cheapest standard ticket is a cheapest ticket by Theorem~\ref{Prelim-thm}.
\end{proof}

\section{Combined Fare Structures} \label{chapter combined}

Real-world public transport fares are often very complex, and usually not only one but several different fare structures are implemented within the same linked transport system.
Therefore, we have a look at combinations of two fare structures both being available in the same geographical region. In this case, passengers choose the cheapest available tickets.
We start with the definition of a combined fare structure and some general properties before we discuss two cases of combined fare structures: the bounded distance tariff and a combination of a zone tariff and a short-distance tariff.

\subsection{General Properties of Combined Fare Structures} \label{sec-combined}

\begin{definition}\label{combined Definition}
	Let a PTN be given, and let $p_1$ and $p_2$ be two fare structures.
	A \emph{combined fare structure} $p$ of $p_1$ and $p_2$ is defined by
	$ p(W) \defeq \min \{p_1(W),p_2(W) \} $
	for all $W \in \W$.
\end{definition}

One could think that if both fare structures satisfy a property, then so does their combined fare structure.
While this is indeed true for the no-elongation property, it is not true for the no-stopover property.

\begin{example}\label{combined Ex Eig1}
	The no-stopover property does not transfer to the combined fare structure. 
	To see this, we consider the PTN shown in Figure~\ref{Figure combined Lemma Eig1} with $l(x_1,x_2) = l(x_2,x_3)=2$, where we omit virtual nodes to simplify the presentation.
	Let $p_1$ be the basic zone tariff with respect to a linear price function ${P \colon \N_{\geq 1} \to \R_{\geq 0}},\ k \mapsto k$, and let $p_2$ be a distance tariff with an affine price function with $f =0$ and $\overline{p} = 1$, i.e., for $W\in \W$ we have $p_1(W) = z(W)$ and $p_2(W)=l(W)$.
	By Theorem~\ref{distance-eig1} and Example~\ref{zone Ex no-stopover}, both fare structures satisfy the no-stopover property.
	We define the paths ${W \defeq (x_1,x_2,x_3)}$, ${W_1 \defeq (x_1,x_2)}$ and $W_2 \defeq (x_2,x_3)$.
	The resulting costs are presented in Table~\ref{Table combined Lemma Eig1}.
	The no-stopover property is not satisfied for the combined fare structure because $p(W_1)+p(W_2) = 3 < 4 = p(W)$.
\end{example}

\begin{figure}[b]
	\centering
	\includegraphics[scale=1]{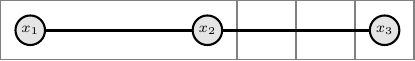}
	\caption{PTN with zones for Example~\ref{combined Ex Eig1}.}
	\label{Figure combined Lemma Eig1}	
\end{figure}

\begin{table}[t]
	\caption{Prices of the paths regarding the different fare structures from Example~\ref{combined Ex Eig1}.}
	\label{Table combined Lemma Eig1}
	\centering
	\begin{tabular}{|c|c|c|c|} \hline
		&	$W_1$	&	$W_2$	&	$W$	\\ \hline \hline
		$p_1$	&	1		&	4		&	4	\\ \hline	
		$p_2$	&	2		&	2		&	4	\\ \hline
		$p$		&	1		&	2		&	4	\\ \hline
	\end{tabular}
\end{table}

\begin{theorem}\label{combined Thm Eig2}
	Let $p_1$ and $p_2$ be two fare structures which satisfy the no-elongation property.
	Then their combined fare structure $p$ satisfies the no-elongation property as well.
\end{theorem}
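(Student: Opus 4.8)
The plan is to unwind the two definitions and exploit the monotonicity of the pointwise minimum. Fix a path $W=(x_1,\ldots,x_n)\in\W$ with $n\geq 2$, and write $W'=[x_1,x_{n-1}]$ for the prefix subpath obtained by deleting the last station; by Definition~\ref{combined Definition} the claim to be shown is $p(W')\leq p(W)$, i.e.\ $\min\{p_1(W'),p_2(W')\}\leq\min\{p_1(W),p_2(W)\}$.

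First I would pick an index $i\in\{1,2\}$ that attains the minimum defining $p(W)$, so that $p(W)=p_i(W)$. Since $p_i$ satisfies the no-elongation property and $W'$ is exactly the path $(x_1,\ldots,x_{n-1})$, we get $p_i(W')\leq p_i(W)$. Combining this with the trivial inequality $p(W')=\min\{p_1(W'),p_2(W')\}\leq p_i(W')$ coming straight from the definition of $p$, we obtain the chain
\[
p(W')\ \leq\ p_i(W')\ \leq\ p_i(W)\ =\ p(W),
\]
which is precisely the no-elongation inequality for $p$.

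There is essentially no obstacle here: the argument is just the observation that the componentwise minimum of two order-preserving maps (from the prefix-subpath relation to $\R_{\geq 0}$) is again order-preserving, and it works verbatim for the minimum of any finite family of fare systems. The only point worth flagging in the surrounding text is the contrast with the no-stopover property: there one would need to bound $p(W_1+W_2)$ by $p(W_1)+p(W_2)$, but the index attaining the minimum for $W_1$ may differ from the one attaining it for $W_2$, so the subadditivity-style inequalities of $p_1$ and $p_2$ cannot be combined — which is exactly why Example~\ref{combined Ex Eig1} can fail while the present statement holds.
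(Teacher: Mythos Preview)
Your proof is correct and essentially matches the paper's argument: both reduce to the observation that $p(W')\leq p_i(W')\leq p_i(W)$ for each $i$, hence $p(W')\leq\min\{p_1(W),p_2(W)\}=p(W)$. The only cosmetic difference is that the paper records the bound for both $i=1$ and $i=2$ before taking the minimum, while you pick the minimizing index up front; the content is the same.
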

\begin{proof}
	Let $W = (x_1,\ldots,x_n) \in \W$ with $n \geq 2$ be given and set $W_1 \defeq [x_1,x_{n-1}]$.
	Due to the no-elongation property for $p_1$ and $p_2$, it holds that
	\[ p(W_1) = \min \{p_1(W_1),p_2(W_1)\} \leq 
	\begin{cases}
		p_1(W_1) \leq p_1(W),\\
		p_2(W_1) \leq p_2(W).
	\end{cases}       \]
	Hence, it holds that
	\[ p(W_1) = \min \{p_1(W_1),p_2(W_1)\} \leq \min \{ p_1(W) , p_2(W) \} = p(W)\]
	and $p$ satisfies the no-elongation property.
\end{proof}

We end the section with the following simple, but algorithmically relevant observation:
\begin{equation} \label{eq-comb-cheapest}
	\min_{x\text{-}y\text{-path } W} \, p(W) = \min_{x\text{-}y\text{-path } W}\, \min_{p'\in \{p_1,p_2\}} p'(W) = \min_{p'\in \{p_1,p_2\}} \, \min_{x\text{-}y\text{-path } W} \, p'(W). 
\end{equation}

This means that we can find a cheapest path and hence a cheapest standard ticket by comparing the cheapest path w.r.t.\ $p_1$ and the cheapest path w.r.t.\ $p_2$ and choosing the better of the two. 
This is stated in Algorithm~\ref{algo-combined}.

\begin{algorithm}
	\caption{Combined fare structure: finding a cheapest path.}
	\label{algo-combined}
	\SetKwInOut{Input}{Input}
	\SetKwInOut{Output}{Output}
	\SetKwFunction{sdp}{SDP}
	\SetKw{AND}{and}
	
	\Input{PTN $(V,E)$, combined fare structure $p$ of $p_1$ and $p_2$, two stations $x, y \in V$}
	\Output{$x$-$y$-path $W$}
	
	Compute a cheapest path $W_1$ w.r.t.\ $p_1$.\\
	Compute a cheapest path $W_2$ w.r.t.\ $p_2$.\\
	\eIf{$p_1(W_1) \leq p_2(W_2)$}{
		\Return  $W_1$
	}{
		\Return $W_2$
	}
\end{algorithm}

\subsection{Bounded Distance Tariff} \label{sec-bounded}
The price of a journey can usually not become arbitrarily large, i.e., there is an upper bound on the price.
This can be modeled as a combined fare structure with the help of flat tariffs.
Consider, for example, a distance tariff with an affine price function $p_1(W)=f+\overline{p}_1 \cdot l(W)$ with $\overline{p}_1>0$ and a flat tariff $p_2(W)=\overline{p}_2$ for $W\in \W$.
The \emph{bounded distance tariff} as the combined fare structure of a distance tariff and a flat tariff is then given by 
\begin{equation*}
	p(W) = \min \{p_1(W),p_2(W)\}=\begin{cases}
		p_1(W) = f+\overline{p}_1\cdot l(W) 			& \text{if } l(W) \leq \frac{\overline{p}_2 -f}{\overline{p}_1},\\
		p_2(W) = \overline{p}_2							& \text{otherwise.}
	\end{cases}
\end{equation*}

This can be regarded as a distance tariff with the price function
\begin{equation*}
	P(l(W)) =\begin{cases}
		f+\overline{p}_1\cdot l(W) 			& \text{if } l(W) \leq \frac{\overline{p}_2 -f}{\overline{p}_1},\\
		\overline{p}_2						& \text{otherwise,}
	\end{cases}
\end{equation*}
which yields the following result:

\begin{theorem} \label{bounded-distance-Thm-Eig12}
	A bounded distance tariff satisfies the no-stopover and the no-elongation property.
\end{theorem}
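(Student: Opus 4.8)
The plan is to derive both properties by combining the general facts we already have about each component tariff with Theorem~\ref{combined Thm Eig2} and Example~\ref{combined Ex Eig1}. A bounded distance tariff is, by definition, the combined fare system $p = \min\{p_1, p_2\}$ of a distance tariff $p_1(W) = f + \overline{p}_1 \cdot l(W)$ and a unit (flat) tariff $p_2(W) = \overline{p}_2$. The no-elongation part is essentially immediate: by Theorem~\ref{distance Thm} the distance tariff $p_1$ satisfies the no-elongation property, and a unit tariff is a distance tariff with $\overline{p} = 0$, so it too satisfies the no-elongation property by the same theorem. Then Theorem~\ref{combined Thm Eig2} hands us the no-elongation property for $p$ directly.

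For the no-stopover property, the subtlety is exactly the one flagged by Example~\ref{combined Ex Eig1}: the no-stopover property does \emph{not} transfer through $\min$ in general, so I would argue directly from the closed-form description of $p$. Fix a path $W = (x_1,\ldots,x_n)$ with $n \geq 3$ and an intermediate station $x_i$, and write $W_1 = [x_1,x_i]$, $W_2 = [x_i,x_n]$. I want $p(W) \leq p(W_1) + p(W_2)$. The key observation is that $p(W) \leq p_2(W) = \overline{p}_2$ always, and also $p(W) \leq p_1(W) = f + \overline{p}_1(l(W_1) + l(W_2))$ using additivity of length. I would then do a short case analysis on which branch each of $p(W_1)$, $p(W_2)$ lies in. If both equal the distance price, then $p(W_1) + p(W_2) = 2f + \overline{p}_1 l(W) \geq f + \overline{p}_1 l(W) = p_1(W) \geq p(W)$, since $f \geq 0$. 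If at least one of them equals $\overline{p}_2$, then that summand alone is $\geq p(W)$ because $p(W) \leq p_2(W) = \overline{p}_2$, and the other summand is nonnegative, so the inequality holds a fortiori.

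The step I expect to carry the weight is this last case distinction — specifically making sure the bound $p(W) \leq \overline{p}_2$ is used correctly when a \emph{sub}path is priced by the unit branch: the point is that the unit price $\overline{p}_2$ is a constant independent of the path, so "$W_j$ hits the flat cap" immediately gives a summand $\overline{p}_2 \geq p(W)$ regardless of how long $W$ itself is. Once that is spelled out, nothing else is needed; I would not invoke Theorem~\ref{Prelim-thm} here, since the statement is just about the two properties themselves, though of course it follows that cheapest standard tickets are cheapest tickets for a bounded distance tariff. An alternative, slicker route would be to note that $p(W) = g(l(W))$ where $g(\ell) = \min\{f + \overline{p}_1 \ell,\ \overline{p}_2\}$ is a nondecreasing, subadditive function of $\ell$ on $\R_{\geq 0}$ (nondecreasing is clear; subadditivity of $g$ follows because the minimum of two nondecreasing subadditive functions—an affine function with nonnegative slope and nonnegative intercept, and a constant—is subadditive), and then both properties drop out of monotonicity and subadditivity of $g$ together with additivity of $l$; I would likely present the direct case analysis as the main argument and mention this reformulation as a remark.
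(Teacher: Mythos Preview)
Your proposal is correct and follows essentially the same approach as the paper: the no-elongation property is obtained by combining Theorem~\ref{distance Thm} (applied to both the distance and the unit tariff) with Theorem~\ref{combined Thm Eig2}, and the no-stopover property is handled by the same two-case split on whether some $p(W_j)$ equals the flat cap $\overline{p}_2$ or both subpaths are priced by the distance branch. The paper's write-up is slightly terser (it observes that if some $p(W_i)=\overline{p}_2$ then in fact $p(W)=\overline{p}_2$, not just $p(W)\le\overline{p}_2$), but the logic is the same; your added remark about $g(\ell)=\min\{f+\overline{p}_1\ell,\overline{p}_2\}$ being monotone and subadditive is a nice reformulation, though be careful not to phrase it as a general fact about minima of subadditive functions.
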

\begin{proof}
	Since $P$ is a subadditive and increasing function, the no-stopover and no-elongation property are satisfied by Theorems~\refeq{distance-eig1} and~\ref{distance-eig2}.
	The no-elongation property can also be proved by Theorems~\ref{combined Thm Eig2} and~\ref{distance-eig2}.
\end{proof}

A cheapest standard ticket and thus by Theorems~\ref{bounded-distance-Thm-Eig12} and~\ref{Prelim-thm} also a cheapest ticket can be computed with a shortest path algorithm as argued in Section~\ref{sec-distance}.
Due to~\eqref{eq-comb-cheapest}, this can also be done with Algorithm~\ref{algo-combined}.

\begin{corollary}
	For a bounded distance tariff, a cheapest standard ticket and a cheapest ticket can be computed in polynomial time.
\end{corollary}

\subsection{Basic Zone Tariff Combined with a Short-distance Tariff}
\label{section-zone-short}

A ticket option that we have not studied yet and which is only implemented in combination with another fare structure is the \emph{short-distance tariff}.
Given a fare structure, e.g., a flat tariff or a zone tariff, a short-distance tariff adds a new ticket option for very short journeys with respect to the length and the number of stations.
It is designed to make such short journeys more attractive.
Since a short-distance ticket can only be bought for a subset of all paths, we define the short-distance tariff as a formal fare structure, i.e., we allow an infinite price which is not possible in practice.

For the following definition, we denote by $s(W)$ the number of stations of a path $W$ except
the start station, i.e., its number of edges.
As before, $l(W)$ is the length of a path $W\!$.

\begin{definition}
	Let a PTN be given.
	A (formal) fare structure $p$ is called a \emph{short-distance tariff} w.r.t.\ a price $P_S\in \R_{\geq 0}$ and upper bounds $\smax \in \N_{\geq 1}\cup \{ \infty \}$ and $\lmax \in \R_{>0} \cup \{ \infty \}$, where at least one is finite, if
	\[ p(W) = \begin{cases}
		P_S 			& \text{if } s(W) \leq \smax \text{ and } l(W)\leq \lmax,\\
		\infty		 	& \text{if } s(W) > \smax \text{ or } l(W)> \lmax
	\end{cases} \]
	for all paths $W \in \W$.
\end{definition}

Here, $\smax$ is an upper bound on the number of stations and $\lmax$ is an upper bound on the length of the path that are allowed for using the short-distance tariff.
We call a path $W$ a \emph{short-distance path} if $s(W)\leq \smax$ and $l(W)\leq \lmax$.
This results in the reformulation
\[ p(W) = \begin{cases}
	P_S 			& \text{if } W \text{ is a short-distance path,}\\
	\infty		 	& \text{otherwise.}
\end{cases} \]

We allow $\smax = \infty$ or $\lmax= \infty$ in order to represent the situation where a short-distance path is only restricted by the number of stations or its length, but not by both.
We first observe what cheapest paths for a short-distance tariff look like. 

\begin{lemma}\label{sd Lemma Eig3}
	Let $p$ be a short-distance tariff and $x,y \in V\!$.
	If there is a short-distance path from $x$ to $y$, then this is a cheapest path.
	If there is none, then any $x$-$y$-path is a cheapest path.
\end{lemma}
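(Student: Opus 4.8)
The plan is to exploit the extremely simple structure of the short-distance tariff: it takes only the two values $P_S$ and $\infty$, so a cheapest $x$-$y$-path is simply any path attaining the minimum of $p$ over all $x$-$y$-paths, and that minimum is $P_S$ precisely when at least one short-distance $x$-$y$-path exists, and $\infty$ otherwise. I would split into the two stated cases and argue each directly.

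First, suppose there is a short-distance path $W^{\star}$ from $x$ to $y$, i.e.\ $s(W^{\star})\le \smax$ and $l(W^{\star})\le \lmax$. Then $p(W^{\star})=P_S$ by definition, and for every $x$-$y$-path $W$ we have $p(W)\in\{P_S,\infty\}$, hence $p(W)\ge P_S = p(W^{\star})$. Therefore $W^{\star}$ minimizes $p$ over all $x$-$y$-paths, i.e.\ it is a cheapest path; note this already shows that \emph{every} short-distance $x$-$y$-path is a cheapest path. Second, suppose there is no short-distance path from $x$ to $y$. Then every $x$-$y$-path $W$ violates $s(W)\le\smax$ or $l(W)\le\lmax$, so $p(W)=\infty$ for all such $W$. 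Consequently all $x$-$y$-paths have the same (infinite) price and each of them is trivially a cheapest path. Since the PTN is connected, at least one $x$-$y$-path exists in either case, so the statement is non-vacuous.

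There is essentially no obstacle here: the only thing to be slightly careful about is the handling of the formal value $\infty$, namely that ``cheapest'' is understood as minimizing $p$ over the (finite-valued-or-$\infty$) set of $x$-$y$-path prices, and that a minimum always exists because the price set is a subset of $\{P_S,\infty\}$, which is finite. One could also remark, for use in the later algorithmic discussion, that deciding which case applies reduces to checking whether some $x$-$y$-path satisfies both bounds, but that is not needed for the lemma itself.
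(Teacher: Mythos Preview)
Your argument is correct. The paper actually states this lemma without proof, treating it as self-evident from the two-valued structure of the short-distance tariff; your case split on whether a short-distance $x$-$y$-path exists is exactly the intended (trivial) reasoning.
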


We next investigate the no-stopover and the no-elongation property.

\begin{lemma}\label{sd Lemma Eig1}
	For all bounds $\smax$ and $\lmax$, there is a PTN so that the induced short-distance tariff with respect to any price $P_S$ does not satisfy the no-stopover property.
\end{lemma}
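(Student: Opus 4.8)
The plan is to construct, for any given bounds $\smax$ and $\lmax$ (at least one finite), a PTN on which the short-distance tariff fails the no-stopover property regardless of the price $P_S$. The idea is that a short-distance ticket is finite on a long enough path only if that path stays within both bounds, but can always be split into pieces that each fit. So I would build a path $W = (x_1, \ldots, x_n)$ that is itself \emph{not} a short-distance path — it violates at least one of the two bounds — yet can be partitioned into two subpaths each of which \emph{is} a short-distance path. Then $p(W) = \infty$ while $p(W_1) + p(W_2) = 2P_S < \infty$, so the no-stopover inequality $p(W) \leq p(W_1) + p(W_2)$ fails.

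The construction splits into the two natural cases. If $\smax < \infty$, take a path with exactly $2\smax$ edges, say $x_1, \ldots, x_{2\smax + 1}$, assigning each edge a tiny length so that the whole path has length well below $\lmax$ if $\lmax$ is finite (and length is irrelevant if $\lmax = \infty$); then $s(W) = 2\smax > \smax$, so $W$ is not a short-distance path, but splitting at the midpoint $x_{\smax+1}$ gives $W_1, W_2$ each with $\smax$ edges and small length, hence each a short-distance path. If $\smax = \infty$ (so $\lmax < \infty$), take a path with just two edges, each of length, say, $\frac{2}{3}\lmax$, so that $l(W) = \frac{4}{3}\lmax > \lmax$ while each single-edge subpath has length $\frac{2}{3}\lmax \leq \lmax$ and one edge, hence is a short-distance path. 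In both cases $P_S$ plays no role: the standard ticket costs $\infty$ and the compound ticket costs $2P_S$.

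The only subtlety — and it is minor — is reconciling the two bounds simultaneously when both are finite: one must make sure the path that exceeds the station bound does not accidentally also satisfy or violate the length bound in a way that breaks the argument, and vice versa. This is handled simply by choosing the edge lengths free: in the $\smax$-driven case, pick edge lengths $\varepsilon > 0$ small enough that $2\smax \cdot \varepsilon \leq \lmax$ (possible since $\lmax > 0$), so the subpaths comfortably satisfy the length bound while the station count does the work. Because the PTN is allowed to be any simple connected graph and we are only asked for existence of \emph{some} PTN, there are no further constraints to satisfy. I expect no real obstacle here; the statement is essentially a matter of exhibiting the right small example in each of the two cases.
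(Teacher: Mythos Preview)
Your proposal is correct and follows essentially the same approach as the paper: construct a path $W$ that is not a short-distance path but decomposes into two subpaths $W_1,W_2$ that are, so that $p(W)=\infty$ while $p(W_1)+p(W_2)=2P_S<\infty$. The paper's proof is in fact terser than yours---it simply asserts the existence of such a PTN without spelling out the case distinction on which bound is finite---so your explicit constructions are a welcome elaboration rather than a deviation.
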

\begin{proof}
	Let $P_S$ be an arbitrary short-distance price.
	Consider a PTN with a path $W$ with $s(W) > \smax$ or $l(W) > \lmax$ that can be decomposed into two short-distance paths~$W_1$ and $W_2$, i.e., $s(W_i)\leq \smax$ and $l(W_i) \leq \lmax$ for all $i \in \{1,2\}$.
	Then the induced short-distance tariff $p$ does not satisfy the no-stopover property since $p(W) = \infty$, but $p(W_1)+p(W_2) = 2 P_S < \infty$.
\end{proof}

\begin{theorem} \label{sd Thm Eig2}
	If $p$ is a short-distance tariff, then the no-elongation property is fulfilled.
\end{theorem}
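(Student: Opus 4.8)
The statement to prove is that a short-distance tariff satisfies the no-elongation property, i.e., $p([x_1,x_{n-1}]) \leq p([x_1,x_n])$ for every path $(x_1,\ldots,x_n)\in\W$ with $n\geq 2$. The plan is to argue by a trivial case distinction on the price of the longer path $W \defeq [x_1,x_n]$. Since $p$ takes only the two values $P_S$ and $\infty$, the inequality can only fail if $p([x_1,x_{n-1}]) = \infty$ while $p([x_1,x_n]) = P_S$, so the whole proof reduces to ruling out this one configuration.

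First I would recall that for a subpath $W' = [x_1,x_{n-1}]$ of $W$, we have $s(W') = s(W) - 1 \leq s(W)$ and $l(W') \leq l(W)$, because $W'$ uses one fewer edge than $W$ and edge lengths are nonnegative. Hence, if $W$ is a short-distance path — that is, $s(W) \leq \smax$ and $l(W) \leq \lmax$ — then automatically $s(W') \leq \smax$ and $l(W') \leq \lmax$, so $W'$ is a short-distance path as well; in this case $p(W') = P_S = p(W)$ and the inequality holds with equality. On the other hand, if $W$ is not a short-distance path, then $p(W) = \infty$, and $p(W') \leq \infty = p(W)$ trivially, regardless of whether $W'$ is short-distance. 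Combining the two cases gives $p([x_1,x_{n-1}]) \leq p([x_1,x_n])$, which is exactly the no-elongation property.

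I do not expect any real obstacle here: the only thing that needs a word of care is the monotonicity of $s$ and $l$ under taking initial subpaths, which is immediate from the definitions given in the excerpt ($s(W)$ is the number of edges, $l(W)$ the total physical length, and both are built from nonnegative summands over the edges of $W$). One could also phrase the proof slightly more generally — an arbitrary subpath of a short-distance path is again a short-distance path — which would simultaneously re-derive the inductive chain $p([x_1,x_n]) \geq p([x_1,x_{n-1}]) \geq \cdots$ used in the general discussion of the no-elongation property earlier in the paper, but for the statement as given the single-step argument above suffices.
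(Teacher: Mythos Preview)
Your proof is correct and follows essentially the same approach as the paper: both argue that if $W=[x_1,x_n]$ is a short-distance path then so is $[x_1,x_{n-1}]$ by monotonicity of $s$ and $l$, which forces $p([x_1,x_{n-1}])\leq p([x_1,x_n])$. The paper leaves the case $p(W)=\infty$ implicit, whereas you spell it out explicitly, but otherwise the arguments coincide.
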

\begin{proof}
	Let a path $W=(x_1,\ldots,x_n) \in \W$ with $n \geq 2$ be given.
	If $W$ is not a short-distance path, then $p([x_1,x_{n-1}]) \leq p(W)$.
	If $W$ is a short-distance path, then its subpath $[x_1,x_{n-1}]$ is also a short-distance path because it holds that ${s([x_1,x_{n-1}]) < s(W)}$ and $l([x_1,x_{n-1}]) < l(W)$.
	Hence, we have that $p([x_1,x_{n-1}]) \leq p(W)$. 
\end{proof}

In order to compute a cheapest path, we need to identify if a short-distance path between two stations exists, i.e., we look for a \emph{shortest weight constrained path}.
This problem, also known as \emph{restricted shortest path} has been studied extensively, e.g., \cite{Jok66,Jaf84,Has92,LoRaz01,DuBo03}.
It is known to be NP-complete \cite{GaJo}.

However, in our special case, the weight represents the number of stations on a path, which coincides with the number of edges.
Hence, we have a unit weight for all edges $e \in E$.
This is the crucial factor which allows a polynomial time algorithm for checking if there is a path which satisfies the requirements of the short-distance tariff by a modification of the Bellman-Ford algorithm, as mentioned, e.g., in \cite{ArkMitPiat}.
Iteratively, we calculate the distance from the start node to every other node in the graph using at most $s \leq \smax$ edges. Then we can check if there is a path with at most $\smax$ edges that has a length of at most $\lmax$.
For the sake of completeness, the \hyperref[Alg sd]{algorithm} can be found in the \hyperref[appendix]{appendix}.
\medskip

We finally look at the combination of the basic zone tariff with a short-distance tariff. 
The idea is to make traveling on short routes less expensive.
In combination with a basic zone tariff, the short-distance tariff is especially relevant for short paths which cross zone borders.

For a given PTN, let a basic zone tariff $p_1$ with an increasing price function $P$ and a short-distance tariff~$p_2$ with a price $P_S \in \R_{\geq 0}$, and upper bounds $\smax \in \N_{\geq 1} \cup \{\infty\}$ and ${\lmax \in \R_{> 0}\cup \{\infty\}}$ be given.
With these data, we construct the \emph{combined fare structure of a basic zone tariff and a short-distance tariff (ZSD)} $p$ as in Definition~\ref{combined Definition}, given by ${p(W) = \min \{p_1(W), p_2(W)\}}$ for all $W \in \W$.
In this case $p$ resolves to
\begin{equation} \label{formula 1}
	p(W) = \begin{cases}
		\min \{ P_S, P(z(W))\} 			& \text{if } W \text{ is a short-distance path,}\\
		P(z(W))							& \text{otherwise.}
	\end{cases}
\end{equation} 

For a ZSD, it is important how to choose $P_S$.
If $P(k) \leq P_S$ for all $k \in \N_{\geq 1}$, then the short-distance tariff would never be applied and the ZSD is a basic zone tariff.
Hence, from now on, we assume that there is some $K \in \N_{\geq 1}$ such that $P_S<P(k)$ for all $k \geq K$.
If $P_S < P(1)$, then the short-distance tariff can be beneficial when zone borders are crossed, but also for short trips within a single zone.
On the other hand, if $P_S > P(1)$, it will never be used within a single zone, but only if sufficiently many zones are traversed on a path.
Therefore, by choosing the price $P_S$, the options for use of the short-distance tariff can be limited further.
This can easily be seen by restating formula~\eqref{formula 1} for~$p$ in the following way:

Let $K \in \N$ such that $P(K) \leq P_S < P(K+1)$ where we define $P(0) \defeq 0$ in order to formally cover the case $P_S \leq P(1)$. 
Then
\begin{equation}\label{formula 2}
	p(W) = \begin{cases}
		P_S			& \text{if } z(W)>K \text{ and } s(W) \leq \smax \text{ and } l(W)\leq \lmax,\\
		P(z(W)) 	& \text{if } z(W) \leq K \text{ or } s(W) > \smax \text{ or } l(W)> \lmax
	\end{cases}
\end{equation}
is a reformulation of~\eqref{formula 1}.
In particular, it demonstrates that a path needs to traverse more than $K$ zones in order that it is beneficial to use the short-distance tariff.

\begin{theorem}\label{zone short Thm Eig1 neu}
	Let bounds $\smax$ and $\lmax$, an increasing price function $P$ and a price $P_S$ be given, and let~$K$ be as in \eqref{formula 2}.
	All induced ZSDs satisfy the no-stopover property if and only if the following conditions hold:
	\begin{enumerate}
		\item \label{condition 1} $P(k) \leq P(i) + P(k-i+1)$ for all $k \geq 1$ and $i \in \{1,\ldots, k\}$,
		\item \label{condition 2} $P(k) \leq 2 P_S$ for all $k \geq 2K+1$,
		\item \label{condition 3} $P(k) \leq P(i) + P_S$ for all $k \geq K+1$ and $i \in \{ 1, \ldots, k-K \}$,
		\item \label{condition 4} if $\smax >1$, then $P_S \leq P(i) + P(k-i+1)$ for all $k\in \{K+1, \ldots, 2K-1\}$ and for all ${i \in \{k-K+1,\ldots, K\}}$ (note that the condition is empty for $K\in \{0,1\}$). 
	\end{enumerate}	
\end{theorem}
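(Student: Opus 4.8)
The proof is a characterization "if and only if", so I would split it into the two directions, and within the "if" direction I would carefully enumerate the possible shapes of a compound ticket $([x_1,x_i],[x_i,x_n])$ according to which of the two component tickets uses the zone price and which uses the short-distance price. The starting point is formula~\eqref{formula 2}: for a path $W$, the price is $P_S$ exactly when $z(W) > K$ \emph{and} $W$ is a short-distance path, and $P(z(W))$ otherwise. Given a path $W$ with a compound ticket $(W_1,W_2)$, write $k \defeq z(W)$, $i \defeq z(W_1)$, so that $z(W_2) = k-i+1$ (the stopover zone is counted in both), and note $s(W) = s(W_1)+s(W_2)$ and $l(W) = l(W_1)+l(W_2)$, so if $W$ is a short-distance path then so are $W_1$ and $W_2$. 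The four conditions in the theorem are then exactly the inequalities needed to cover the following cases: (a) both $W_1,W_2$ are priced by $P$ — this needs condition~\ref{condition 1} and is just Theorem~\ref{zone1}; (b) one of $W_1,W_2$ priced by $P_S$, the other by $P$ — this needs condition~\ref{condition 3}, where the restriction $i \le k-K$ comes from the fact that the $P_S$-priced subpath has $z > K$; (c) both priced by $P_S$ — this needs condition~\ref{condition 2}, with $k \ge 2K+1$ because each half visits more than $K$ zones and they share one; (d) the reverse direction, where $p(W) = P_S$ because $W$ itself is short-distance with $z(W) > K$, but the compound ticket splits it so that \emph{both} halves have $z \le K$ and hence are priced by $P$ — this is exactly what condition~\ref{condition 4} prevents, and the hypothesis $\smax > 1$ is needed because otherwise a short-distance path has at most one edge and cannot be split at an interior station.

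For the "only if" direction I would, for each of the four conditions, assume it fails and exhibit a PTN (in the spirit of Figure~\ref{Figure zone Thm Eig1}, a path through consecutively-labeled zones, possibly with the short-distance bounds $\smax,\lmax$ chosen large enough to make the relevant subpaths short-distance paths) on which the induced ZSD violates the no-stopover property. Concretely: if condition~\ref{condition 1} fails, reuse the basic zone tariff construction from Theorem~\ref{zone1}, choosing $P_S$ large (or $\smax$ tiny) so the short-distance option is inert; if condition~\ref{condition 2} fails for some $k \ge 2K+1$, build a path with $z(W) = k$ that splits into two pieces each with more than $K$ zones, both short-distance, so that $p(W) = \min\{P_S,P(k)\} $ compared against $2P_S$; if condition~\ref{condition 3} fails, build a path splitting into one $P$-priced and one $P_S$-priced half; if condition~\ref{condition 4} fails (so $K \ge 2$ and $\smax > 1$), build a short-distance path $W$ with $K < z(W) = k \le 2K-1$, hence $p(W) = \min\{P_S, P(k)\}$, wait — here one must be slightly careful, since for $k$ in that range $P(k)$ could be below $P_S$; but condition~\ref{condition 1} would already force $P(k) \le \dots$, and actually the intended comparison is $P_S$ (the elongation-free price of $W$ as a short-distance path) against $P(i)+P(k-i+1)$ where the split point makes both halves have $\le K$ zones. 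I would reconcile this by noting that under condition~\ref{condition 1} we may assume $p(W)=P_S$ in the offending instance, or by choosing the instance so that $z(W) > K$ guarantees $P(z(W)) \ge P_S$ is the only subtlety — I would double-check the boundary.

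**Where the difficulty lies.** The routine part is the case analysis in the "if" direction; the delicate part is getting the index ranges in conditions~\ref{condition 3} and~\ref{condition 4} to line up exactly with the case split, in particular understanding why a $P_S$-priced half forces $z \ge K+1$ (so $i \le k-K$ in condition~\ref{condition 3}) and why, in the split that triggers condition~\ref{condition 4}, \emph{both} halves must have at most $K$ zones (giving $i \le K$ and $k-i+1 \le K$, i.e.\ $i \ge k-K+1$, hence the stated range $k-K+1 \le i \le K$, which is nonempty precisely when $k \le 2K-1$). The other genuine subtlety is the role of the hypothesis $\smax > 1$: without it, a short-distance path has a single edge, cannot be decomposed at an interior station, so case (d) never arises and condition~\ref{condition 4} is vacuous — I would state this explicitly. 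Finally, one must remember that "for all induced ZSDs" means we get to choose the PTN in the "only if" direction but must handle every PTN in the "if" direction, and that multiple stopovers reduce to single stopovers exactly as in the remark following Definition~\ref{no-stopover}, so it suffices to treat compound tickets with one split point.
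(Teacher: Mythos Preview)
Your approach matches the paper's proof: the same case analysis for the ``if'' direction (your cases (a)--(d) are exactly the paper's cases, organized by which of $W,W_1,W_2$ receive the short-distance price), and explicit counterexample PTNs for each failed condition in the ``only if'' direction. Two minor corrections: in the ``only if'' direction you cannot choose $P_S$ or $\smax$---they are fixed by hypothesis---so to make the short-distance option inert for condition~\ref{condition 1} the paper instead takes edge lengths $>\lmax$ (and when $\lmax=\infty$, inserts enough intermediate stations to exceed $\smax$); and your worry about condition~\ref{condition 4} dissolves once you recall that $k>K$ forces $P(k)\ge P(K{+}1)>P_S$ by the very definition of $K$, so $p(W)=P_S$ automatically in that case.
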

\begin{proof}
	In this proof, we make use of the representation of the fare structure $p$ as in~\eqref{formula 2}.
	First, we show that there is a PTN for which the induced ZSD does not satisfy the no-stopover property if one of the conditions is not fulfilled.
	For the following examples, we assume that $\lmax<\infty$.
	In each case, we consider the situation depicted in Figure~\ref{Figure zone short Thm Eig1 neu} (again omitting virtual nodes which in particular do not count as stations for the short-distance tariff) with specific edge lengths so that the no-stopover property is not satisfied.
	However, the examples can be adapted to the case that $\lmax=\infty$, i.e., $\smax<\infty$, by using unit lengths $l(x_1,x_2)=l(x_2,x_3)$ and adding sufficiently many stations along both edges.
	In the following, we show that the compound ticket $((x_1,x_2),(x_2,x_3))$ is cheaper than the standard ticket of the path $(x_1,x_2,x_3)$.
	\begin{enumerate}
		\item Assume that condition~\ref{condition 1} is not satisfied for some $k$ and $i\leq k$, which means that we have ${P(k)>P(i)+P(k-i+1)}$.
		Let $l(x_1,x_2) = l(x_2,x_3) = \lmax +1$.
		Then the basic zone tariff is used for all three paths.
		
		\item Assume that condition~\ref{condition 2} is not satisfied for some $k\geq 2K+1$, which means that ${P(k) > 2P_S}$.
		Let ${l(x_1,x_2) = l(x_2,x_3) = \lmax}$, yielding ${l(x_1,x_2,x_3)=2\lmax >\lmax}$, and let $i=K+1$.
		Then it holds that $z((x_1,x_2))= K+1$ and 
		${z((x_2,x_3)) = k-i+1 \geq (2K+1)-(K+1)+1 = K+1}$.
		Also, ${s((x_1,x_2))=s((x_2,x_3))=1\leq \smax}$.
		Hence, the zone price $P(k)$ is used for $(x_1,x_2,x_3)$, but the short-distance price $P_S$ is applied to both subpaths.
		
		\item Assume that condition~\ref{condition 3} is not satisfied for some $k\geq K+1$ and $i\leq k-K$, i.e., ${P(k) > P(i)+P_S}$.
		Let ${l(x_1,x_2) = \lmax+1}$ and $l(x_2,x_3) = \lmax$.
		Then we have 
		$z((x_2,x_3)) = k-i+1 \geq k-(k-K)+1 = K+1$
		and $s((x_2,x_3))=1 \leq \smax$.
		Hence, the zone prices $P(k)$ and $P(i)$ are used for $(x_1,x_2,x_3)$ and $(x_1,x_2)$, but the short-distance price $P_S$ is applied to $(x_2,x_3)$.
		
		\item Assume that condition~\ref{condition 4} is not satisfied for some $k \in \{K+1, \ldots, 2K-1\}$ and ${i \in \{k-K+1,\ldots, K\}}$, i.e., $P_S > P(i)+P(k-i+1)$.
		Let $l(x_1,x_2) = l(x_2,x_3) = \frac{\lmax}{2}$, which yields $l((x_1,x_2,x_3)) \leq \lmax$.
		Also, $s((x_1,x_2,x_3))=2\leq \smax$ by assumption.
		We have $z((x_1,x_2))= i \leq K$ and $z((x_2,x_3)) = k-i+1 \leq k-(k-K+1)+1=K$.
		Therefore, the short-distance price $P_S$ is used for the path $(x_1,x_2,x_3)$ because ${z((x_1,x_2,x_3))=k>K}$ by choice of $k$, but the zone prices $P(i)$ and $P(k-i+1)$ are applied to the subpaths since they traverse at most $K$ zones.
	\end{enumerate}
	\begin{figure}[t]
		\centering
		\includegraphics[scale=1]{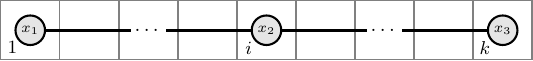}
		\caption{PTN with zones for Theorem~\ref{zone short Thm Eig1 neu}.}
		\label{Figure zone short Thm Eig1 neu}	
	\end{figure}
	\medskip
	
	Now we suppose that all the conditions hold and consider a ZSD $p$.
	Let $W \in \W$ be a path with a corresponding compound ticket $(W_1, W_2)$. We set $k \defeq z(W)$, $k_1 \defeq z(W_1)$ and $k_2 \defeq z(W_2)$, so $k_1+k_2=k+1$.
	We make a case distinction whether a path is a short-distance path and whether it is assigned the short-distance price.
	\begin{enumerate}
		\item \label{a}If neither $W$ nor $W_1, W_2$ are short-distance paths, then the basic zone tariff is applied. The no-stopover property holds for $W$ by condition~\ref{condition 1}.
		
		\item If $W_1$ is a short-distance path, but $W_2$ and $W$ are not, then we distinguish two cases:
		\begin{itemize}
			\item $k_1\leq K$: Then $W_1$ is assigned the price of the basic zone tariff, and the no-stopover property holds for $W$ by condition~\ref{condition 1}.
			
			\item $k_1\geq K+1$: The short-distance tariff is applied for $W_1$. Also $k\geq k_1 \geq K+1$ and ${k_2 = k-k_1+1 \leq k -(K+1)+1 = k-K}$. Thus, the no-stopover property holds for $W$ by condition~\ref{condition 3} with $i = k_2$.
		\end{itemize}
	\end{enumerate}
	For the cases where $W_1$ and $W_2$ or all paths are short-distance paths, we proceed analogously. 
	These remaining cases can be found in \cite{Urban20}.
	This shows that the no-stopover property is satisfied.
\end{proof}

\begin{corollary} \label{zone short lemma Eig2}
	If $p$ is a ZSD, then the no-elongation property is satisfied.
\end{corollary}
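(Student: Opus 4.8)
The plan is to observe that this corollary is an immediate consequence of the three earlier results about the no-elongation property and to combine them. Recall that a ZSD $p$ is by construction the combined fare system $p(W) = \min\{p_1(W), p_2(W)\}$ of a basic zone tariff $p_1$ with an increasing price function $P$ and a short-distance tariff $p_2$. So the whole argument reduces to checking that each of the two component fare systems has the no-elongation property and then invoking the stability of this property under combination.

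First I would argue that $p_1$ satisfies the no-elongation property: since $P$ is assumed to be increasing, Theorem~\ref{zone-elongation} applies and tells us that every basic zone tariff w.r.t.\ $P$ — in particular $p_1$ — satisfies the no-elongation property. Second, $p_2$ is a short-distance tariff, and Theorem~\ref{sd Thm Eig2} states that every short-distance tariff satisfies the no-elongation property, so $p_2$ does too. Finally, I would apply Theorem~\ref{combined Thm Eig2}, which says that the combined fare system of two fare systems each satisfying the no-elongation property again satisfies it; applied to $p_1$ and $p_2$ this yields that $p$ satisfies the no-elongation property, which is the claim.

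There is no real obstacle here: all the substantive work was done in Theorems~\ref{zone-elongation}, \ref{sd Thm Eig2}, and \ref{combined Thm Eig2}, and this corollary is just their composition. The only thing to be careful about is that the definition of a ZSD indeed presents $p$ literally as a combined fare system in the sense of Definition~\ref{combined Definition}, so that Theorem~\ref{combined Thm Eig2} is directly applicable — which it is, by the way $p$ was introduced in Section~\ref{section-zone-short}. Hence no case distinction on whether $W$ is a short-distance path (as in the no-stopover analysis of Theorem~\ref{zone short Thm Eig1 neu}) is needed.
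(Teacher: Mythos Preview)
Your proposal is correct and follows exactly the same argument as the paper: invoke Theorems~\ref{zone-elongation} and~\ref{sd Thm Eig2} for the two component fare systems and then apply Theorem~\ref{combined Thm Eig2} to their combination. The paper's proof is just a one-sentence version of what you wrote.
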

\begin{proof}
	Because basic zone tariffs with an increasing price function and short-distance tariffs satisfy the no-elongation property by Theorems~\ref{zone-elongation} and~\ref{sd Thm Eig2}, it is also satisfied for a ZSD by Theorem~\ref{combined Thm Eig2}.
\end{proof}

For a ZSD, a cheapest path is either a cheapest path as in the basic zone tariff or a short-distance path.
In order to find one, we detail Algorithm~\ref{algo-combined} and apply the algorithms for the short-distance tariff and the one for the basic zone tariff, and then we decide which path is cheaper.

\begin{algorithm}
	\caption{ZSD: finding a cheapest path.}
	\label{Alg zone short}
	\SetKwInOut{Input}{Input}
	\SetKwInOut{Output}{Output}
	\SetKwFunction{sdp}{SDP}
	\SetKw{AND}{and}
	
	\Input{PTN $(V,E)$, finite upper bounds $\smax$, $\lmax$, a price function $P\!$, a short-distance price $P_S$, two stations $x,y \in V$}
	\Output{$x$-$y$-path $W$}
	
	Let $W_1$ be the result returned by Algorithm~\ref{Alg sd} for finding a short-distance path.\\
	Let $W_2$ be the result returned by Algorithm~\ref{Alg zone} for finding a cheapest path regarding the basic zone tariff.\\
	\eIf{$W_1$ $\neq$ None \AND $P_S < P(z(W_2))$}{
		\Return  $W_1$
	}{
		\Return $W_2$
	}
\end{algorithm}

\begin{corollary}
	Let $p$ be a ZSD.
	\begin{itemize}
		\item Algorithm~\ref{Alg zone short} computes a cheapest path and hence a cheapest standard ticket in polynomial time.
		\item If $p$ satisfies the no-stopover property (Theorem~\ref{zone short Thm Eig1 neu}), then Algorithm~\ref{Alg zone short} yields a cheapest ticket in polynomial time.
	\end{itemize}
\end{corollary}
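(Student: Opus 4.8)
The plan is to derive the corollary entirely from the decomposition identity~\eqref{eq-comb-cheapest} for combined fare systems (which is what makes the scheme of Algorithm~\ref{algo-combined}, and hence its specialization Algorithm~\ref{Alg zone short}, correct), the correctness of the two subroutines that Algorithm~\ref{Alg zone short} calls, and --- for the second bullet --- Theorem~\ref{Prelim-thm} together with Corollary~\ref{zone short lemma Eig2}.

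First I would identify the quantities the algorithm actually works with. The path $W_2$ is returned by Algorithm~\ref{Alg zone}, so by Corollary~\ref{zone-cor-alg} it is a cheapest path for the basic zone tariff $p_1$ and $p_1(W_2) = \min_{W}p_1(W) = P(z(W_2))$. The path $W_1$ is returned by Algorithm~\ref{Alg sd}: if an $x$-$y$ short-distance path exists, then $W_1$ is one and, by Lemma~\ref{sd Lemma Eig3}, it is a cheapest path for the short-distance tariff $p_2$ with $p_2(W_1)=P_S$; if none exists, then $\min_W p_2(W)=\infty$. Substituting this into~\eqref{eq-comb-cheapest} shows that the cheapest $p$-price between $x$ and $y$ equals $\min\{P(z(W_2)),P_S\}$, with the convention that the $P_S$-term is dropped when no short-distance path exists.

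Next I would match the two branches of the \textbf{if}-statement against this optimum. If $W_1\neq\text{None}$ and $P_S < P(z(W_2))$, the algorithm returns $W_1$; since $W_1$ is a short-distance path we have $p(W_1)\le p_2(W_1)=P_S$, while $p(W_1)\ge \min_W p(W)=P_S$ by global optimality, so $p(W_1)=P_S$ is optimal. Otherwise the algorithm returns $W_2$; then either no short-distance path exists or $P_S\ge P(z(W_2))$, so $\min_W p(W)=P(z(W_2))$, and since $p(W_2)\le p_1(W_2)=P(z(W_2))$ we again obtain $p(W_2)=\min_W p(W)$. Hence the returned path is a cheapest path, and the standard ticket on it is a cheapest standard ticket. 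For the running time, Algorithm~\ref{Alg sd} runs in polynomial time (the Bellman--Ford variant that exploits the unit edge weights), Algorithm~\ref{Alg zone} runs in polynomial time by Corollary~\ref{zone-cor-alg}, and evaluating the test $P_S<P(z(W_2))$ costs only one pass over $W_2$; so the overall procedure is polynomial.

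For the second bullet, a ZSD always satisfies the no-elongation property by Corollary~\ref{zone short lemma Eig2}. If it additionally satisfies the no-stopover property --- which by Theorem~\ref{zone short Thm Eig1 neu} is a condition purely on $P$, $P_S$, $\smax$, $\lmax$ --- then Theorem~\ref{Prelim-thm} applies, so the standard ticket on a cheapest path is already a cheapest ticket; together with the first bullet this yields a cheapest ticket in polynomial time. I do not anticipate a genuine obstacle here; the one point that needs care is that the algorithm's decision rule compares $P_S$ with $P(z(W_2))=p_1(W_2)$ rather than with the true combined prices $p(W_1),p(W_2)$, so the argument has to route through the global minimum $\min_W p(W)$ --- using $p(W_1)\le p_2(W_1)$ and $p(W_2)\le p_1(W_2)$ --- instead of attempting to compare $P(z(W_1))$ with $P_S$ directly, which would not in general be informative.
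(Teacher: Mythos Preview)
Your proposal is correct and follows the same route as the paper's proof, which simply cites the correctness and runtime of Algorithms~\ref{algo-combined}, \ref{Alg sd}, and \ref{Alg zone}, together with Corollary~\ref{zone short lemma Eig2} and Theorem~\ref{Prelim-thm}. You have merely unpacked that terse citation into an explicit case analysis of the \textbf{if}-branches via~\eqref{eq-comb-cheapest}, which is exactly the content behind the paper's one-line appeal to Algorithm~\ref{algo-combined}.
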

\begin{proof}
	The claim follows from the correctness and runtime of Algorithms~\ref{algo-combined}, \ref{Alg sd}, and \ref{Alg zone}.
	The no-elongation property is satisfied by Corollary~\ref{zone short lemma Eig2}.
	If also the no-stopover property holds, then a cheapest standard ticket is also a cheapest ticket by Theorem~\ref{Prelim-thm}.
\end{proof}

\section{Conclusion}
\label{sec-conclusion}
In this paper, we have provided models for many common fare structures, studied their properties and provided polynomial algorithms for finding cheapest standard tickets, all of them based on shortest paths.
We also investigated in which cases cheapest standard tickets provide cheapest tickets, i.e., in which cases it is not possible to benefit from misusing the bought tickets. 
To this end, we defined and analyzed the no-stopover and the no-elongation property and gave sufficient conditions for them to hold.

As a further step, one can investigate speed-up techniques for shortest paths (e.g., in \cite{WagnerWilhalm07,bdgmpsww-rptn-14}) in order to make the computation of cheapest paths more efficient and to evaluate these experimentally.
Here it is particularly interesting to use the embedding of the PTN in the plane, bidirectional search and the structure of the zones (for zone tariffs).
The next step is to include the ticket price as one criterion besides other criteria that passengers might apply to choose their routes.
The most important criterion for a passenger probably is the travel time, see, e.g., \cite{BornHoppKarb16}, but also, for example, the robustness against delays of the path may be important as shown in \cite{BGMHSS11,BGMHSS12}. 
Considering several criteria can be done efficiently if ticket prices can be computed by common shortest path algorithms in the same network as the travel time, but with adapted edge weights, as in \cite{GuMueSchn}.
Aside from that, other ticket options like group or season tickets are interesting for further research.
Also planning fare structures under different criteria (such as fairness, income, low transition costs) is an interesting topic for further research as well as the integration of planning fare structures and network design.
For example, there is a strong relation between line planning and the design of zone tariffs since the number of traversed zones on a path depends on the line plan.

We finally plan to include the ticket prices in route choice models and integrate them into planning lines and timetables along the lines of~\cite{SchiSch18}, but with underlying realistic passenger behavior.

\bibliographystyle{amsalpha} 
\bibliography{waben}

\newpage
\appendix
\section*{Appendix. Algorithm for Section~\ref{section-zone-short}}
\label{appendix}
\begin{algorithm}[h]
	\caption{Short-distance tariff: finding a short-distance path.}
	\label{Alg sd}
	\SetKwInOut{Input}{Input}
	\SetKwInOut{Output}{Output}
	\SetKwFunction{sdp}{SDP}
	\SetKw{AND}{and}
	
	\Input{PTN $(V,E)$, upper bounds $\smax$, $\lmax$, two stations $x,y \in V$}
	\Output{shortest $x$-$y$-path $W$ with $s(W)\leq \smax$ and $l(W)\leq \lmax$ if one exists}
	
	\tcp{Initialization}
	$\smax \defeq \min \{\smax, \vert V \vert-1\}$,
	$\lmax \defeq \min \{\lmax, \max_{e \in E}l(e)\cdot \vert V \vert\}$\\
	\For{all $v \in V$}{
		$d_0(v) \defeq \infty$\\
		$\pi_0(v) \defeq \text{None}$
	}
	$d_0(x) \defeq 0$
	
	\vspace{\baselineskip}
	\tcp{Compute distances and predecessors (Bellman-Ford)}
	\For{$s = 1, \ldots, \smax$}{
		\For{all $v \in V$}{
			$d_{s}(v) \defeq d_{s-1}(v)$\\
			$\pi_s(v)\defeq \pi_{s-1}(v)$\\
			\For{all edges $(w,v)\in E$}{
				\If{$d_{s}(v) > d_{s-1}(w)+ l(w,v)$}{
					$d_{s}(v) \defeq d_{s-1}(w)+ l(w,v)$\\
					$\pi_s(v) \defeq w$
				}
			}	
		}	
	}	
	\vspace{\baselineskip}
	\tcp{Check if a feasible $x$-$y$-path exists and compute it if necessary}
	\eIf{$d_{\smax}(y) \leq \lmax$}{
		\tcp{Determine the path $W$ from $x$ to $y$ by backtracking the predecessors}
		$W' = [y]$ \tcp{list of all predecessors starting from $y$}
		current $\defeq$ $y$\\
		$s \defeq \smax$\\
		\While{current $\neq$ $x$}{
			current $\defeq$ $\pi_s(\text{current})$\\
			$W'$.append(current) \tcp{add \textnormal{current} to the end of $W'$}
			$s \defeq s-1$
		}
		Set $W \defeq (W')^{-1}$\\
		\Return $W$
	}{
		\Return None
	}
\end{algorithm}

\end{document}